\newcommand{\ol}[1]{\overline{#1}}
\newcommand{\F}{\mathcal F}
\newcommand{\I}{\mathcal I}
\newcommand{\R}{\mathbb{R}}
\newcommand{\Z}{\mathbb{Z}}
\newcommand{\N}{\mathbb{N}}
\renewcommand{\P}{\mathbb{P}}
\newcommand{\PP}{\mathbb{P}}
\newcommand{\Sph}{\mathbb{S}}
\newcommand{\HV}{{\overline H}}
\newcommand{\M}{\R^d}
\newcommand{\TM}{\M\times\M}
\newcommand{\cyl}{(0,+\infty)\times\M}
\newcommand{\ccyl}{[0,+\infty)\times\M}
\newcommand{\Tcyl}{(0,T)\times \M}
\newcommand{\cTcyl}{[0,T)\times\M}
\newcommand{\ccTcyl}{[0,T]\times\M}
\newcommand{\coTcyl}{[0,T)\times\M}
\newcommand{\eps}{\varepsilon}
\renewcommand{\epsilon}{\varepsilon}
\renewcommand{\le}{\leqslant}
\renewcommand{\leq}{\leqslant}
\renewcommand{\ge}{\geqslant}
\renewcommand{\geq}{\geqslant}
\newcommand{\CC}{\D C}
\newcommand{\Ham}{\mathscr{H}}
\newcommand{\Hamdg}{\mathscr{H}_{dG}}
\newcommand{\Lip}{\D{Lip}}
\newcommand{\BUC}{\D{BUC}}
\newcommand{\UC}{\D{UC}}
\renewcommand{\AA}{\mathbb{A}}
\newcommand{\BB}{\mathbb{B}}
\newcommand{\hh}{{h}}
\newcommand{\indep}{\perp\!\!\!\!\perp} 
\newcommand{\D}[1]{\mbox{\rm #1}}
\newcommand{\ucv}{\rightrightarrows}
\newcommand{\bluee}{}
\newcommand{\redd}{}
\newcommand{\EE}{\mathbb{E}}
\newcommand{\RR}{\mathbb{R}}
\newcommand{\NN}{\mathbb{N}}
\newcommand{\m}{\mathbb}
\newcommand{\cond}{\;\middle\vert\;}
\theoremstyle{plain} 
\newtheorem{theorem}{\sc Theorem}[section]
\newtheorem{cor}[theorem]{\sc Corollary}
\newtheorem{lemma}[theorem]{\sc Lemma}
\newtheorem{proposition}[theorem]{\sc Proposition}
\newtheorem{prop}[theorem]{\sc Proposition}
\theoremstyle{definition} 
\newtheorem{definition}[theorem]{\sc Definition}
\theoremstyle{remark} 
\newtheorem{remark}[theorem]{\sc Remark}
\def\AND{\text{ and }}
\def\FORALL{\text{ for all }}
\begin{document}

%

\title[Stochastic homogenization of HJ equations:  
a differential game approach]{Stochastic homogenization of HJ equations:\\ 
a differential game approach}
\author{Andrea Davini \and Raimundo Saona \and Bruno Ziliotto}
\date{\today}

\maketitle

\begin{abstract}
		We prove stochastic homogenization for a class of non-convex and non-coercive first-order Hamilton-Jacobi equations in a finite-range-dependence environment {\redd for Hamiltonians that can be expressed by a max-min formula. Exploiting the representation of solutions as value functions of differential games, we develop a game-theoretic approach to homogenization. We furthermore extend this result to a class of Lipschitz Hamiltonians that need not admit a global max-min representation. Our methods allow us to get a quantitative convergence rate for solutions with linear initial data toward the corresponding ones of the effective limit problem. 
	
}	
\end{abstract}

\section{Introduction}

In this paper we study the asymptotic behavior, as $\eps \to 0^+$, of solutions to a stochastic Hamilton--Jacobi (HJ) equation of the form
\begin{equation}
\label{eq eps hj}
\tag{HJ$_\eps$}
\partial_t u^\eps + H\!\left(\frac{x}{\eps}, D_x u^\eps, \omega\right)=0
\quad \text{in } \Tcyl,
\end{equation}
for each fixed $T>0$, where $H \colon \TM \times \Omega \to \R$ is a Lipschitz Hamiltonian admitting a max-min representation. The dependence on the random environment $(\Omega, \F, \P)$ enters through the Hamiltonian $H(x,p,\omega)$, whose law is assumed to be \emph{stationary}, i.e., invariant under spatial translations, and \emph{ergodic}, i.e., any translation-invariant event has probability either $0$ or $1$.
{\redd
Under the additional assumptions that the random variables $\big(H(\cdot,p,\cdot)\big)_{p \in \M}$ satisfy a finite-range dependence condition and that the underlying dynamics is oriented, we prove homogenization for \eqref{eq eps hj} (Theorem~\ref{thm:genhom}) and obtain a convergence rate, for solutions with linear initial data, toward the corresponding solutions of the effective limit problem (Theorem~\ref{Result: Concentration property}). This latter, stronger result is stable under local uniform convergence of suitable sequences of Hamiltonians of the above type. As a consequence, homogenization extends to the limiting Hamiltonians (Corollary~\ref{cor:quantitative estimate}), which in general cannot be expressed in max-min form.

A second extension in this direction is provided by Theorem~\ref{thm:genhom2}, where we prove analogous results for a class of Lipschitz Hamiltonians that need not admit a global max-min representation. Using an argument from~\cite[Section~5]{EvSo84}, these Hamiltonians can, however, be written in max-min form \emph{locally in $p$}; this suffices for our proof strategy, which is tailored to this extension. The full set of assumptions and the precise statements of our homogenization results are presented in Section~\ref{sec:assumptions}. We emphasize that the Hamiltonians considered here are noncoercive and nonconvex in $p$.
\smallskip
}

The coercivity of $H$ in the momentum is a condition often assumed in the homogenization theory of first-order HJ equations.
Its role is to provide uniform $L^\infty$ bounds on the derivatives of solutions to \eqref{eq eps hj} and to an associated “cell” problem.
The first homogenization results for equations of the form \eqref{eq eps hj} with coercive Hamiltonians were established in the periodic setting in the pioneering work~\cite{LPV} and later extended to the almost periodic case in~\cite{I_almostperiodic}.
The generalization of these results to the stationary ergodic setting was obtained in \cite{Sou99, RT00} under the additional assumption that the Hamiltonian is convex in $p$.
By exploiting the metric character of first-order HJ equations, homogenization was extended to quasiconvex Hamiltonians in \cite{DS09, AS13}.
The question of whether homogenization holds in the stationary ergodic setting for coercive Hamiltonians that are nonconvex in the momentum remained open for about fifteen years, until the third author provided in~\cite{Zil} the first counterexample to homogenization in dimensions $d>1$.
Feldman and Souganidis generalized this example and showed in~\cite{FS17} that homogenization can fail for Hamiltonians of the form $H(x,p,\omega)\coloneqq G(p)+V(x,\omega)$ whenever $G$ has a strict saddle point.
This has shut the door to the possibility of having a general qualitative homogenization theory in the stationary ergodic setting in dimensions $d\geqslant 2$ -- at least without imposing further mixing conditions on the stochastic environment -- and stands in sharp contrast to the periodic case, where qualitative homogenization is known to hold for Hamiltonians solely coercive in the momentum, regardless of convexity \cite{LPV}. 

On the positive side, homogenization of \eqref{eq eps hj} for coercive and nonconvex Hamiltonians of fairly general type has been established in one dimension in \cite{ATY_1d, Gao16}, and in any space dimension for Hamiltonians of the form $H(x,p,\omega)=\big(|p|^2-1\big)^2+V(x,\omega)$ in~\cite{ATY_nonconvex}. {This result was generalized in \cite{QTY18}, where the authors studied Hamiltonians of the form $H(x,p,\omega)=\Psi(|p|)+V(x,\omega)$ under suitable monotonicity assumptions on $H$.}
Further positive results in random environments satisfying a finite-range dependence condition were obtained in~\cite{AC18} for Hamiltonians that are positively homogeneous of degree $\alpha\ge 1$. Subsequently, the techniques from that work were adapted in~\cite{FS17} to address Hamiltonians with strictly star-shaped sublevel sets.
Despite this significant progress, the general question of which equations of the form \eqref{eq eps hj} homogenize in the nonconvex case is still not fully understood.
\smallskip

When the coercivity condition of $H$ in $p$ is dropped, one loses control of the derivatives of solutions to \eqref{eq eps hj} and of the associated “cell” problem,  which are no longer Lipschitz continuous  in general.
As a consequence, homogenization of \eqref{eq eps hj} is known to fail even in the periodic case, regardless of whether the Hamiltonian is convex in $p$; see, for instance, the introductions in \cite{Ca10, CNS11} and some  examples in~\cite{BaTe13}.
In this level of generality, additional conditions must be imposed to compensate for the lack of coercivity of the Hamiltonian.
In the periodic and other compact settings, homogenization results of this type have been obtained in~\cite{Ar97, Ar98, AL98, AB10} and, more recently, in certain convex situations in~\cite{Ba24}, for a class of nonconvex Hamiltonians in dimension $d=2$ in~\cite{Ca10}, and in other nonconvex cases in \cite{Ba07, BaTe13}.
When $H(x,p,\omega)\coloneqq |p|+\langle V(x,\omega),p\rangle$, equation \eqref{eq eps hj} is known in the literature (up to a sign change) as the \emph{$G$-equation}. Homogenization has been established both in the periodic setting \cite{CNS11, XiYu10, Sic21} and in the stationary ergodic case \cite{NN11, CaSo13}; see also \cite{Coop22, Coop23} for quantitative results, under a smallness condition on the divergence of $V$, but without imposing $|V|<1$, meaning that $H$ is not assumed to be coercive in $p$.
\smallskip

This paper furnishes a new and fairly general class of nonconvex and noncoercive Hamiltonians for which \eqref{eq eps hj} homogenizes.
{\bluee Our first results, Theorems~\ref{thm:genhom} and \ref{Result: Concentration property}, establish homogenization and a quantitative convergence rate for solutions to \eqref{eq eps hj} with linear initial data} for a class of nonconvex Lipschitz Hamiltonians arising from differential game theory.
Specifically, we consider Hamiltonians of the form
\begin{equation}\label{intro def H}
H(x,p,\omega)\coloneqq \max_{b\in B}\min_{a\in A}\big\{-\ell(x,a,b,\omega)-\langle f(a,b),p\rangle\big\}
\qquad \text{for all } (x,p,\omega)\in \TM\times\Omega, \tag{H}
\end{equation}
where the main assumptions are that the law of $\ell$ has finite-range dependence, in the spirit of \cite{AC18, FS17}, and that there exist a direction $e\in \Sph^{d-1}$ and $\delta>0$ such that
\[
\langle f(a,b),e\rangle \ge \delta \qquad \text{for all } a\in A,\ b\in B. \tag{$f$}
\]
Notably, assumption $(f)$ precludes a Hamiltonian of the form \eqref{intro def H} from being coercive; see Remark~\ref{oss:noncoercive H}. This is a significant point of originality that distinguishes our work from most contributions on stochastic homogenization.
\smallskip

Another important novelty lies in the proof technique.
Indeed, thanks to the form \eqref{intro def H} of the Hamiltonian, we can represent the solution of \eqref{eq eps hj} as the value function of a differential game, as explained in~\cite{EvSo84}, and adopt a game-theoretic approach.
Such an approach has rarely been used in the homogenization of nonconvex HJ equations (see, e.g.,~\cite{BaTe13} in the periodic setting) and, to our knowledge, this is the first time it is employed to obtain a positive result in the stochastic case.
By analyzing optimal strategies, generated paths, and the dynamic programming principle, we show that solutions of \eqref{eq eps hj} exhibit asymptotic concentration and that their mean satisfies an approximate subadditive inequality. {\bluee The homogenization results then follows from the local Lipschitz regularity of solutions to \eqref{eq eps hj}.}
\smallskip

The probabilistic arguments we employ are related to those used in \cite{ACS14, AC18} and to their variation in \cite{FS17}, where the authors prove homogenization for several classes of first- and second-order nonconvex Hamilton–Jacobi equations.
They consider an auxiliary stationary Hamilton–Jacobi equation, the \emph{metric problem}~\cite{ACS14} (respectively, \cite{AC18, FS17}), whose solutions can be interpreted as the minimal cost of going from one point in space to another (respectively, to a planar surface).
By analogy with techniques from first-passage percolation \cite{alexander1993, kesten1993}, they combine Azuma’s inequality with a subadditive argument to prove homogenization of the metric problem and to obtain convergence rates and concentration estimates.
They then use a PDE argument to relate the metric problem to the original Hamilton–Jacobi equation.
In comparison, our proof presents several key differences. First, 
the concentration and subadditive techniques are applied to the value of a two-player zero-sum differential game, rather than to the cost of an optimal control considered in the metric problem. 
Indeed, the metric problem can be seen as a degenerate two-player game ({\bluee where} Player 2 has no
actions), i.e., an optimal-control formulation. While this usually produces convex Hamiltonians,
Armstrong–Cardaliaguet [7] showed that, under positive homogeneity,
the metric problem extends to certain nonconvex cases, with homogenization
obtained via quantitative concentration rather than exact subadditivity.
Secondly, our arguments rely primarily on a game-theoretic approach, exploiting the monotonicity (in the preferred direction $e$) of optimal trajectories, rather than on PDE methods.
Thirdly, we treat noncoercive Hamiltonians, whereas \cite{ACS14, AC18, FS17} assume coercivity.
This leads to several difficulties, including the fact that the spatial Lipschitz constants of solutions to \eqref{eq eps hj} are not uniformly bounded with respect to $\varepsilon$.
\smallskip

{\bluee As an interesting output of the quantitative homogenization rate in Theorem~\ref{Result: Concentration property}, we show that the homogenization results described above extend to Hamiltonians that arise as local uniform limits of suitable sequences of Hamiltonians of the form (H), see Corollary~\ref{cor:quantitative estimate}, and that, in general, need not admit the same max–min representation. A further result in this direction is given by Theorem~\ref{thm:genhom2}, where we extend homogenization to a class of nonconvex and noncoercive Lipschitz Hamiltonians that are not necessarily given by a max–min formula. 
This makes the game-theoretic approach even more notable, as it applies to Hamiltonians that do not \emph{a priori} arise from a differential game.}
For this extension, we adapt the argument introduced in \cite[Section~5]{EvSo84} to put these Hamiltonians into the form (H) when $p$ is constrained within a ball $B_R$, but using it to prove homogenization in the noncoercive setting is nontrivial and, as far as we know, new.
The difficulty lies in the fact that, due to the lack of coercivity of the Hamiltonian, the Lipschitz constants in $x$ of solutions to \eqref{eq eps hj} are not uniformly bounded in $\eps>0$, but instead blow up at rate $1/\eps$. 
In view of this, we tailored the proof of Theorem~\ref{thm:genhom} to this extension, ensuring that the constants appearing in the crucial estimates underpinning our arguments depend only on parameters that remain controlled when we perform the localization argument.

Our work is closely connected to the joint paper~\cite{GZ23} of the third author. There, the authors introduced a new model of discrete-time games, called \textit{percolation games}. They established a condition, called “oriented assumption”, under which the value of the $n$-stage game converges as $n \to \infty$. Moreover, they sketched a heuristic link between the existence of such a limit and stochastic homogenization, explaining how assumptions on the discrete game can be translated into assumptions on Hamiltonians. 
The present paper provides the first formal implementation of this program: we identify precisely which Hamiltonians correspond to “oriented games” and turn the convergence result for oriented games into a rigorous result in stochastic homogenization. In this sense, our paper constitutes the first “proof of concept” that the methodology outlined in~\cite{GZ23} can be fully validated. We refer the reader to~\cite[Section~4]{GZ23} for a detailed presentation of the methodology.
While the proof of Proposition~\ref{prop:expectation}, which constitutes the central result of our paper, shares several ingredients with Theorem~2.3 in~\cite{GZ23}, notably the use of concentration inequalities and subadditivity, the differential game and Hamilton–Jacobi framework calls for substantially different techniques. In particular, viscosity solutions and comparison principles play a central role, and their use is especially delicate here due to the non-coercive nature of the Hamiltonians under consideration.

\smallskip

{\bluee
The paper is organized as follows. 
In Section~\ref{sec:assumptions} we present the notation, the standing assumptions and the statements of our homogenization results, namely Theorems~\ref{thm:genhom}, \ref{Result: Concentration property} and \ref{thm:genhom2} and Corollary~\ref{cor:quantitative estimate}. 
In Section~\ref{sec:outline} we present the reduction  strategy we will follow to prove these results. 
Some proofs are deferred to Appendix~\ref{Section: Reduction}. 
In Section~\ref{sec:concentration} we prove the probabilistic concentration result. 
Section~\ref{sec:proof} is devoted to the proofs of Theorems~\ref{thm:genhom}, \ref{Result: Concentration property} and \ref{thm:genhom2} and Corollary~\ref{cor:quantitative estimate}. Appendix~\ref{sec: app A} contains the deterministic PDE results, along with their proofs, that we use in the paper. 
}
\medskip 

{\textsc{Acknowledgments.}}
{\bluee 
We thank the anonymous referees for their careful reading and many valuable comments. In particular, we are grateful to one referee for recognizing that our proof strategy could be leveraged to obtain, essentially for free, a quantitative rate of homogenization, and for suggesting the stability result leading to Corollary~\ref{cor:quantitative estimate}.
AD is a member of the INdAM Research Group GNAMPA.
He thanks Guy Barles for a long and valuable email exchange in the fall of 2023 concerning possible generalizations of the comparison principle stated in Theorem~\ref{appA: teo comp}.
AD is particularly grateful to Guy Barles for carefully reading his attempts to generalize Theorem~\ref{appA: teo comp} and for sharing his expertise on the matter, including the details needed to turn the argument sketched in~\cite[Remark~5.3]{barles2} into a full proof.
}
BZ is very grateful to Scott Armstrong and Pierre Cardaliaguet for all the enlightening discussions on homogenization theory. 
This work was supported by Sapienza Universit\`a di Roma - Research Funds 2018 and 2019, by the French Agence Nationale de la Recherche (ANR) under reference ANR-21-CE40-0020 (CONVERGENCE project), and by the ERC CoG 863818 (ForM-SMArt) grant. 
It was partly done during a 1-year visit of BZ to the Center for Mathematical Modeling (CMM) at University of Chile in 2023, under the IRL program of CNRS.\medskip

\numberwithin{teorema}{section}
\numberwithin{equation}{section}

\section{Assumptions and main results}\label{sec:assumptions}

Throughout the paper, we will denote by $d \in \N$ the dimension of the ambient space. 
We will denote either by $B_r(x_0)$ or $B(x_0,r)$ (respectively, $\overline B_r(x_0)$ or $\overline B(x_0,r)$)  the open (resp., closed) ball in $\R^d$ of radius $r > 0$ centered at $x_0 \in \M$. 
When $x_0 = 0$, we will more simply write $B_r$ (resp., $\overline B_r$).
The symbol $| \cdot |$ will denote the norm in $\R^k$, for any $k \geqslant 1$.
We will write $\varphi_n \ucv \varphi$ in $E \subseteq \R^k$ to mean that the sequence of functions $(\varphi_n)_n$ uniformly converges to $\varphi$ on compact subsets of $E$. 
We will denote by $\CC(X), \,\UC(X), \,\BUC(X)$, and $\Lip(X)$ the space of continuous, uniformly continuous, bounded uniformly continuous, and Lipschitz continuous functions on a metric space $X$, respectively. 
\smallskip

We will denote by $(\Omega, \F, \PP)$ a probability space, where $\P$ is a probability measure and ${\mathcal F}$ is the $\sigma$-algebra of $\P$--measurable subsets of $\Omega$. 
We will assume that $\P$ is {\em complete} in the usual measure theoretic sense. 
We will denote by ${\mathcal B}(\R^k)$ the Borel $\sigma$-algebra on $\R^k$ and equip the product spaces $\M \times \Omega$ and $\M \times A \times B \times \Omega$ with the product $\sigma$-algebras ${\mathcal B(\M)} \otimes {\mathcal F}$ and ${\mathcal B(\M)}\otimes \mathcal B(\R^m)\otimes \mathcal B(\R^m)\otimes {\mathcal F}$, respectively.
\smallskip

We will assume that $\P$ is invariant under the action of a one-parameter group $(\tau_x)_{x \in \M}$ of transformations $\tau_x \colon \Omega \to \Omega$. 
More precisely, we assume that: 
the mapping $(x, \omega) \mapsto \tau_x\omega$ from $\M \times \Omega$ to $\Omega$ is measurable; 
$\tau_0 = id$; 
$\tau_{x + y} = \tau_x \circ \tau_y$ for every $x, y \in \M$; 
and $\P \left( \tau_x (E) \right) = \P( E )$, for every $E \in{\mathcal F}$ and $x \in \M$. 
Lastly, we will assume that the action of $(\tau_x)_{x \in \M}$ is {\em ergodic}, i.e., any measurable function $\varphi \colon \Omega \to \R$ satisfying $\P( \varphi( \tau_x \omega ) = \varphi( \omega ) ) = 1$ for every fixed $x \in {\M}$ is almost surely equal to a constant.
\smallskip

A random process $f \colon \M \times \Omega \to \R$ is said to be \emph{stationary} with respect to $(\tau_x)_{x \in \M}$ if $f(x, \omega) = f(0, \tau_x \omega)$ for all $(x, \omega) \in \M \times \Omega$. 
Moreover, whenever the action of $(\tau_x)_{x \in \M}$ is ergodic, we refer to $f$ as a \emph{stationary ergodic} process.
\smallskip

Let $(X_i)_{i \in \I}$ be a (possibly uncountable) family of jointly measurable functions from $\M \times \Omega$ to $\R$. 
We will say that the random variables $(X_i)_{i \in \I}$ exhibit {\em long-range independence} (or, equivalently, have {\em finite range of dependence}) if there exists $\rho>0$ such that, for all pair of sets $S, \widehat{S} \subseteq \M$ such that their Hausdorff distance $d_H(S, \widehat{S}) > \rho$, the generated $\sigma$--algebras $\sigma(\{ X_i(x,\cdot) : i \in \I,  x \in S \})$ and $\sigma(\{ X_i(x,\cdot) : i \in \I, x \in \widehat S\})$ are independent, in symbols,         
\begin{equation}\label{def R range dependence}
	\sigma(\{ X_i(x,\cdot) : i \in \I,  x \in S \})
	\indep 
	\sigma(\{ X_i(x,\cdot) : i \in \I,  x \in \widehat S \})
	\qquad
	\hbox{whenever\ \ $d_H(S, \widehat{S}) \ge \rho$}.
	\tag{FRD}
\end{equation}

In this paper, we will be concerned with the Hamilton-Jacobi equation of the form
\begin{equation}\label{eq:generalHJ}
	\partial_t u+H(x,D_x u,\omega)=0,\quad\hbox{in}\ \Tcyl,
\end{equation}
where the Hamiltonian $H \colon \R^d \times\R^d \times \Omega \to \R$ is assumed to be stationary with respect to shifts in $x$ variable, i.e., $H(x + y, p, \omega) = H(x, p, \tau_y \omega)$ for every $x, y \in \R^d$, $p \in \R^d$, and $\omega \in \Omega$, and to belong to the class $\Ham$ defined as follows. 
\begin{definition}\label{def:Ham}
	A function $H \colon \R^d\times\R^d\times\Omega\to\R$ is said to be in the class $\Ham$ if it is jointly measurable and it satisfies the following conditions, for some constant  $\beta>0$:
	\begin{itemize}
		\item[(H1)] 
			$|H(x, p, \omega)| \leqslant \beta \left( 1 + |p| \right) 
			\qquad \qquad \qquad \qquad\  \text{for all } (x, p) \in \TM$;
			\smallskip
		\item[(H2)] 
			$|H(x, p, \omega) - H(x, q, \omega)| \leqslant \beta |p - q| 
			\qquad\qquad \text{for all } x, p, q\in\M$;
			\smallskip
		\item[(H3)] 
			$|H(x, p, \omega) - H(y, p, \omega)| \leqslant \beta |x - y| 
			\qquad \quad \quad \text{for all } x, y, p\in\M$.
	\end{itemize}
\end{definition}

Assumptions (H1)-(H3) guarantee well-posedness in $\CC(\cTcyl)$, for every fixed $T>0$, of the Cauchy problem associated with equation \eqref{eq:generalHJ} when the initial datum is in $\UC(\M)$. 
Furthermore, the solutions are actually in $\UC(\cTcyl)$. 
Solutions, subsolutions and supersolutions of \eqref{eq:generalHJ} will be always understood in the viscosity sense, see \cite{bardi, barles_book, barles2, users}, and implicitly assumed continuous, if not otherwise specified.
\smallskip

The purpose of this paper is to prove a homogenization result for equation~\eqref{eq:generalHJ} for a subclass of stationary Hamiltonians belonging to $\Ham$ that arise from Differential Game Theory and that can be expressed in the following max-min form:
\begin{equation}\label{def H}
\tag{H}
	H(x,p,\omega) 
		\coloneqq \max_{b \in B} \min_{a \in A} \left\{ -\ell(x, a, b,\omega) - \langle f(a, b), p \rangle \right\} 
		\qquad \text{for all } (x, p, \omega) \in \TM\times\Omega.
\end{equation}
Here $A, B$ are compact subsets of $\R^m$, for some integer $m$, and the product space $\M \times A \times B \times \Omega$ is equipped with the product $\sigma$-algebra ${\mathcal B(\M)} \otimes \mathcal B(\R^m) \otimes \mathcal B(\R^m) \otimes {\mathcal F}$. 
{\bluee 
The mapping $f \colon A \times B \to \R^d$ is a continuous {\bluee vector--valued function} satisfying the following assumption:
\begin{itemize}	
\item[($f$)] (oriented dynamics)  
		the dynamics given by $f \colon A \times B \to \M$ is \emph{oriented}, i.e., there exists $\delta > 0$ and a direction $e \in \Sph^{d-1}$ such that 
		\[
			\langle f(a, b), e \rangle \geqslant \delta \qquad \hbox{for all $(a, b) \in A \times B$.}
		\] 
\end{itemize}
For the {\em running cost} $\ell \colon \R^d \times A \times B \times \Omega \to \R$, we will assume it is jointly measurable and satisfies:
\begin{itemize}
    \item[($\ell_1$)] \ 
    	$\ell(\cdot, \cdot, \cdot, \omega) \in \BUC(\R^d \times A \times B)$ \quad for every $\omega \in \Omega$;
    	\smallskip 
    \item[($\ell_2$)] \ 
    	there exists a constant $\Lip(\ell) > 0$ such that 
		\[
			|\ell(x, a, b, \omega) - \ell(y, a, b, \omega)| \leqslant \Lip(\ell)\, |x - y|
			\qquad \hbox{for all $x, y \in \R^d$, $a \in A$, $b \in B$ and $\omega \in \Omega$;}
		\]
		\item[($\ell_3$)] \ 
			$\ell$ is stationary with respect to $x$, i.e., 
		\[
			\ell(x, a, b, \omega) = \ell(0, a, b, \tau_x \omega)
			\qquad
			\hbox{for all $x \in \M$, $a \in A$, $b \in B$ and $\omega \in \Omega$.}
		\]
\end{itemize}
}
Throughout the paper, we will denote by $\Hamdg$ the subclass of Hamiltonians in $\Ham$ that can be put in the form \eqref{def H} with $f$ and $\ell$ satisfying assumptions ($f$) and ($\ell_1$)--($\ell_2$), respectively. 
A Hamiltonian $H$ belonging to $\Hamdg$ will be furthermore termed stationary to mean that assumption ($\ell_3$) is in force.  In the sequel, we shall denote by $\| \ell \|_\infty$ the $L^\infty$--norm of $\ell$ on $\R^d \times A \times B \times \Omega$, which is finite due to ($\ell_1$), ($\ell_3$), and the ergodicity assumption on $\Omega$.
\smallskip 

{\bluee
The proof of our homogenization result relies crucially on the oriented-dynamics assumption $(f)$  and on the following long--range independence hypothesis:
\begin{itemize}
	\item[($\ell_4$)] (long-range independence) 
		the random variables $\left( \ell(\cdot, a, b, \cdot) \right)_{(a, b) \in A \times B}$ from $\M \times \Omega$ to $\R$ exhibit long-range independence, i.e., there exists $\rho > 0$ such that \eqref{def R range dependence} holds with $\I \coloneqq A \times B$ and $X_i \coloneqq \ell(\cdot, a, b, \cdot)$ where $i = (a, b)$.
		\medskip
\end{itemize}
\smallskip
	    
The specific form \eqref{def H} of the Hamiltonian allows to represent solutions to equation \eqref{eq:generalHJ} via suitable formulae issued from Differential Games, see~\cite{EvSo84}. Indeed, let us denote by 
\begin{align*}
	\AA(T) \coloneqq \left\{ a \colon [0,T] \to A \, : \, a \text{ measurable} \right\},\qquad
	\BB(T) \coloneqq \left\{ b \colon [0,T] \to B \, : \, b \text{ measurable} \right\}.
\end{align*}
The sets $A$ and $B$ are to be regarded as action sets for Player~1 and~2, respectively. 
A {\em nonanticipating strategy} for Player~1 is a function $\alpha \colon \BB(T) \to \AA(T)$ such that, 
for all $b_1, b_2 \in \BB(T)$ and $\tau \in [0, T]$, 
\[
	b_1(\cdot) = b_2(\cdot) \quad \text{in } [0,\tau]
		\quad
		\Rightarrow
		\quad
	\alpha[b_1] (\cdot) = \alpha[b_2](\cdot) \quad \text{in } [0,\tau] \,.
\]
We will denote by $\Gamma(T)$ the family of such nonanticipating strategies for Player~1. 
For every fixed $\omega\in\Omega$ and every $(t,x)\in\cyl$, let us set  
\begin{equation}
\label{def value function}
	v(t,x,\omega) \coloneqq \sup_{\alpha \in \Gamma(t)}\inf_{b \in \BB(t)} \left\{
		\int_0^t \ell(y_x(s),\alpha[b](s),b(s),\omega)\,ds + g(y_x(t)) 
	\right\},
\end{equation}
where $y_x \colon [0,t] \to \M$ is the solution of the ODE 
\begin{align}\tag{ODE}\label{eq ODE}
	\begin{cases}
		\dot y_x(s) = f(\alpha[b](s),b(s))\qquad\hbox{in $[0,t]$} \\
		y_x(0) = x.
	\end{cases}
\end{align} 
The function $v$ defined by \eqref{def value function} is usually called {\em value function}. It is the unique continuous viscosity solution of the unscaled HJ equation \eqref{eq eps hj} (i.e., with $\eps=1$) satisfying the initial condition $v(0,\cdot,\omega)=g$ on $\M$ for every $\omega\in\Omega$. We refer the reader to Appendix~\ref{app:PDE} for more details and relevant results.
\smallskip
}

Our main result reads as follows.

\begin{theorem}\label{thm:genhom}
	Let $H$ be a stationary Hamiltonian belonging to $\Hamdg$ and satisfying hypotheses ($\ell_4$) and ($f$). 
	Then, the HJ equation \eqref{eq eps hj} homogenizes, i.e., there exists a continuous function $\HV \colon \M \to \R$, called {\em the effective Hamiltonian}, and a set $\hat\Omega$ of probability 1 such that, for every fixed $\omega\in\hat\Omega$ and every $g \in \D{UC}(\R^d)$, the solutions $u^\epsilon(\cdot, \cdot, \omega)$ of \eqref{eq eps hj} satisfying $u^\epsilon(0, \,\cdot\,, \omega) = g$ converge, locally uniformly on $\cTcyl$ as $\epsilon \to 0^+$, to the unique solution $\ol{u}$ of 
    \begin{eqnarray*}
	   \begin{cases}
	    \partial_t \ol{u} +\HV( D_x\ol{u}) = 0 
	    	& \hbox{in $\Tcyl$} \\
	    \ol{u}(0,\,\cdot\,) = g 
	    	& \hbox{in $\R$}.
		\end{cases}
    \end{eqnarray*}
	Furthermore, $\HV$ satisfies (H1) and (H2).
\end{theorem}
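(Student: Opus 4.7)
I would follow the classical scheme for stochastic homogenization of HJ equations, in which the role of the ``cell problem'' is played by the long-time behaviour of solutions with affine initial data, but carried out entirely through the differential-game representation of \cite{EvSo84}. Fix $p\in\M$ and let $v(t,x,\omega)$ be the solution of $\d_t v + H(x,D_x v,\omega)=0$ with $v(0,x)=\langle p,x\rangle$. Thanks to the form \eqref{def H}, $v(t,x,\omega)$ is the (upper) value of a two-player zero-sum differential game whose state $X(\cdot)$ is driven by $f$ and whose payoff couples the running cost $\ell$ with a terminal term linear in $p$. Hypothesis $(f)$ guarantees that every admissible trajectory advances monotonically at speed at least $\delta$ in the preferred direction $e$, which is what will stand in for the missing coercivity of $H$. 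The parabolic rescaling $u^\eps(t,x,\omega)=\eps\, v(t/\eps,x/\eps,\omega)$ then reduces the homogenization problem with affine initial datum to producing, $\PP$-a.s., a deterministic limit
$$-\HV(p)\defas \lim_{t\to\infty}\frac{v(t,0,\omega)}{t}.$$

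Next, I would establish sub-Gaussian concentration of $v(t,0,\cdot)$ around $\E\, v(t,0,\cdot)$. The orientation property confines every admissible trajectory to a cone about $e$ and, within time $t$, to a spatial region of linear size $O(t)$. Slicing this region into disjoint slabs transverse to $e$ of width slightly larger than the dependence radius $\rho$ from $(\ell_4)$, the value $v(t,0,\cdot)$ becomes a bounded-difference functional of finitely many independent random fields, and a martingale/Azuma--McDiarmid argument yields
$$\PP\bigl(|v(t,0,\cdot)-\E\, v(t,0,\cdot)|\geqslant \lambda t\bigr)\leqslant 2\exp(-c\,\lambda^2 t),$$
where $c$ depends only on $\|\ell\|_\infty$, $\|f\|_\infty$, $\delta$, $\rho$ and $|p|$, but \emph{not} on $\eps$. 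This is exactly the role of Section~\ref{sec:concentration}.

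Third, I would prove an approximate subadditivity $\E\, v(s+t,0,\cdot)\leqslant \E\, v(s,0,\cdot)+\E\, v(t,0,\cdot)+o(s+t)$ via the dynamic programming principle: after playing an almost optimal strategy for time $s$, the state $X(s)$ lies in a half-space of $e$-depth at least $\delta s$, and by stationarity of $\ell$ the continuation cost from $X(s)$ has the same law as $v(t,0,\cdot)$, up to a boundary correction controlled by the slab construction of step two. Combining this with the concentration bound in the standard Kingman--Fekete manner yields both existence of the limit and its $\omega$-independence; continuity of $p\mapsto\HV(p)$ together with the bounds (H1)--(H2) then follow directly from the max-min formula \eqref{def H} and the boundedness of $A$, $B$ and $\ell$.

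Finally, the convergence of $u^\eps$ for a general $g\in\UC(\M)$ is obtained by an Evans-type perturbed test function argument with the affine case serving as the cell-problem surrogate, combined with the locally uniform PDE estimates for $u^\eps$ and $\ol u$ recorded in Appendix~\ref{sec: app A}. The principal obstacle, as stressed in the introduction, is the non-coercivity of $H$: the spatial Lipschitz constants of $u^\eps$ blow up like $1/\eps$, so one cannot extract a convergent subsequence by Arzel\`a--Ascoli in the usual way. The role of $(f)$ is precisely to substitute for coercivity by giving uniform control of the trajectories along $e$, which in turn forces every constant appearing in the concentration and subadditivity estimates to be chosen independently of $\eps$ and thus to survive the passage to the limit.
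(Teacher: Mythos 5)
Your proposal follows essentially the same route as the paper: reduce to affine initial data (Theorem~\ref{teo reduction}) and to a single time (Lemma~\ref{lemma reduction}); exploit the game representation \eqref{def app value function}; prove Azuma-type concentration by slicing the reachable set into slabs transverse to $e$ of width comparable to the dependence radius $\rho$ (Section~\ref{sec:concentration}); prove approximate subadditivity of $U_\theta(t)=\E[u_\theta(t,0,\cdot)]$ through the dynamic programming principle; pass to the limit with Borel--Cantelli and the Lipschitz bounds of Theorem~\ref{teo app Lip estimates}; and conclude for general $g$ by the perturbed test function method (Proposition~\ref{app prop hom}). One step, however, is left genuinely unresolved in your sketch, namely the mechanism behind subadditivity. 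You argue that after time $s$ the state $X(s)$ lies at $e$-depth at least $\delta s$, and that ``by stationarity of $\ell$ the continuation cost from $X(s)$ has the same law as $v(t,0,\cdot)$.'' As stated this is not correct: $X(s)$ is a random, $\omega$-dependent point (it is driven by the adversary's control $b$, which may react to the environment), and stationarity only guarantees that $u_\theta(n,x,\cdot)$ and $u_\theta(n,0,\cdot)$ are equal in law for each \emph{deterministic} $x$; one cannot substitute $x=X(s)$. Trying to repair this by conditioning on the past and invoking slab-independence is also delicate because the value is a $\sup$--$\inf$ and the adversary can correlate $X(s)$ with the environment in the overlap slab. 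The paper avoids this entirely: the DPP yields $u_\theta(m+n,0,\omega)\geqslant u_\theta(m,0,\omega)+\inf_{x\in\mathbf{B}(m)}u_\theta(n,x,\omega)$, and the key estimate is a quantitative lower bound on the \emph{expected infimum}, $\E\bigl[\inf_{x\in\mathbf{B}(t)}u_\theta(t,x,\cdot)\bigr]\geqslant U_\theta(t)-\widehat K\,t^{3/4}$, obtained by discretizing $\mathbf{B}(t)$, applying Proposition~\ref{prop:concentration} at each grid point, a union bound, and the Lipschitz estimate in $x$. This $\E[\inf]$ bound, not a same-law argument at the random endpoint, is what makes the subadditive error $t^{3/4}$ sublinear and lets the de Bruijn--Erd\H{o}s theorem (rather than plain Fekete) close the argument. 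Filling in that step would make your plan match the paper's proof.
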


\begin{remark}\label{oss:noncoercive H}
	We stress that a Hamiltonian of the form \eqref{def H} with $f$ satisfying condition ($f$) is never coercive. Indeed,
	\[
		\lim_{\lambda \to -\infty} H(x, \lambda e, \omega) = +\infty,
		\quad 
		\lim_{\lambda \to +\infty} H(x, \lambda  e, \omega) = -\infty
		\qquad\hbox{for every $(x, \omega) \in \M \times \Omega$.} 
	\]
\end{remark}
{\bluee
Theorem~\ref{thm:genhom} is actually derived from the following stronger quantitative result.

\begin{theorem} 
\label{Result: Concentration property}
Let $H$ be a stationary Hamiltonian belonging to $\Hamdg$ and satisfying hypotheses ($\ell_4$) and ($f$). 
Let us denote by $\tilde u^\eps_\theta$ the solution  of \eqref{eq eps hj} satisfying $\tilde u^\epsilon(0,x, \omega) = \langle \theta, x\rangle$ for all $(x,\omega)\in\M\times\Omega$. 
Then there exists a deterministic function $\overline H \colon \M \to \R$ such that, for every fixed $\theta \in \M$, $T>0$ and $R>0$,  we have 
\begin{equation}\label{claim concentration inequality}
   \m{P}\left( \sup_{[0,T]\times B_{R}} \left |{\tilde u^\eps_\theta(t, x, \omega)}-\langle \theta, x\rangle + t\overline{H}(\theta)\right| \geq 
   K  \left(-\eps {\ln \eps}\right)^{1/2} \right)  \leq \eps^{2}    
\quad
\hbox{for all $\eps\leq 1/2$,}
\end{equation}
for some constant $K$ depending on $R$,\,$T$\,$|\theta|$,\,$d$,\,$\beta$,\,$\rho,\delta,\, \Lip(\ell)$ and $\left\|f\right\|_\infty$.
\end{theorem}

This quantitative estimate yields the following interesting consequence. 

\begin{cor}\label{cor:quantitative estimate}
Let $(H_n)_n$ be a sequence of stationary Hamiltonians belonging to $\Hamdg$ and satisfying hypotheses ($\ell_4$) and ($f$). Let us assume that the associated quantities $\beta_n$,\,$\rho_n$,\,$\delta_n$,\, $\Lip(\ell_n)$ and $\left\|f_n\right\|_\infty$ satisfy the following bounds:
\begin{equation}\label{eq:equi-bounds}
C \coloneqq \sup_n\left(\beta_n+\rho_n+\Lip(\ell_n)+\left\|f_n\right\|_\infty\right)<+\infty,
\qquad
\delta \coloneqq \inf_n \delta_n>0.
\end{equation}
If $H_n(\cdot,\cdot,\omega)\ucv H(\cdot,\cdot,\omega)$ in $\TM$ for every $\omega\in\Omega$, the following holds:
\begin{enumerate}[(i)]
\item there exists $\overline H:\M\to\R$ satisfying (H1), (H2) with $\beta \coloneqq \sup_n\beta_n$ such $\overline H_n\ucv \overline H$ in $\R^d$;\smallskip
\item  for every $\theta\in\M$, the solution $\tilde u^\eps_\theta$ of \eqref{eq eps hj} subject to $\tilde u^\epsilon(0,x, \omega) = \langle \theta, x\rangle$ for all $(x,\omega)\in\M\times\Omega$ satisfies \eqref{claim concentration inequality}, for every fixed $T>0$ and $R>0$, for some constant $K$ depending on $R$,\,$T$\,$|\theta|$,\,$d$,\,$C$,\,$\delta,\, \Lip(\ell)$.
\end{enumerate}
In particular, the HJ equation \eqref{eq eps hj} homogenizes with effective Hamiltonian $\overline H$.
\end{cor}

We emphasize that the limiting $H$ above is a stationary Hamiltonian belonging to $\Ham$, but it  need not lie in $\Hamdg$; namely, it cannot, in general, be written in the max-min form \eqref{def H}.\smallskip
}

{\bluee 
By combining suitable Lipschitz bounds for solutions to \eqref{eq eps hj} with Lipschitz initial data with a localization argument inspired by \cite[Section 5]{EvSo84}, we further establish the homogenization results above for a different subclass of Hamiltonians in $\Ham$ that intersects, but is not contained in, $\Hamdg$. This subclass is described in the next theorem.
}

\begin{theorem}\label{thm:genhom2}
	Let $G$ be a stationary Hamiltonian belonging to $\Ham$ and satisfying the following assumption:
	\begin{itemize}
		\item[(G1)] the random variables $\left( G(\cdot, p, \cdot) \right)_{p \in \M}$ from $\M \times \Omega$ to $\R$ exhibit long-range independence, i.e., there exists $\rho > 0$ such that \eqref{def R range dependence} holds with $\I \coloneqq \M$ and $X_i \coloneqq G(\cdot, p, \cdot)$ where $i = p$. 
	\end{itemize}
{\bluee	Then, the quantitative estimate stated in \Cref{Result: Concentration property} and the homogenization result stated in Theorem~\ref{thm:genhom} hold} for any Hamiltonian $H$ of the form 
	$H(x, p, \omega) \coloneqq G(x, \pi(p), \omega) + \langle p, v \rangle$, where $v$ is a nonzero vector in $\M$ and $\pi \colon \M \to \M$ is a linear map such that $\pi(v) = 0$. 
\end{theorem}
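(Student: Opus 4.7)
The plan is to reduce Theorem~\ref{thm:genhom2} to Theorem~\ref{thm:genhom} by a truncation argument inspired by \cite[Section 5]{EvSo84}, coupled with a careful tracking of $R$-dependence in the proof of Theorem~\ref{thm:genhom}. For any $R > 0$, the Lipschitz continuity of $G(x,\cdot,\omega)$ given by (H2) yields the classical representation
\[
    G(x,q,\omega) \;=\; \max_{b \in \overline B_R}\,\min_{|a| \leqslant \beta}\,\bigl[G(x,b,\omega) + \langle a,\, q-b\rangle\bigr]
    \qquad\text{for every } q \in \overline B_R.
\]
Combining this with the drift $\langle v,p\rangle$ and using $\pi(v) = 0$, one obtains, for all $p \in \M$ with $\pi(p) \in \overline B_R$,
\[
    H(x,p,\omega) \;=\; \max_{b \in \overline B_R}\,\min_{|a| \leqslant \beta}\,\bigl[-\ell_R(x,a,b,\omega) - \langle f_R(a,b),\, p\rangle\bigr],
\]
with $\ell_R(x,a,b,\omega) \coloneqq -G(x,b,\omega) + \langle a,b\rangle$ and $f_R(a,b) \coloneqq -\pi^{*}(a) - v$. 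Choosing $e \coloneqq -v/|v|$ and using $\pi(e) = 0$, one computes $\langle f_R(a,b), e\rangle = -\langle a,\pi(e)\rangle - \langle v, e\rangle = |v|$, so condition ($f$) is satisfied with $\delta = |v|$, \emph{uniformly in $R$}. Hypotheses $(\ell_2)$ (with $\Lip(\ell_R) = \beta$), $(\ell_3)$ (stationarity), and $(\ell_4)$ (long-range independence, via (G1)) are immediate; only the $L^\infty$-bound relevant for $(\ell_1)$ degrades linearly in $R$.

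Extending the above max-min formula to all $p \in \M$ produces a Hamiltonian $H_R \in \Hamdg$ satisfying the hypotheses of Theorem~\ref{thm:genhom}, which therefore homogenizes to some effective $\HV_R$. A standard viscosity Lipschitz-propagation estimate for \eqref{eq eps hj} based on (H3) gives $\|D_x u^\varepsilon(t,\cdot,\omega)\|_\infty \leqslant L_0 + \beta t/\varepsilon$ for Lipschitz initial data $g$ with $\Lip(g) = L_0$, hence $\|\pi \circ D_x u^\varepsilon\|_\infty \leqslant R(\varepsilon) \coloneqq \|\pi\|\bigl(L_0 + \beta T/\varepsilon\bigr)$ on $[0,T] \times \M$. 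Since $H_R \equiv H$ on $\{\pi(p) \in \overline B_R\}$, the comparison principle forces $u^\varepsilon$ to coincide on $[0,T] \times \M$ with the solution $u^\varepsilon_{R(\varepsilon)}$ of \eqref{eq eps hj} with Hamiltonian $H_{R(\varepsilon)}$. The whole question thus reduces to analyzing the limit of $u^\varepsilon_{R(\varepsilon)}$ as $\varepsilon \to 0^+$ and, by a density/comparison argument, extending this to uniformly continuous initial data.

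The main obstacle, and the reason for the ``tailoring'' announced in the introduction, is that the target Hamiltonian $H_{R(\varepsilon)}$ depends on $\varepsilon$ through $R(\varepsilon) \to \infty$, so Theorem~\ref{thm:genhom} cannot be invoked as a black box. The remedy is to revisit its proof and monitor how every quantitative ingredient --- the concentration estimate of Section~\ref{sec:concentration}, the approximate subadditive inequality for the means, and the resulting sublinear error terms --- depends on $R$. The geometric core of the game-theoretic argument is preserved because ($f$) holds with $\delta = |v|$ independent of $R$; only the constants originating from $\|\ell_R\|_\infty$ and from the size of the control set $\overline B_R$ are affected, and only polynomially. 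Provided this polynomial growth is absorbed by the scale $R(\varepsilon) \sim 1/\varepsilon$ --- which the authors arrange by paying attention, throughout the proof of Theorem~\ref{thm:genhom}, to constants that can be controlled as $\varepsilon \to 0^+$ --- the deviation probabilities and subadditive errors still vanish, and the effective Hamiltonian $\HV(p) \coloneqq \lim_{R \to \infty} \HV_R(p)$ is well-defined and governs the limit of $u^\varepsilon$. Carrying out this quantitative, uniform-in-$R$ reopening of the proof of Theorem~\ref{thm:genhom} is the delicate technical step.
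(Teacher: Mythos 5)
Your proposal has the right skeleton --- the truncation via \cite[Lemma 5.1]{EvSo84}, the observation that condition ($f$) holds with $\delta = |v|$ uniformly in $R$, and the recognition that the proof of Theorem~\ref{thm:genhom} must be re-examined with the $R$-dependence of constants in mind. But the decisive quantitative step is misidentified. You describe the constants entering the concentration and subadditivity estimates as depending on $\|\ell_R\|_\infty$ and on the size of the control set, and then argue that this dependence is ``only polynomial'' and can be ``absorbed'' by the scale $R(\varepsilon)\sim 1/\varepsilon$. This is not how the argument closes, and it would not close that way: a constant in Azuma's inequality or in the subadditive error term that grows polynomially in $R\sim t$ would destroy the estimates, not merely weaken them. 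The point the paper actually makes --- flagged explicitly in Remark~\ref{remark:sub} and in the remark following Proposition~\ref{prop:concentration} --- is that the constant $c$ in Proposition~\ref{prop:concentration} and the constant $\widehat K$ in Remark~\ref{remark:sub} depend only on $\rho$, $\delta=|v|$, $\Lip(\ell)$, $\|f\|_\infty$, $\beta$, $|\theta|$, and \emph{not} on $\|\ell\|_\infty$, hence not on $R$ at all. Since the truncation yields $\Lip(\ell)\leqslant\beta$ and $\|f\|_\infty\leqslant\beta$ independently of $R$, those constants are simply $R$-free, and there is nothing to absorb.

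A second, related gap: you propose to define $\HV(p)\coloneqq\lim_{R\to\infty}\HV_R(p)$, where $\HV_R$ is the effective Hamiltonian of the truncated $H_R$. This limit is neither established in your argument nor what the paper uses. Instead, the paper observes that, by the Lipschitz bound of Theorem~\ref{teo app Lip estimates}, $u^s_\theta(t,\cdot)\equiv u_\theta(t,\cdot)$ for $0\leqslant t\leqslant s$, so that Proposition~\ref{prop:concentration} applied to $H^s$ with $s=t$ yields concentration of $u_\theta(t,0,\cdot)$ directly, and Remark~\ref{remark:sub} applied with $s=m+n$ yields the subadditive inequality for $U_\theta$ directly. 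One then concludes that $U_\theta(t)/t$ converges, defining $\HV(\theta)$ from this limit, and re-runs the Borel--Cantelli argument of Proposition~\ref{Result: Concentration property} verbatim. No limit over $R$ of effective Hamiltonians is ever taken.
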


Examples of Hamiltonians $G$ lying in $\Ham$ and satisfying $(G1)$ are the ones of the form $G(x, p, \omega) \coloneqq G_0(p) + V(x, \omega)$, where $G_0$ belongs to $\Ham$ and $V \colon \M \times \Omega \to \R$ is a stationary function, globally bounded and Lipschitz on $\M$, which satisfies \eqref{def R range dependence} with $\I \coloneqq \left\{ 0 \right\}$ and $X_0 \coloneqq V$.

\section{Reduction arguments for homogenization}\label{sec:outline}

In this section we describe the reduction strategy that we will follow to prove Theorem~\ref{thm:genhom}.
The first step consists in noticing that, in order to prove homogenization for equation \eqref{eq eps hj}, it is enough to restrict to linear initial data instead of any $g\in\UC(\M)$. 
The precise statement is the following. 

\begin{theorem}\label{teo reduction}
	Let $H$ satisfy hypotheses (H1)-(H3) and denote by $\tilde u^\eps_\theta$ the unique continuous solution of equation \eqref{eq eps hj} satisfying $\tilde u^\eps_\theta(0, x, \omega) = \langle \theta, x \rangle$ for all $(x, \omega) \in \M \times \Omega$ and for every fixed $\theta \in \M$ and $\eps > 0$.
	Assume there exists a function  $\overline H \colon \M \to \R$ such that, for every $\theta \in \M$, the following convergence takes place for every $\omega$ in a set $\Omega_\theta$ of probability $1$:
	\begin{equation}\label{app hom linear data}
		\tilde u^\eps_\theta(t, x, \omega) \ {\ucv} \ \langle \theta, x \rangle - t \overline H(\theta)
		\quad\hbox{in $\cTcyl$ as $\epsilon \to 0^+$.}
	\end{equation}
	Then, $\overline H$ satisfies condition (H1)-(H2). 
	Furthermore, there exists a set $\hat\Omega$ of probability 1 such that, for every fixed $\omega\in\hat\Omega$ and every $g \in \D{UC}(\R^d)$,  the unique function $u^\eps(\cdot, \cdot, \omega) \in \CC(\cTcyl)$ which solves \eqref{eq eps hj} with initial
	condition $u^\eps(0, \cdot, \omega) = g$ in $\M$ converges, locally uniformly in $\cTcyl$ as $\eps \to 0^+$, to the unique solution $\overline u \in \D{C}(\cTcyl)$ of
	\begin{equation}\label{app effective eq}
		\partial_t \overline u + \overline H(D_x \overline u) = 0
		\qquad\hbox{in $\Tcyl$}
	\end{equation}
	with the initial condition $\overline u(0, \cdot) = g$.
\end{theorem}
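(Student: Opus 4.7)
The plan is to implement the classical two-step reduction: first, deduce (H1)-(H2) for $\overline H$ and upgrade the plane-wave convergence to a single full-measure set valid for all $\theta\in\M$; then, run a half-relaxed limits argument with the plane waves as time-dependent approximate correctors.

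\textbf{Step 1 — Properties of $\overline H$ and a universal full-measure set.}
I would first compare $\tilde u^\eps_\theta$ with the explicit affine sub/supersolutions $(t,x)\mapsto \langle \theta,x\rangle \pm \beta(1+|\theta|)t$ of \eqref{eq eps hj} (classical by (H1)) and pass to the limit on $\Omega_\theta$ to obtain $|\overline H(\theta)|\le\beta(1+|\theta|)$. For the Lipschitz estimate, given $\theta,\theta'\in\M$, the function
\[
	V^\eps(t,x,\omega)\coloneqq \tilde u^\eps_{\theta'}(t,x,\omega) + \langle \theta-\theta', x\rangle + \beta|\theta-\theta'|\,t
\]
is a viscosity supersolution of \eqref{eq eps hj} by (H2) of $H$ and agrees with $\tilde u^\eps_\theta$ at $t=0$; by comparison, $\tilde u^\eps_\theta\le V^\eps$ deterministically, which in the limit and after symmetrizing gives $|\overline H(\theta)-\overline H(\theta')|\le\beta|\theta-\theta'|$. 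Since the above bound is deterministic and uniform in $\eps$, defining $\Omega_0\coloneqq\bigcap_{\theta\in\Q^d}\Omega_\theta$ (probability $1$) and combining with the Lipschitz continuity of $\overline H$ extends \eqref{app hom linear data} on $\Omega_0$ to every $\theta\in\M$ simultaneously.

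\textbf{Step 2 — Homogenization for Lipschitz data.}
Fix $\omega\in\Omega_0$ and $g\in\Lip(\M)$. Barrier comparison with $g(x)\pm Ct$ for suitable $C$ gives a local uniform $L^\infty$ bound on $(u^\eps(\cdot,\cdot,\omega))_\eps$ on $\cTcyl$, so I would set up the half-relaxed limits
\[
	\widehat u(t,x) \coloneqq \limsup_{\substack{\eps\to 0^+\\(s,y)\to(t,x)}} u^\eps(s,y,\omega), \qquad
	\check u(t,x) \coloneqq \liminf_{\substack{\eps\to 0^+\\(s,y)\to(t,x)}} u^\eps(s,y,\omega),
\]
which are finite, respectively USC and LSC, with $\widehat u\ge\check u$. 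To show $\widehat u$ is a viscosity subsolution of \eqref{app effective eq}, I would take smooth $\phi$ for which $\widehat u-\phi$ has a strict local maximum at $(t_0,x_0)\in\Tcyl$, set $\theta\coloneqq D_x\phi(t_0,x_0)$ and $c\coloneqq\partial_t\phi(t_0,x_0)$, and use the time-dependent approximate corrector
\[
	w^\eps(t,x,\omega) \coloneqq \tilde u^\eps_\theta(t,x,\omega) - \langle \theta,x\rangle + t\overline H(\theta),
\]
which by Step 1 converges to $0$ locally uniformly and solves $\partial_t w^\eps + H(x/\eps,\theta+D_x w^\eps,\omega)=\overline H(\theta)$ in the viscosity sense. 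The standard Evans-Souganidis perturbed test function argument — with $\phi^\eps\coloneqq\phi+w^\eps$ converging to $\phi$ locally uniformly, locating a maximum $(t_\eps,x_\eps)\to(t_0,x_0)$ of $u^\eps-\phi^\eps$, and combining the viscosity subsolution property of $u^\eps$ with the equation for $w^\eps$, (H2) of $H$, and the convergence $D_x\phi(t_\eps,x_\eps)\to\theta$ — yields $c+\overline H(\theta)\le 0$ in the limit. A symmetric argument gives the supersolution inequality for $\check u$, barrier estimates at $t=0$ force $\widehat u(0,\cdot)=\check u(0,\cdot)=g$, and the comparison principle for \eqref{app effective eq} (valid since $\overline H$ satisfies (H1)-(H2)) implies $\widehat u=\check u=\overline u$.

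\textbf{Step 3 — Extension to UC data and the main obstacle.}
For arbitrary $g\in\UC(\M)$, I would approximate uniformly by Lipschitz $(g_n)$ and use $L^\infty$-contractivity of the solution semigroups of \eqref{eq eps hj} and \eqref{app effective eq} — both immediate from comparison — to pass from each $g_n$ to $g$ on the full-measure set $\Omega_g\coloneqq\Omega_0$. The hard part will be the perturbed test function argument in Step 2: because $w^\eps$ is only a viscosity solution and not smooth, $\phi^\eps$ is not an admissible classical test function. A rigorous implementation requires either a doubling-of-variables argument in $(t,x)$ or reinterpreting $u^\eps-w^\eps$ as a viscosity subsolution of the explicit $(\theta,w^\eps)$-shifted Hamilton-Jacobi equation derived from the equations of $u^\eps$ and $w^\eps$; in either case, the convergences $w^\eps\to 0$ and $D_x\phi(t_\eps,x_\eps)\to\theta$ are what close the inequality $c+\overline H(\theta)\le 0$.
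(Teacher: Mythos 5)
Your proposal is correct and follows the paper's own strategy step for step: the same barrier comparisons give (H1)--(H2) for $\overline H$, the same Evans perturbed test-function method with half-relaxed limits gives convergence for Lipschitz initial data, and $L^\infty$-contraction of the solution semigroups extends this to $\UC(\M)$; your explicit passage to the universal full-measure set $\Omega_0 \coloneqq \bigcap_{\theta\in\Q^d}\Omega_\theta$ and its extension to all $\theta\in\M$ via the deterministic Lipschitz bounds is exactly the right (and in the paper only implicit) bookkeeping. The one place worth calibrating is how you close the corrector step. Rather than forming $u^\eps - w^\eps$ (subsolution minus supersolution, which genuinely requires a doubling argument to exploit) or attempting to locate a maximum of the non-$C^1$ function $u^\eps - \phi^\eps$, the paper shows directly that $\phi^\eps \coloneqq \phi + w^\eps$ is a viscosity supersolution of \eqref{eq eps hj} on a small cylinder $V_r$ about $(t_0,x_0)$. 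This is a clean viscosity statement because a $C^1$ subtangent $\psi$ to $\phi + w^\eps$ yields a $C^1$ subtangent $\psi - \phi$ to $w^\eps$, so $\phi^\eps$ inherits viscosity structure from $w^\eps$ with no doubling; the residual terms are then absorbed by (H2), by the strict inequality $\partial_t\phi(t_0,x_0) + \overline H(\theta) > 2\delta$, and by the fact that $D_x\phi$ stays close to $\theta = D_x\phi(t_0,x_0)$ on $V_r$ when $r$ is small. A local comparison principle on $V_r$ then gives $\sup_{V_r}(u^\eps - \phi^\eps) \leqslant \max_{\partial V_r}(u^\eps - \phi^\eps)$, and passing to the $\limsup$ as $\eps\to 0^+$ contradicts the strict supertangency of $\phi$ at $(t_0,x_0)$ --- no maximum point $(t_\eps,x_\eps)$ is ever needed.
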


This reduction argument, which is essentially of deterministic nature, was already contained in the pioneering work~\cite{LPV} on periodic homogenization, at least as far as first-order HJ equations are concerned. 
This holds, in fact, even in the case when equation \eqref{eq eps hj} presents an additional (vanishing) diffusive and possibly degenerate term. 
A proof of this can be found in~\cite{DK17} and is given for Hamiltonians that are coercive in the $p$-variable. 
Such a class does not {\bluee cover} the kind of Hamiltonians we consider here, as pointed out in Remark~\ref{oss:noncoercive H}. 
Yet, the extension follows by arguing as in~\cite{DK17}, with the only difference that one has to use a different Comparison Principle, namely Theorem~\ref{appA: teo comp2}, in place of \cite[Proposition 2.4]{DK17}.  
We refer the reader to \Cref{Section: Reduction} for the detailed argument.
\smallskip

Theorem~\ref{teo reduction} yields, in particular, that the effective Hamiltonian $\overline H$ is identified by the following almost sure limit:
\[
	\overline H(\theta) \coloneqq - \lim_{\epsilon \to 0} \tilde u^\epsilon_\theta(1, 0, \omega)
	\qquad\hbox{for every fixed $\theta \in \M$}.
\]

The second step in the reduction consists in observing that, in order to prove the local uniform convergence required to apply \Cref{teo reduction}, it is enough to prove it for a fixed value of the time variable, that we chose equal to $1$. 

\begin{lemma}
\label{lemma reduction}
	Let $\omega \in \Omega$ and $\theta \in \R^d$ be fixed, and assume that
	\begin{equation}\label{t1}
		\limsup_{\epsilon\to 0^+} \sup_{y\in B_R} |\tilde u^\epsilon_\theta(1, y,\omega) - \langle \theta, y \rangle + \overline{H}(\theta)| = 0
		\qquad \hbox{for every $R>0$.}
	\end{equation}	
	Then, for every $T > 0$,  
	\begin{equation}\label{claim distilled}
		\tilde u_\theta^\epsilon(t,x,\omega) \ucv \langle \theta, x \rangle - t \overline{H}(\theta)
		\quad\text{in }\cTcyl.
	\end{equation}
\end{lemma}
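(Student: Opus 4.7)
\medskip

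The plan is to exploit the scaling invariance of equation \eqref{eq eps hj} when the initial datum is linear, together with a simple time-boundary layer estimate coming from the growth condition (H1). The key observation is the following scaling identity: if $\tilde u^\eps_\theta$ solves $\partial_t u + H(x/\eps, D_x u, \omega) = 0$ with $u(0,\cdot) = \langle\theta,\cdot\rangle$, then for every $s > 0$ the function $v(t,x) \coloneqq \frac{1}{s}\tilde u^\eps_\theta(st, sx, \omega)$ solves the same type of equation but with $\eps$ replaced by $\eps/s$, and still starts from $\langle\theta,\cdot\rangle$ because of the homogeneity of the initial datum. By uniqueness (\Cref{appA: teo comp2}), this means
\[
	\tilde u^\eps_\theta(t, x, \omega) \;=\; t\,\tilde u^{\eps/t}_\theta\!\left(1, \tfrac{x}{t}, \omega\right)
	\qquad \text{for every } t > 0.
\]

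Fix $R > 0$ and $t_0 \in (0, T)$, and restrict to $(t,x) \in [t_0,T]\times B_R$. Then $x/t \in B_{R/t_0}$ and $\eps/t \in (0, \eps/t_0]$, so $\eps/t \to 0$ uniformly as $\eps \to 0^+$. Applying hypothesis \eqref{t1} with radius $R/t_0$, we get
\[
	\tilde u^\eps_\theta(t,x,\omega) - \langle\theta,x\rangle + t\,\overline H(\theta)
	\;=\; t\Big[\tilde u^{\eps/t}_\theta\!\left(1,\tfrac{x}{t},\omega\right) - \big\langle\theta,\tfrac{x}{t}\big\rangle + \overline H(\theta)\Big],
\]
and the right-hand side vanishes uniformly in $(t,x) \in [t_0,T]\times B_R$ as $\eps\to 0^+$.

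It remains to absorb the boundary strip near $t=0$. Using (H1), the functions $(t,x)\mapsto \langle\theta,x\rangle \pm \beta(1+|\theta|)\,t$ are, respectively, a super- and sub-solution of \eqref{eq eps hj}; by the comparison principle \Cref{appA: teo comp2} applied to $\tilde u^\eps_\theta$,
\[
	\bigl|\tilde u^\eps_\theta(t,x,\omega) - \langle\theta,x\rangle\bigr| \;\leqslant\; \beta(1+|\theta|)\,t
	\qquad \text{for all } (t,x) \in \cTcyl,
\]
uniformly in $\eps > 0$ and $\omega \in \Omega$. Combining this with the trivial estimate $|t\,\overline H(\theta)| \leqslant T\,|\overline H(\theta)|$, we obtain a constant $C = C(\theta,\beta)$ such that
\[
	\bigl|\tilde u^\eps_\theta(t,x,\omega) - \langle\theta,x\rangle + t\,\overline H(\theta)\bigr| \;\leqslant\; C\,t
	\qquad \text{for all } (t,x) \in \cTcyl.
\]
Given $\eta > 0$, choose $t_0 \in (0,T)$ so that $C t_0 < \eta$; on $[0,t_0]\times B_R$ the error is already below $\eta$, while on $[t_0,T]\times B_R$ the scaling step gives an error below $\eta$ for all sufficiently small $\eps$. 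This yields \eqref{claim distilled}.

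There is no serious obstacle beyond writing the scaling identity cleanly and invoking the correct comparison principle in the non-coercive setting; the rest is bookkeeping. The only subtle point is that one must use the form of the comparison principle valid for the class $\Ham$ (namely \Cref{appA: teo comp2}), since the Hamiltonian is not assumed to be coercive.
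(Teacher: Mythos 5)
Your proof is correct and follows essentially the same route as the paper's: the scaling identity $\tilde u^\eps_\theta(t,x,\omega) = t\,\tilde u^{\eps/t}_\theta(1,x/t,\omega)$ reduces the claim to hypothesis \eqref{t1} on $t \geqslant t_0$, and a boundary-layer estimate handles $t \in [0,t_0]$. The only difference is in the boundary-layer estimate: you derive $|\tilde u^\eps_\theta(t,x,\omega)-\langle\theta,x\rangle|\leqslant \beta(1+|\theta|)t$ from (H1) via the barriers $\langle\theta,x\rangle\pm\beta(1+|\theta|)t$ and \Cref{appA: teo comp2}, whereas the paper invokes the Lipschitz bound of \Cref{prop properties value function}-(i) coming from the game-theoretic representation (with constant $\|\ell\|_\infty+\|f\|_\infty$); your variant is marginally more self-contained since it stays at the level of (H1)--(H3) and does not appeal to the max-min structure.
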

 
\begin{proof}
	Since $\omega$ will remain fixed throughout the proof, we will omit it from our notation. 
	Let us fix $\theta \in \R^d$.  	
	We first take note of the following scaling relations
	\[
		\tilde u^\epsilon_\theta(t, x) 
			= \eps \tilde u^1_\theta(t / \epsilon, x / \epsilon)
			= t (\epsilon / t) \tilde u^1_\theta(t / \epsilon, x / \epsilon)
			= t  \tilde u^{\epsilon/t}_\theta(1, x / t) 
			\qquad\text{for all $t > 0$ and $x \in \R^d$}.
	\]
	Fix $T > 0$. 
	Then, for every fixed $r \in (0, T)$, we obtain
	\begin{align}\label{inequality distilled}
		\sup_{r \leqslant t \leqslant T} \sup_{y \in B_R} \left | \tilde u^\epsilon_\theta(t, y) - \langle \theta, y \rangle + t \overline{H}(\theta) \right|
			&= \sup_{r \leqslant t \leqslant T} \; \sup_{y \in B_R} 
			\left | t \big(\tilde u^{\epsilon / t}_\theta(1, y / t) - \langle \theta, y / t \rangle + \overline{H}(\theta)\big) \right | \nonumber \\
			&\leqslant T \sup_{\epsilon / T \leqslant \eta \leqslant \epsilon / r} \; \sup_{z \in B_{R / r}} \left | \tilde u^\eta_\theta(1, z) - \langle \theta, z \rangle + \overline{H}(\theta) \right | \,.
	\end{align}
	By \eqref{t1}, the right-hand side goes to $0$ as $\epsilon\to 0^+$. 
	On the other hand, in view of Proposition~\ref{prop properties value function}-(i) and of the fact that $\tilde u^\epsilon_\theta(0, x) = \langle \theta, x \rangle$ for all $x \in \M$, we have
	\begin{align*}
		\sup_{0 \leqslant t \leqslant r} \; \sup_{y \in \R^d} \left | \tilde u^\epsilon_\theta(t, y) - \langle \theta, y \rangle + t \overline{H}(\theta) \right |
			&\leqslant r |\overline H(\theta)| + \sup_{0 \leqslant t \leqslant r} \; \sup_{y\in \R^d} \left | \tilde u^\epsilon_\theta(t, y) - \langle \theta, y \rangle \right | \\
			&\leqslant r \left(|\overline H(\theta)| + \|\ell\|_\infty + \|f\|_\infty\right) \,.
	\end{align*}
	Assertion \eqref{claim distilled} follows from this and \eqref{inequality distilled} by the arbitrariness of the choice of $r \in (0, T)$. 
\end{proof}

In order to simplify some arguments, we find it convenient to work with solutions with zero initial datum. 
We can always reduce to this case, without any loss of generality, by setting $u^\eps_\theta(t, x, \omega) \coloneqq \tilde u^\eps_\theta(t, x, \omega) - \langle \theta, x \rangle$ for all $(t, x, \omega) \in \cTcyl \times \Omega$. 
The function $u^\eps_\theta$ is the unique continuous function which solves equation \eqref{eq eps hj} with $H_\theta \coloneqq H(\cdot, \theta + \cdot, \omega)$ in place of $H$ and which satisfies the initial condition $u^\eps_\theta(0, x, \omega) = 0$ for all $(x, \omega) \in \M \times \Omega$. 
Note that the Hamiltonian $H_\theta$ is still given by the max-min formula \eqref{def H} where $\ell$ is replaced by $\ell_\theta(x, a, b, \omega) \coloneqq \ell(x, a, b, \omega) + \langle f(a, b), \theta \rangle$. 
Furthermore, $\ell_\theta$ satisfies the same conditions ($\ell_1$)--($\ell_4$).
\smallskip 

The last reduction remark consists in noticing that the following rescaling relation holds
\[
	u^\epsilon_\theta(1, x, \omega) 
		= \epsilon  u_\theta(1 / \epsilon, x / \epsilon, \omega)
		\qquad\text{for all $x \in \R^d$ and $\eps > 0$,}
\]	
where we have denoted by $u_\theta$ the function $u^\eps_\theta$ with $\eps = 1$.
\smallskip

In the light of all this, the proof of \Cref{thm:genhom} is thus reduced to show that,  for every fixed $\theta \in \M$, there exists a set $\Omega_\theta$ of probability $1$ such that, for every $\omega \in \Omega_\theta$, we have 
\begin{equation}\label{claim homogenization}
	\limsup_{\epsilon \to 0^+} \; \sup_{y \in B_R} | u^\epsilon_\theta(1, y, \omega) + \overline{H}(\theta) | 
	= 
	\limsup_{t \to +\infty} \; \sup_{y \in B_{t R}} \left | \frac{u_\theta(t, y, \omega)}{t} + \overline{H}(\theta)\right |
	=
	0
	\quad
	\hbox{for all $R > 0$,}
\end{equation}	
for some deterministic function $\overline H \colon \M \to \R$.
\smallskip

\section{Probabilistic concentration} \label{sec:concentration}

In this section we shall prove, by making use of Azuma's  martingale inequality, that $u_{\theta}(t, 0, \cdot)$ is concentrated. 
For this, we will take advantage of the fact that $u_\theta$ can be expressed via the Differential Game Theoretic formula \eqref{def value function} with initial datum $g \equiv 0$ and running cost $\ell_\theta(x, a, b, \omega) \coloneqq \ell(x, a, b, \omega) + \langle f(a, b), \theta \rangle$. 
Here is where assumptions \eqref{def R range dependence} and $(f)$ play a crucial role, by ensuring altogether that the solution of \eqref{eq eps hj} is robust with respect to local perturbations of the cost function $\ell$. 
\smallskip

{Throughout this section, we will weaken the conditions on the running cost $\ell \colon \R^d \times A \times B \times \Omega \to \R$ and assume that $\ell$ is only jointly measurable and may therefore fail to satisfy conditions ($\ell_1$)-($\ell_2$).}
In particular, no continuity and stationarity conditions with respect to $x$ will be required. 
\smallskip

We start by recalling a classic theorem on concentration of martingales, also known as Azuma's  inequality.
\begin{lemma}[Concentration of martingales \cite{azuma1967WeightedSumsCertain, hoeffding1963ProbabilityInequalitiesSums}]
\label{Result: Concentration of martingales}
    Let $(X_n)_{n \in \NN}$ be a martingale and $(c_n)_{n \in \NN}$ a real sequence such that, for all $n \in \NN$, $|X_n - X_{n + 1}| \leqslant c_n$ almost surely. 
    Then, for all $n \in \NN$ and $M > 0$,
    \[
        \PP( |X_n - X_0| \geqslant M ) \leqslant 2 \exp \left( \frac{-M^2}{2 \sum_{m = 0} ^{n-1} c_m^2} \right) \,.
    \]
\end{lemma}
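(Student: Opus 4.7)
The plan is to derive the bound by the standard Chernoff/exponential moment method applied to the martingale differences. Let $(\F_n)_{n \in \NN}$ denote the filtration to which $(X_n)$ is adapted, and set $Y_k \coloneqq X_{k+1} - X_k$. By the martingale property, $\E[Y_k \mid \F_k] = 0$, and by hypothesis $|Y_k| \leqslant c_k$ almost surely. First I would prove the one-sided estimate $\PP(X_n - X_0 \geqslant M) \leqslant \exp(-M^2 / (2 \sum_{m=0}^{n-1} c_m^2))$, and then obtain the two-sided bound by applying the same argument to the martingale $(-X_n)$ and using the union bound, which is precisely the source of the factor $2$.

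The key technical ingredient is Hoeffding's lemma in its conditional form: for any $\lambda \in \R$ and any random variable $Y$ with $\E[Y \mid \F] = 0$ and $|Y| \leqslant c$ almost surely,
\[
	\E\left[e^{\lambda Y} \mid \F\right] \leqslant \exp\!\left(\frac{\lambda^2 c^2}{2}\right).
\]
I would prove this by writing $Y = \frac{c - Y}{2c}(-c) + \frac{c + Y}{2c}(c)$ and using the convexity of $t \mapsto e^{\lambda t}$ on $[-c, c]$ to bound $e^{\lambda Y}$ by the corresponding convex combination of $e^{-\lambda c}$ and $e^{\lambda c}$. Taking conditional expectation (the centering kills the linear term), we are left with $\cosh(\lambda c)$, and a Taylor expansion of $\log \cosh$ gives $\cosh(\lambda c) \leqslant e^{\lambda^2 c^2 / 2}$.

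Next, applying this conditionally to $Y_k$ and iterating the tower property gives
\[
	\E\!\left[e^{\lambda (X_n - X_0)}\right] = \E\!\left[e^{\lambda (X_{n-1} - X_0)} \,\E[e^{\lambda Y_{n-1}} \mid \F_{n-1}]\right] \leqslant \exp\!\left(\frac{\lambda^2}{2}\sum_{m=0}^{n-1} c_m^2\right).
\]
Markov's inequality then yields, for every $\lambda > 0$,
\[
	\PP(X_n - X_0 \geqslant M) \leqslant e^{-\lambda M} \E\!\left[e^{\lambda(X_n - X_0)}\right] \leqslant \exp\!\left(-\lambda M + \frac{\lambda^2}{2}\sum_{m=0}^{n-1} c_m^2\right).
\]
Optimizing the right-hand side by taking $\lambda = M / \sum_{m=0}^{n-1} c_m^2 > 0$ gives the one-sided bound $\exp(-M^2 / (2\sum_{m=0}^{n-1} c_m^2))$. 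Repeating with $-Y_k$ in place of $Y_k$ handles the other tail, and summing the two estimates produces the factor $2$.

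There is no real obstacle here since this is the classical Azuma--Hoeffding argument cited from the references; the only point requiring some care is ensuring that the exponential moment bound is legitimately applied conditionally (so that one can iterate) rather than just in expectation, which is handled via the tower property as indicated above.
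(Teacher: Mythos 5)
The paper does not prove this lemma; it is stated as a known result with citations to Hoeffding (1963) and Azuma (1967), and the proof is omitted. Your argument is the standard and correct one for the Azuma--Hoeffding inequality: the conditional Hoeffding lemma via convexity and $\cosh(\lambda c)\leqslant e^{\lambda^2 c^2/2}$, iteration through the tower property, Markov's inequality, optimization in $\lambda$, and a union bound for the two-sided estimate (which is where the factor $2$ comes from). This matches the proof in the cited sources, and there is nothing in the paper to compare against.
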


The probabilistic concentration result mentioned before is stated as follows.
\begin{proposition} 
\label{prop:concentration}
	There exists a constant $c=c(\rho,\delta, \Lip(\ell),\left\|f\right\|_\infty)>0$, only depending on $\rho$, $\delta$, $\Lip(\ell)$ and $\left\|f\right\|_\infty$, such that, for all $M > 0$ and $t \geqslant 1$,
	\begin{equation*}
		\m{P} \left( |u_{\theta}(t, 0,\cdot) - U_\theta(t)| \geqslant M \sqrt{t} \right) \leqslant \exp \left( -c M^2 \right) \,,
	\end{equation*}
	where $U_\theta(t) \coloneqq \EE[u_{\theta}(t, 0, \cdot)]$ denotes the expectation of the random variable  $u_{\theta}(t, 0, \cdot)$.
\end{proposition}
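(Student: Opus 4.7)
The plan is to apply Azuma's inequality (Lemma~\ref{Result: Concentration of martingales}) to a Doob martingale obtained by progressively revealing the random cost $\ell$ along slabs perpendicular to the preferred direction $e$. Using the differential-game representation of $u_\theta$ (see Appendix~\ref{app:PDE}) together with the oriented dynamics hypothesis $(f)$, every admissible trajectory $y(\cdot)$ of the game starting from the origin satisfies $\frac{d}{ds}\langle y(s), e\rangle = \langle f(a(s), b(s)), e\rangle \in [\delta, \|f\|_\infty]$, and therefore $\langle y(s), e\rangle \in [\delta s, \|f\|_\infty s]$ for every $s \in [0, t]$. In particular, the whole reachable set $\Sigma_t$ of such trajectories is contained in $\{x \in \R^d : 0 \leqslant \langle x, e\rangle \leqslant \|f\|_\infty t\} \cap \overline{B}_{\|f\|_\infty t}$, and every admissible trajectory spends total time at most $L/\delta$ inside any slab of $e$-width $L$.

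Fix a constant $L = L(\rho) > \rho$ depending only on $\rho$ and partition $\R^d$ into the slabs
\[
    S_k := \{x \in \R^d : \langle x, e\rangle \in [(k-1)L, kL)\},
    \qquad k \geqslant 1.
\]
Since $u_\theta(t, 0, \omega)$ depends only on $\ell|_{\Sigma_t}$, it is measurable with respect to $\ell|_{S_1 \cup \cdots \cup S_N}$ with $N := \lceil \|f\|_\infty t / L \rceil \leqslant C_1 t$ for $t \geqslant 1$. Introduce the filtration
\[
    \F_k := \sigma\bigl( \ell(x, \cdot, \cdot, \cdot) : x \in S_1 \cup \cdots \cup S_k \bigr)
\]
and the Doob martingale $X_k := \E[u_\theta(t, 0, \cdot) \mid \F_k]$, so that $X_0 = U_\theta(t)$ and $X_N = u_\theta(t, 0, \cdot)$.

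The key deterministic ingredient is a \emph{stability estimate}: for any two $\Lip(\ell)$-Lipschitz realizations $\ell^1, \ell^2$ that agree on $\R^d \setminus S_k$, continuity forces $\ell^1 = \ell^2$ on $\partial S_k$; since every $x \in S_k$ lies within $e$-distance $L$ of $\partial S_k$, this gives $|\ell^1 - \ell^2| \leqslant 2 \Lip(\ell) L$ pointwise on $S_k$. Combined with the time-in-slab bound $L/\delta$, this upgrades to $|u_\theta^{\ell^1}(t, 0) - u_\theta^{\ell^2}(t, 0)| \leqslant 2 \Lip(\ell) L^2/\delta$. Hypothesis $(\ell_4)$ now enters: since $L > \rho$, slabs with index gap at least $2$ are independent, which permits a resampling coupling transferring the deterministic bound into $|X_k - X_{k-1}| \leqslant c_L$ almost surely, with $c_L = O(L^2)$. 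Azuma's inequality then yields
\[
    \P\bigl( |u_\theta(t, 0, \cdot) - U_\theta(t)| \geqslant M \sqrt{t} \bigr)
    \leqslant 2 \exp\!\left( \frac{-M^2 t}{2 N c_L^2} \right)
    \leqslant 2 \exp(-\tilde c M^2),
\]
since $N c_L^2 \leqslant C_2 t L^3$. Fixing $L$ as a constant depending only on $\rho$, and absorbing the factor $2$ into the exponent, produces the constant $c = c(\rho, \delta, \Lip(\ell), \|f\|_\infty) > 0$ required in the statement.

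The hardest point will be the passage from the deterministic stability estimate to the almost-sure bound on the martingale differences $|X_k - X_{k-1}|$. Finite range of dependence $(\ell_4)$ gives independence of slabs separated by more than $\rho$, but adjacent slabs remain correlated, so the resampling coupling at the basis of the bounded-difference argument must be carried out with care: one has to exploit the fact that $\ell|_{\bigcup_{j \geqslant k+1} S_j}$ and $\ell|_{\bigcup_{j \leqslant k-1} S_j}$ are independent (since they are separated by the slab $S_k$ of width $L > \rho$) in order to decouple the future slabs from the specific realization of $\ell|_{S_k}$ being revealed when passing from $\F_{k-1}$ to $\F_k$.
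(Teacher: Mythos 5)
Your overall strategy — Doob martingale along slabs perpendicular to $e$, a deterministic stability estimate resting on the time-in-slab bound $\leqslant L/\delta$ from the oriented dynamics, then Azuma — is exactly the paper's strategy, and the deterministic pieces (reachable set, time-in-slab, $\|\ell^1-\ell^2\|_\infty = O(\Lip(\ell)L)$ on the slab, hence $|u^{\ell^1}-u^{\ell^2}| = O(\Lip(\ell)L^2/\delta)$ via the analogue of \Cref{Result: Deterministics payoff dependence on strips of the $T$-horizon value}) are fine. The gap is in the step you yourself flag as the hardest: passing from the deterministic stability estimate to the almost-sure bound on $|X_k - X_{k-1}|$. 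As written, this step does not go through.

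The problem is that the deterministic estimate controls how $u_\theta$ varies as you change $\ell|_{S_k}$ \emph{while holding $\ell$ elsewhere fixed}, whereas $X_k - X_{k-1} = \E[u_\theta\mid\F_k] - \E[u_\theta\mid\F_{k-1}]$ also reflects how revealing $\ell|_{S_k}$ \emph{updates the conditional law of the future} $\ell|_{\cup_{j>k}S_j}$. FRD gives $\ell|_{\text{past}}\indep\ell|_{\text{future}}$ across the slab $S_k$, but it says nothing about independence of $\ell|_{S_k}$ from its neighbours, so this posterior update is genuinely uncontrolled. A toy counterexample: take $Y$ Bernoulli and let the random cost be a small bump $\ell(x)=Y\,\phi(\langle x,e\rangle - kL)$ with $\phi$ supported in $(-\rho/2,\rho/2)$, so the bump straddles the interface between $S_k$ and $S_{k+1}$; FRD holds, and a functional $u$ of $\ell|_{S_{k+1}}$ alone has zero stability constant in $S_k$, yet $\E[u\mid\F_k]-\E[u\mid\F_{k-1}]$ is of order $1$ because observing $\ell$ in $S_k$ determines $Y$. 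So ``stability estimate $+$ past-future independence $\Rightarrow$ bounded increments'' is simply false in general, and your resampling coupling cannot be made to close on the filtration $\F_k$ as defined.

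What the paper does instead supplies the missing ingredient. Rather than trying to control $u_\theta$ directly, it constructs a proxy $\widehat{u}_\theta$ built from a modified cost $\widehat{\ell}$ obtained by translating $\ell$ by $-2\rho e$ inside the buffer $S_{\rho r,\rho(r+2)}$. By design $\widehat{\ell}$, hence $\widehat{u}_\theta(t,0,\cdot)$, is measurable with respect to $\sigma\big(\ell:\langle\cdot,e\rangle\leqslant\rho r\big)\vee\sigma\big(\ell:\langle\cdot,e\rangle\geqslant\rho(r+2)\big)$: the first factor lies in $\F_r$, and the second is independent of $\F_{r+1}$ by FRD because the buffer has width $>\rho$. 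This gives the \emph{exact} identity $\E[\widehat{u}_\theta\mid\F_{r+1}]=\E[\widehat{u}_\theta\mid\F_r]$, after which the triangle inequality and \Cref{Result: Deterministics payoff dependence on strips of the $T$-horizon value} yield $|X_{r+1}-X_r|\leqslant 2\|u_\theta-\widehat{u}_\theta\|_\infty\leqslant 8\rho^2\Lip(\ell)/\delta$. Your argument needs this same move: build a version of $u_\theta$ that is blind to a strip of width $>\rho$ (you can use your continuity/Lipschitz observation to bound $\|u_\theta-\widehat{u}_\theta\|_\infty$, but some explicit modification of $\ell$ on the strip is required, and the strip must be wide enough that the remaining ``future'' is $>\rho$ away from $\F_{r+1}$). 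Without that construction the increment bound, and hence the application of Azuma, is not justified.
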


\begin{remark}
We have tailored the proof of Proposition \ref{prop:concentration} in such a way that the constant $c$ appearing in the statement does not depend on  $\|\ell\|_\infty$. This is crucial in view of the homogenization result provided in Theorem \ref{thm:genhom2}. 
\end{remark}

To prove \Cref{prop:concentration}, we need a technical lemma first. 
The result is deterministic, hence the dependence on $\omega$ will be omitted. 
It expresses that, thanks to the condition ($f$) on the dynamics, we can control the variation of the value function as we change the running cost on a strip that is orthogonal to the direction $e$. 

\begin{lemma} \label{lem:strip}
\label{Result: Deterministics payoff dependence on strips of the $T$-horizon value}
	For any given pair $R < \widehat{R}$ in $\R$, let us define the strip between $R$ and $\widehat{R}$ as follows:
    \[
        S_{R, \widehat{R}} \coloneqq \{ x : \langle x, e\rangle \in [R, \widehat{R}] \} \,.
    \]
	Let $\ell, \widehat{\ell} \colon \RR^d \times A \times B \to \RR$ be Borel--measurable bounded running costs and let $f \colon A \times B \to \M$ be a continuous { vector--valued function} satisfying condition ($f$). 
	For every fixed $\theta \in \R^d$, let us denote by $u_\theta(t, x)$, $\widehat u_\theta(t, x)$ the value functions defined via \eqref{def value function} with initial datum $g \equiv 0$ and running cost $\ell_\theta(x, a, b, \omega) \coloneqq \ell(x, a, b, \omega) + \langle f(x, a, b), \theta \rangle$ and $\widehat{\ell}_\theta(x, a, b, \omega) \coloneqq \widehat\ell(x, a, b, \omega) + \langle f(x, a, b), \theta \rangle$, respectively.
	If $\ell = \widehat{\ell}$ on $\left( \M \setminus S_{R, \widehat{R}} \right) \times A \times B$, we have  
    \[
        |u_\theta(t, x) - \widehat{u}_\theta(t, x) | 
        	\leqslant \frac{\widehat{R}-R}{\delta} \| \ell - \widehat{\ell} \|_\infty
		\qquad \hbox{for all $(t, x) \in \cyl$.}
    \]
\end{lemma}

We point out the generality of the previous statement: no continuity conditions on the running costs $\ell,\,\widehat\ell$ are assumed in the above statement, and the functions $u_\theta,\,\widehat u_\theta$ are still well-defined.

\begin{proof}[Proof of Lemma \ref{lem:strip}]
    Fix $t > 0$ and $x, \theta \in \M$. 
    Let $R < \widehat{R}$ be arbitrary in $\R$ and consider the strip $S_{R, \widehat{R}}$.
	Fix controls $\alpha \in \Gamma(t)$ and $b \in \BB(t)$ and consider the solution $y_x \colon [0, t] \to \M$ of the ODE 
	\begin{eqnarray*}
	   	\begin{cases}
	   		\dot y_x(s) = f(\alpha[b](s), b(s))\qquad\hbox{in $[0, t]$} \\
	   		y_x(0) = x.
	   	\end{cases}
	\end{eqnarray*}  
	From the orientation of the game, the map $s \mapsto \langle y_x(s), e\rangle$ is strictly increasing in $[0, t]$. 
	More precisely, 
	\begin{equation}\label{eq strict monotonicity}
		\dfrac{d}{ds} \langle y_x(s), e \rangle 
			= \langle \dot y_x(s), e \rangle 
			= \langle f(\alpha[b](s), b(s)), e \rangle 
			\geqslant \delta
			\qquad \hbox{for all $s \in [0, t]$}.
	\end{equation}
	If $ \langle y_x(0), e \rangle = \langle x, e \rangle \geqslant \widehat{R}$, we derive that the curve $y_x$ always lies in $\M \setminus S_{R, \hat R}$ and the assertion trivially follows since $\ell = \widehat{\ell}$ on $\left( \M \setminus S_{R, \widehat{R}} \right) \times A \times B$. 
	Let us then assume that $\langle y_x(0), e \rangle < \widehat{R}$ and define two exit times $t_1$ and $t_2$ as follows:
    \[
        t_1 \coloneqq \inf \left\{ s \in [0, t] : \langle y_x(s), e \rangle > R \right\}\,,\qquad 
        t_2 \coloneqq \sup \left\{ s  \in [0, t]  : R < \langle y_x(s), e \rangle < \widehat{R} \right\},
    \]
	where we agree that $t_1=t_2=t$ when the sets above are empty.  
	Notice that\ $t_2 - t_1 \leqslant (\widehat{R} - R) / {\delta}$.  
	Indeed, if $t_2 - t_1 > 0$, then $t_1 < t$ and, by continuity of $y_x$ and \eqref{eq strict monotonicity}, we have $\langle y_x(t_1), e \rangle = R$. 
	From \eqref{eq strict monotonicity}, we infer 
	 \begin{align*}
	     \widehat{R} - R 
	     	\geqslant \langle y_x(t_2) - y_x(t_1), e \rangle 
	        = \int_{t_1}^{t_2} \langle f(\alpha[b](s),b(s)), e \rangle \, ds 
	        \geqslant (t_2 - t_1) \delta \,,
	 \end{align*}
	as it was claimed.
	Also, if $t_2 < t$, then, from \eqref{eq strict monotonicity}, we have that $\langle y_x(s), e \rangle > \widehat{R}$ for every $s \in (t_2, t)$. 
	Consider deterministic running costs $\ell, \widehat{\ell} \colon \RR^d \times A \times B \to \RR$ such that $\ell = \widehat{\ell}$ on $\left( \M \setminus S_{R, \widehat{R}} \right) \times A \times B$.
	Then, in view of the previous remarks, we get 
	\begin{align*}
		\Big|
			\int_0^t \Big( \ell_\theta( y_x(t), \alpha(t), \beta(t)) &- \widehat{\ell}_\theta( y_x(t), \alpha(t), \beta(t))\Big) \, ds
		\Big| \\
			&= \Big|
				\int_{t_1}^{t_2} \Big(\ell( y_x(t), \alpha(t), \beta(t)) - \widehat{\ell}( y_x(t), \alpha(t), \beta(t))\Big)\, ds
			\Big|  \\        
			& \leqslant (t_2 - t_1) \|\ell - \widehat{\ell}\|_\infty  
 			\leqslant \frac{\widehat{R} - R}{\delta} \|\ell - \widehat{\ell}\|_\infty \,.
	\end{align*}
	The assertion easily follows from this by arbitrariness of the choice of the control $b \in \BB(t)$ and of the strategy $\alpha \in \Gamma(t)$. 
\end{proof}

In the proof of \Cref{prop:concentration}, we look at the conditional expectation of $u_\theta(t, 0, \cdot)$ given the running cost in a {half-space that contains the origin and whose boundary is a hyperplane orthogonal to $e$. Considering a suitable increasing sequence of such half--spaces, we define the corresponding martingale and, by \Cref{Result: Deterministics payoff dependence on strips of the $T$-horizon value}, observe that it has bounded differences. Then, applying Azuma's inequality, see \Cref{Result: Concentration of martingales}, we conclude.}

\begin{proof}[Proof of Proposition~\ref{prop:concentration}]
	Fix $t > 0$. 
	Recall the long-range independence parameter $\rho \ge 1$.
	Denote $n \coloneqq \lceil t \rceil$ and $C \coloneqq \lceil \| f \|_\infty / \rho \rceil$, where $\lceil \cdot \rceil$ stands for the upper integer part.
	Note that, for all $\alpha\in\Gamma(t)$ and $b \in \BB(t)$, the solution $y_0 \colon [0, t] \to \M$ of the ODE 
	\begin{eqnarray*}
		\begin{cases}
			\dot y_0(s) = f(\alpha[b](s), b(s))
				\qquad\hbox{in $[0,t]$} \\
			y_0(0) = 0
		\end{cases}
	\end{eqnarray*}	
	satisfies that $|y_0(s)| \leqslant \rho\, C \, n$ for all $s \in [0, t]$.
	For $r \in \{1, 2, \ldots, C n \}$, let $\mathcal{F}_r$ be the $\sigma$-algebra generated by the random variables $\{ \ell(x, a, b, \cdot) : a \in A, b \in B,  {x \in \RR^d, }\langle x, e \rangle \leqslant \rho r \}$, and let $\mathcal{F}_0$ be the trivial $\sigma$-algebra. 
	We claim that, for all $0 \leqslant r < C n$, we have that
	\begin{equation} \label{eq_increment}
		\left| \EE [ u_\theta(t, 0, \cdot) | \mathcal{F}_{r + 1} ] - \EE [ u_\theta(t, 0, \cdot) | \mathcal{F}_r ] \right| 
			\leqslant \frac{12 \rho^2}{\delta} \Lip(\ell) \,.
	\end{equation}
	Indeed,  {for $R < \widehat{R}$ in $\R$, define the strip}
    \[
        S_{R, \widehat{R}} \coloneqq \{ x \in \M : \langle x, e\rangle \in [R, \widehat{R}] \} \,.
    \]
     {Fix $0 \leqslant r < C n$, }and consider {$\widehat{\ell}$ defined by  {$\widehat{\ell}(x, a, b, \omega) \coloneqq \ell(x - 3 \rho e, a, b, \omega)$ for $(x, a, b, \omega) \in S_{\rho (r - 1), \rho(r + 2)} \times A \times B \times \Omega$}, and $\widehat{\ell}(x, a, b, \omega) \coloneqq \ell(x, a, b, \omega)$ otherwise.}
	
    {On the one hand}, by \Cref{Result: Deterministics payoff dependence on strips of the $T$-horizon value}, almost surely we have that
	\begin{align}
		\label{Equation: distance between solutions}
		\left| u_\theta(t, 0, \omega) - \widehat{u}_\theta(t, 0, \omega) \right| 
			\leqslant \frac{ {3} \rho}{\delta} \| \ell - \widehat{\ell} \|_\infty 
            \leqslant \frac{ {3} \rho}{\delta} \Lip(\ell) 2 \rho 
			= \frac{ {6} \rho^2}{\delta} \Lip(\ell).
	\end{align}
	where $u_\theta$ (resp. $\widehat{u}_\theta$) is the value function associated via \eqref{def value function} with the running cost $\ell_\theta \coloneqq \ell + \langle f(a, b), \theta \rangle$ (resp. $\widehat\ell_\theta \coloneqq \widehat \ell + \langle f(a, b), \theta \rangle$) and initial datum $g \equiv 0$.
    On the other hand, $\widehat{u}_\theta$ is independent of $\{ \ell(x, a, b, \cdot) : a \in A, b \in B, {x \in \RR^d, }\langle x, e \rangle \in [ \rho r,  \rho (r + 1)] \}$. 
    Indeed, by definition, 
    $\widehat{u}_\theta$ is measurable with respect to the $\sigma$-field generated by $\{ \widehat{\ell}(x, a, b, \cdot) : a \in A, b \in B, x \in \M \}$.
    Moreover, by long-range independence, $\widehat{\ell}(x, a, b, \cdot)$ is independent of $\{ \ell(x, a, b, \cdot) : a \in A, b \in B, {x \in \RR^d, }\langle x, e \rangle \in [ \rho r, \rho (r + 1)] \}$. 
    Therefore, 
    \begin{align}
		\label{Equation: conditional expectations of null solution}
		\EE \left[ \widehat{u}_\theta(t, 0, \cdot) \cond \mathcal{F}_{r + 1} \right]
			=\EE \left[ \widehat{u}_\theta(t, 0, \cdot) \cond \mathcal{F}_{r} \right] \,.
	\end{align}
	Finally, using successively \eqref{Equation: distance between solutions} and \eqref{Equation: conditional expectations of null solution}, we prove the claim in \eqref{eq_increment}:	
	\begin{align*}
		\bigl | \EE [ u_\theta(t, 0, \cdot) | \mathcal{F}_{r + 1}] - \EE[u_\theta(t, 0, \cdot) | \mathcal{F}_r]  \bigr | 
			&\leqslant 
			\bigl | \EE[\widehat{u}_\theta(t, 0, \cdot) | \mathcal{F}_{r + 1}] - \EE[\widehat{u}_\theta(t, 0, \cdot) | \mathcal{F}_r] \bigr | + \frac{12 \rho^2}{\delta} \Lip(\ell) \\
			&= \frac{12 \rho^2}{\delta} \Lip(\ell)  \,.
	\end{align*}
	For $r \leqslant C n$, denote $W_r \coloneqq \EE[ u_\theta(t, 0, \cdot) | \mathcal{F}_r ]$. 
	The process $(W_r)_{r \leqslant C n}$ is a martingale, and $W_0 = \EE[ u_\theta(t, 0, \cdot) ] = U_\theta(t)$. 
	Moreover, since $u_\theta(t, 0, \cdot)$ is $\mathcal{F}_{C n}$-measurable, we have that $W_{C n}(\cdot) = u_\theta(t, 0, \cdot)$. 
	By Azuma's inequality, see \Cref{Result: Concentration of martingales}, applied to the martingale $(W_r)_{r \leqslant C \, n}$ and \eqref{eq_increment}, for all $M \geqslant 0$,
	\begin{align*}
		\m{P}(|u_\theta(t, 0, \cdot) - U_\theta(t)| \geqslant M) 
			= \m{P}(|W_{C n} - W_0| \geqslant M) 
			\leqslant 2 \exp\left(- \frac{M^2}{2 C n \left( \frac{12 \rho^2}{\delta} \Lip(\ell) \right)^2 } \right) \,.
	\end{align*}
	Therefore, there exists a constant $c > 0$, only depending on $\rho$, $\delta$,  $\Lip(\ell)$, and $\left\|f\right\|_\infty$, but not on $\theta$, such that, for all $M > 0$ and $t \geqslant 1$, we have that 
	\begin{align*}
		\m{P}(|u_\theta(t, 0, \cdot) - U_\theta(t)| \geqslant M \sqrt{t})
			&\leqslant \exp\left(- c M^2 \right) \,,
	\end{align*}
	as it was asserted. 
\end{proof}

\section{Proof of the homogenization results} \label{sec:proof}

This section is devoted to the proofs of our homogenization results, namely Theorems~\ref{thm:genhom}, ~\ref{Result: Concentration property} and~\ref{thm:genhom2} and  Corollary~\ref{cor:quantitative estimate}. 
We will denote by $H$ a stationary Hamiltonian belonging to the class $\Ham$, and by $u_\theta(\cdot, \cdot, \omega)$ the solution of the equation \eqref{eq eps hj} with  $\eps = 1$, $H_\theta \coloneqq H(\cdot, \theta + \cdot\,, \omega)$ in place of $H$, and initial condition $u_\theta(0, x, \omega) = 0$ for all $(x, \omega) \in \M \times \Omega$.
\smallskip 

Note that, since the initial condition is zero, the function $u_\theta$ is stationary, i.e., for all $(t, x, \omega) \in \ccyl \times \Omega$ and $y \in \M$ we have $u_\theta(t, x + y, \omega) = u_\theta(t, x, \tau_y \omega)$, $\PP$--almost surely in $\Omega$.
\smallskip

We also recall that $U_\theta(t)$ denotes the expectation of the random variable $u_{\theta}(t, 0, \cdot)$.
\smallskip 

\subsection{Proof of Theorem~\ref{thm:genhom}} 

In this subsection, we will furthermore assume that $H$ belongs to the class $\Hamdg$, so that $u_\theta(t, x, \omega)$ can be represented via \eqref{def value function} with initial datum $g \equiv 0$ and running cost $\ell_\theta(x, a, b, \omega) \coloneqq \ell(x, a, b, \omega) + \langle f(x, a, b), \theta \rangle$. 
This allows us to make use of the results obtained in Section~\ref{sec:concentration}.  

According to Section~\ref{sec:outline}, the proof of Theorem~\ref{thm:genhom} boils down to establishing the following result. 

\begin{proposition} \label{prop convergence tool}
There exists a deterministic function $\overline H \colon \M \to \R$ such that, for every fixed $\theta \in \M$,  we have 
	\begin{equation}\label{claim concentration}
		\limsup_{\epsilon \to 0}\sup_{y\in B_R} |u^\epsilon_\theta(1, y, \omega)+ \overline{H}(\theta)| 
		= 
		\limsup_{t \to +\infty}\sup_{y\in B_{tR}} \left |\frac{u_\theta(t, y, \omega)}{t} + \overline{H}(\theta)\right|
		=
		0
		\quad
		\hbox{for all $R>0$,}
	\end{equation}	
for every $\omega$ in a set $\Omega_\theta$ of probability 1. 
\end{proposition}

Proposition \ref{prop convergence tool} is actually a consequence of the following stronger quantitative result. 

\begin{prop} 
\label{prop Concentration property}
There exists a deterministic function $\overline H \colon \M \to \R$ such that, for every fixed $\theta \in \M$ and $R>0$,  we have 
\begin{equation}\label{claim concentration inequality}
   \m{P}\left( \sup_{y\in B_{tR}} \left |\frac{u_\theta(t, y, \omega)}{t} + \overline{H}(\theta)\right| \geq 
   K  \left(\frac{\ln t}{t}\right)^{1/2} \right)  \leq t^{-2}    
\quad
\hbox{for all $t\geq 2$,}
\end{equation}
for some constant $K$ depending on $R$,\,$|\theta|$,\,$d$,\,$\beta$,\,$\rho,\delta,\, \Lip(\ell)$ and $\left\|f\right\|_\infty$.
\end{prop}

We first show how Proposition~\ref{prop convergence tool} follows from Proposition~\ref{prop Concentration property}. For later use, we isolate the argument in the following lemma. Note that $u_\theta$ satisfies the required hypotheses by Theorem~\ref{teo app Lip estimates} and  the condition $u_\theta(0,\cdot,\omega)=0$ on $\M$.

\begin{lemma}\label{lemma key implication}
Let $u:[0,+\infty)\times\M\times\Omega\to\R$ be a function such that 
\[
|u(t,x,\omega)-u(s,x,\omega)|\leq \kappa |t-s|,
\ \ 
|u(t,x,\omega)|\leq \kappa(1+t)
\quad
\FORALL\ t,s\in [0,+\infty),\ x\in\M\ \AND\ \omega\in\Omega,
\]
for some constant $\kappa>0$.
Let assume that \eqref{claim concentration inequality} holds for some $R>0$ and $\overline H(\theta)\in\R$. Then \eqref{claim concentration} holds $\PP$-almost surely for the same $R$ and $\overline H(\theta)$. 
\end{lemma}

\begin{proof}
Let us set 
\[
X_n(\omega) \coloneqq  \sup_{y\in B_{nR}} \left |\frac{u(n, y, \omega)}{n} + \overline{H}(\theta)\right|
\quad
\AND
\quad 
\alpha_n \coloneqq  K  \left(\frac{\ln n}{n}\right)^{1/2}\qquad \FORALL\ n\in\N.  
\]
We claim that $\limsup_n X_n(\omega)=0$ for $\PP$--almost every $\omega\in\Omega$. Indeed, note that 
\[
\left\{
\limsup\nolimits_n X_n>0
\right\}
\subseteq 
\bigcap_{k=2}^{+\infty}\underbrace{\bigcup_{n=k}^{+\infty} \left\{ X_n>\alpha_n\right\}}_{E_k}. 
\] 
By hypothesis we have  
\[
\PP\left(\limsup\nolimits_n X_n>0\right)
\leq
\lim_k \PP(E_k)\leq \lim_k\sum_{n=k}^{+\infty} \frac{1}{n^2}=0,
\]
as it was claimed. The assertion follows from this by applying the subsequent lemma with $v \coloneqq u(\cdot,\cdot,\omega)$ for $\PP$-almost every $\omega\in\Omega$. 
\end{proof}

\begin{lemma}
Let $v:[0,+\infty)\times\M\to\R$ be a function such that 
\[
|v(t,x)-v(s,x)|\leq \kappa |t-s|,
\quad
|v(t,x)|\leq \kappa(1+t)
\qquad
\FORALL\ t,s\in [0,+\infty)\ \AND\ x\in\M
\]
for some constant $\kappa>0$. For every $a\in\R$ and $R>0$ we have 
\begin{equation}\label{claim limsup}
\limsup_{t\to+\infty} \sup_{x\in B_{tR}} \left|  \frac{v(t,x)}{t}+a\right|
=
\limsup_{n\to+\infty} \sup_{x\in B_{nR}} \left|  \frac{v(n,x)}{n}+a\right|.
\end{equation}
\end{lemma}

\begin{proof}
It suffices to prove \eqref{claim limsup} with $\leq$ in place of $=$, being the reverse inequality obvious. For every $t\geq 1$ we have 
\[
\sup_{x\in B_{tR}} \left|  \frac{v(t,x)}{t}+a\right|
\leq
\sup_{x\in B_{\lceil t\rceil R}} \left|  \frac{v(t,x)}{t}+a\right|
\leq
\sup_{x\in B_{\lceil t\rceil R}} \left|  \frac{v(\lceil t\rceil ,x)}{\lceil t \rceil}+a\right|
+
\sup_{x\in B_{\lceil t\rceil R}} \left|  \frac{v(\lceil t\rceil ,x)}{\lceil t \rceil}-\frac{v(t,x)}{t}  \right|.
\] 
Now 
\[
\sup_{x\in B_{\lceil t\rceil R}} \left|  \frac{v(\lceil t\rceil ,x)}{\lceil t \rceil}-\frac{v(t,x)}{t}  \right|
\leq
\sup_{x\in B_{\lceil t\rceil R}} \left|  \frac{v(\lceil t\rceil ,x)}{\lceil t \rceil}-\frac{v(t,x)}{\lceil t\rceil}  \right|
+
\kappa(1+t)\left| \frac{1}{\lceil t \rceil}-\frac1t  \right|
\leq 
\frac{\kappa}{\lceil t \rceil}
+
\frac{\kappa(1+t)}{t\lceil t \rceil},
\]
so the assertion follows by sending $t\to +\infty$. 
\end{proof}

Let us now proceed to prove Proposition~\ref{prop Concentration property}. To this aim, we start by stating the following 
fact. 

\begin{proposition} 
\label{prop:expectation}
The following statements hold. 
\begin{enumerate}[(i)]
\item
For all $\theta \in \M$, we have that $U_{\theta}(t) / t$ converges as $t$ goes to infinity.\smallskip
   \item 
   Let
 $\overline H \colon \M \to \R$ be defined by 
	\[
		\overline H(\theta) 
			\coloneqq 
			-\lim_{t\to +\infty} \frac{U_\theta(t)}{t}
			\quad
			\hbox{for all $\theta\in\R^d$.}
		\]
For every fixed $\theta\in\R^d$, there exists a constant $K$, depending on $|\theta|$,\,$d$,\,$\beta$,\,$\rho,\delta,\, \Lip(\ell)$ and $\left\|f\right\|_\infty$, such that
    \begin{equation*}
    \bluee
    \left|\frac{U_{\theta}(t)}{t}+\overline H(\theta)\right| \leq K 
    \left(\frac{\ln t}{t}\right)^{1/2}
    \quad
    \hbox{for all $t \geq 2$.}
    \end{equation*}
   \end{enumerate}
\end{proposition}

The proof of Proposition~\ref{prop:expectation} consists in showing that $U_\theta(t)$, satisfies an approximated subadditive inequality. 
We postpone that proof and first explain how to use this fact to establish \Cref{prop Concentration property}. In doing so, we also rely crucially on \Cref{prop:concentration}.

\begin{proof}[Proof of \Cref{prop Concentration property}]
	According to Proposition~\ref{prop:expectation}, we define $\overline H \colon \M \to \R$ by setting 
	\[
		\overline H(\theta) 
			\coloneqq 
			-\lim_{t\to +\infty} \frac{U_\theta(t)}{t}
			=
			-\lim_{t\to +\infty} \frac{\EE[u_{\theta}(t, 0, \cdot)]}{t}
			\qquad
			\hbox{for all $\theta\in\M$.}
	\]
	Consider $n \in \NN$ and a discretization $\bluee Z_n \coloneqq \frac{1}{n^2}\Z^d\cap   B_{nR}$  of $ B_{nR}$ consisting of at most $(2n^3R)^{d}$ points and such that any point of $B_{nR}$ is $(\sqrt{d} / n^{2})$-close to $Z_n$. 
	Consider $M > 0$ arbitrary.
	Then, for $n \in \NN$, we focus on the event
	\[
		\sup_{z \in Z_n} \left| \frac{u_\theta(n, z,\omega)}{n} + \overline{H}(\theta) \right| \geqslant M \,.
	\]
    
	Using first the union bound, then \Cref{prop:expectation}-(ii), and last the stationarity of $u_\theta$, we have that, for $n$ large enough,
	\begin{align*}
		\PP \left( \sup_{z \in Z_n} \left | \frac{u_\theta(n, z,\cdot)}{n} + \overline{H}(\theta) \right | \geqslant  M+K \left(\frac{\ln n}{n}\right)^{1/2} \right) 
			&\leqslant \sum_{z \in Z_n} \PP \left( \left| \frac{u_\theta(n, z,\cdot)}{n} + \overline{H}(\theta) \right| \geqslant M+K \left(\frac{\ln n}{n}\right)^{1/2} \right) \\
			&\leqslant \sum_{z \in Z_n} \PP \left( \left| \frac{u_\theta(n, z,\cdot)}{n} - \frac{U_{\theta}(n)}{n} \right| \geqslant M \right) \\
			&= |Z_n|\, \PP \left( \left| \frac{u_\theta(n, 0,\cdot)}{n} - \frac{U_{\theta}(n)}{n} \right| \geqslant M \right).
	\end{align*} 
	Note that, by the choice of $Z_n$ and \Cref{prop:concentration}, 
	\[
		|Z_n| \, \PP \left( \left| \frac{u_\theta(n, 0,\cdot)}{n} - \frac{U_{\theta}(n)}{n} \right| \geqslant M \right)
			\leqslant ({\bluee 2}n^3R)^{d} \, \exp \left( -c M^2 n \right) \,,
	\]
    and combining with the previous inequality, we get
    \begin{align} \label{eq:discrete_con}
    \PP \left( \sup_{z \in Z_n} \left | \frac{u_\theta(n, z,\cdot)}{n} + \overline{H}(\theta) \right | \geqslant  M+K \left(\frac{\ln n}{n}\right)^{1/2} \right) \leq ({\bluee 2}n^3R)^{d} \, \exp \left( -c M^2 n \right).
    \end{align} 
We deduce that there exists a constant $K'>K$ large enough, depending on $R$,\,$|\theta|$,\,$d$,\,$\beta$,\,$\rho,\delta$,\,$\Lip(\ell)$ and $\left\|f\right\|_\infty$, such that
     \begin{align} \label{eq:discrete_con2}
    \PP \left( \sup_{z \in Z_n} \left | \frac{u_\theta(n, z,\cdot)}{n} + \overline{H}(\theta) \right | \geqslant  M+K' \big(\ln(n)/n\big)^{\frac{1}{2}} \right) \leq \, \exp \left( -c M^2 n \right)
    \quad
    \hbox{for all $n\geq 2$}.
    \end{align} 
    By applying Theorem~\ref{teo app Lip estimates} with $H_\theta$ in place of $H$ we derive that $u_\theta(t, \cdot, \omega)$ is 
    $\bluee \beta t$--Lipschitz in $\M$ and $u_\theta(\cdot, x, \omega)$ is $\bluee\beta(1+ |\theta|)$--Lipschitz in $[0,+\infty)$.\footnote{Since $\bluee g=0$ and the Hamiltonian $H_\theta$ satisfies (H1$^*$)-(H3$^*$) with 
	$\beta_1 \coloneqq \beta(1+|\theta|)$ and $\beta_2=\beta_3 \coloneqq \beta$.}  
	We infer 
	\begin{equation}\label{eq linear growth u theta}
		|u_\theta(t, x, \omega)| 
			=
			|u_\theta(t, x, \omega)-u_\theta(0, x, \omega)| 
			\leqslant 
			{\bluee \beta t (1 + |\theta|)}
			\qquad
			\hbox{for all $(t, x, \omega) \in \ccyl \times \Omega$.}
	\end{equation}
	The latter implies
	\begin{align*}
		\sup_{y\in   B_{tR}} \left| \frac{u_\theta(t, y,\omega)}{t} + \overline{H}(\theta) \right|
			&\leqslant \sup_{y\in   B_{\lceil t\rceil R}} \left| \frac{u_\theta(t, y, \omega)}{t} + \overline{H}(\theta) \right|
			\\
			&\leqslant \sup_{y\in   B_{\lceil t\rceil R}} \left| \frac{u_\theta(\lceil t\rceil, y, \omega)}{t} + \overline{H}(\theta) \right| + 
			\frac{\bluee \beta(1+|\theta|)}{t}
			\\
			&\leqslant \sup_{y\in   B_{\lceil t\rceil R}} \left| \frac{u_\theta(\lceil t\rceil, y, \omega)}{\lceil t\rceil} + \overline{H}(\theta) \right| + 2\,\frac{\bluee \beta(1+|\theta|)}{t}
			\\
			&\leqslant \sup_{z \in Z_{\lceil t\rceil}} \left| \frac{u_\theta(\lceil t\rceil, z,\omega)}{\lceil t\rceil} + \overline{H}(\theta) \right| + 
			{\beta\,\frac{\sqrt{d}}{\lceil t \rceil^2} }+ 2\, \frac{\bluee \beta(1 + |\theta|)}{t}
	\end{align*}
In view of \eqref{eq:discrete_con2} and of the fact that $1/t=o\big(\log(t)^{1/2}t^{-1/2}\big)$, we deduce that there exists a constant $K''$, depending on $R$,\,$|\theta|$,\,$d$,\,$\beta$,\,$\rho,\delta$,\,$\Lip(\ell)$ and $\left\|f\right\|_\infty$, such that, for all $M>0$, 
    \begin{eqnarray*}
    \PP \left( \sup_{y\in   B_{tR}} \left| \frac{u_\theta(t, y,\omega)}{t} + \overline{H}(\theta) \right| \geqslant  M+K'' \left(\frac{\ln t}{t}\right)^{1/2} \right) 
    \leq \exp(-c M^2 t)
    \qquad
    \hbox{for all $t \geq 2$.}
        \end{eqnarray*}
        Taking $M \coloneqq (\ln t/t)^{1/2} \sqrt{2}{c}^{-1/2}$, we get the result.
\end{proof}

Let us turn back to the proof of \Cref{prop:expectation}. 

\begin{proof}[Proof of {\Cref{prop:expectation}}]
	Fix $\theta \in \M$ and $t > 0$. 
	Denote $\mathbf{B}(t) \coloneqq \overline B(0, t \left\| f \right\|_\infty)$. 
	Note that, for all $\alpha \in \Gamma(t)$ and $b \in \BB(t)$, the solution of the ODE 
	\begin{eqnarray*}
		\begin{cases}
			\dot y_0(s) = f(\alpha[b](s),b(s))\qquad\hbox{in $[0, t]$} \\
			y_0(0) = 0
		\end{cases}
	\end{eqnarray*}
	is such that, for all $s \in [0, t]$, we have that $y_0(s) \in \mathbf{B}(t)$.
	 From Theorem~\ref{teo app Lip estimates} with $H_\theta$ in place of $H$ we derive that $u_\theta(t, \cdot, \omega)$ is 
    $\bluee \beta t$--Lipschitz in $\M$ and $u_\theta(\cdot, x, \omega)$ is $\bluee\beta(1+ |\theta|)$--Lipschitz in $[0,+\infty)$.
	We will discretize $\mathbf{B}(t)$ accordingly.
	Consider a finite set $Z$ of size $\bluee\left\lceil 2 \left\| f \right\|_\infty\,t\,(\beta t)  \right\rceil^{d}$ such that any point of $\mathbf{B}(t)$ is $(\beta t )^{-1}$--close to a point in $Z$. 
	Using the Lipschitz property of $u_\theta(t, \cdot, \omega)$, the union bound, the fact that variables $u_{\theta}(t, x, \omega)$ and $u_{\theta}(t, 0, \omega)$ have the same distribution by stationarity and the concentration proven in \Cref{prop:concentration}, we get that, for all $M > 0$,
	\begin{align*}
		\m{P} \left( \exists x \in \mathbf{B}(t), |u_{\theta}(t,x, \cdot) - U_\theta(t)| \geqslant M \right) 
			&\leqslant \m{P}  \left( {\bluee \exists z \in Z, |u_{\theta}(t,z,\cdot)} - U_\theta(t)| \geqslant M - 1 \right) \\
			&\leqslant \sum_{z \in Z}  \m{P} \left( |u_{\theta}(t, z, \cdot) - U_\theta(t)| \geqslant M - 1 \right) \\
			&=  \sum_{z \in Z} \m{P} \left( |u_{\theta}(t, 0, \cdot) - U_\theta(t)| \geqslant M - 1 \right)  \\
			&\leqslant 
			{\bluee\left\lceil 2 \left\| f \right\|_\infty\,\beta t ^2  \right\rceil^{d}}
			 \exp\left(-c (M - 1)^2 / t \right),
	\end{align*}
	where $c$ is a constant depending on $\rho, \delta, \Lip(\ell)$, and $\left\| f \right\|_\infty$, but not on $\theta$.

	Taking $M_t {\bluee \coloneqq \big(\ln (\lceil 2 \left\| f \right\|_\infty\,\beta t^2 \rceil^{d} t)\big)^{1/2}c^{-1/2} t^{1/2} + 1}$ in place of $M$ in the above inequality, we get
	\begin{align*}
		\m{P}(\exists x \in \mathbf{B}(t), |u_{\theta}(t, x, \cdot) - U_\theta(t)| \geqslant M_t)
			& \leq {\bluee t^{-2}}
	\end{align*}
{\bluee In particular,}
	\begin{equation}
		\label{Equation: Low probability for low value}
		\m{P}\left(\inf_{x \in \mathbf{B}(t)}{u_{\theta}(t,x,\cdot)} \leqslant  U_\theta(t) - M_t \right) 
			\leqslant {\bluee t^{-2}},
			\qquad
		\m{P}\left(\sup_{x \in \mathbf{B}(t)}{u_{\theta}(t,x,\cdot)} \geqslant  M_t+U_\theta(t) \right) 
			\leqslant {\bluee t^{-2}},	
	\end{equation}
We claim that there exists a constant $\widehat{K}$ depending on {\bluee $|\theta|$,\,$d$,\,$\beta$,\,$\rho,\delta,\, \Lip(\ell)$ and $\left\|f\right\|_\infty$} such that	
	\begin{equation}
	\label{eq:minv}
		\EE \left[\inf_{x \in \mathbf{B}(t)}{u_{\theta}(t, x, \cdot)}\right] 
			\geqslant U_\theta(t) - \widehat{K}  \left( t\ln t\right)^{1/2}\!\!\!\!\!\!\!\!,
			\qquad
		\EE \left[\sup_{x \in \mathbf{B}(t)}{u_{\theta}(t, x, \cdot)}\right] 
			\leqslant U_\theta(t) + \widehat{K}  \left(t\,{\ln t}\right)^{1/2}
	\end{equation}
for all $t\geq 1$. 
Let us prove the first inequality. We recall that $\bluee |u_\theta(t, x, \omega)| \leqslant \beta t (1 + |\theta|)$, see \eqref{eq linear growth u theta}, in particular $\bluee |U_\theta(t)| \leqslant \beta t (1 + |\theta|)$.
	Then,
	\begin{align*} 
		\EE \left[ \inf_{x \in \mathbf{B}(t)}{u_{\theta}(t, x, \cdot)} \right] 
			&\geqslant \m{P}\left(\inf_{x \in \mathbf{B}(t)}{u_{\theta}(t, x, \cdot)} \leqslant  U_\theta(t) - M_t 					\right) 
				{\bluee \left( -\beta t(1+|\theta|) \right )} 
\\
				&\qquad + \m{P}\left(\inf_{x \in \mathbf{B}(t)}{u_{\theta}(t, x, \cdot)} >  U_\theta(t) - M_t 					\right) \left( U_\theta(t) - M_t \right). 
	\end{align*}
{\bluee
Now we use the identity 
$\m{P}\left(\inf_{x \in \mathbf{B}(t)}{u_{\theta}(t,x,\cdot)} >  U_\theta(t) - M_t \right) 
=
1-\m{P}\left(\inf_{x \in \mathbf{B}(t)}{u_{\theta}(t,x,\cdot)} \leqslant  U_\theta(t) - M_t \right)$  
and the fact, observed above, that $-U_\theta\geq-\beta t (1 + |\theta|)$. We get			
	\begin{align*} 
	\EE \left[ \inf_{x \in \mathbf{B}(t)}{u_{\theta}(t, x, \cdot)} \right] 				
			&\geq \m{P}\left(\inf_{x \in \mathbf{B}(t)}{u_{\theta}(t, x, \cdot)} \leqslant  U_\theta(t) - M_t \right) \left( - {\bluee \beta t(1+|\theta|)} - U_\theta(t) + M_t \right) + U_\theta(t) - M_t
			\\
			&\geqslant U_\theta(t) - M_t - \m{P}\left(\inf_{x \in \mathbf{B}(t)}{u_{\theta}(t, x, \cdot)} \leqslant  U_\theta(t) - M_t \right)2\beta t(1+|\theta|).
	\end{align*}
}
In view of the first inequality in \eqref{Equation: Low probability for low value}, we get the first inequality in \eqref{eq:minv}. 

To prove the second inequality in \eqref{eq:minv}, we argue analogously. We have 
	\begin{align*} 
		\EE \left[ \sup_{x \in \mathbf{B}(t)}{u_{\theta}(t, x, \cdot)} \right] 
			&\leqslant \m{P}\left(\sup_{x \in \mathbf{B}(t)}{u_{\theta}(t, x, \cdot)} <  U_\theta(t) + M_t 					\right) 
			\left( U_\theta(t) + M_t\right) 	
\\
				&\qquad + \m{P}\left(\sup_{x \in \mathbf{B}(t)}{u_{\theta}(t, x, \cdot)} \geq U_\theta(t) + M_t 					\right)  \beta t(1+|\theta|)\\
				&\leq
			U_\theta(t) + M_t
			+
			\m{P}\left(\sup_{x \in \mathbf{B}(t)}{u_{\theta}(t, x, \cdot)} \geq U_\theta(t) + M_t 					\right)  2\beta t(1+|\theta|),
	\end{align*}
and the second inequality in  \eqref{eq:minv} follows in view of the second inequality in \eqref{Equation: Low probability for low value}. 

	We will now show that the sequence $(-U_\theta(n))_{n \in \NN}$ is almost subadditive and therefore it has a limit.
	Let $n \geqslant 1$ and $\bluee m, n\in\N$. 
	According to Theorems~\ref{teo appendix representation} and~\ref{teo dynamic programming principle}, we have the following formulae:
	\begin{equation} 
	\label{eq:value}
		u_\theta(m,0,\omega) 
			= \sup_{\alpha\in\Gamma(m)} \; \inf_{b\in\BB(m)} \left\{ 
				\int_0^m \ell_\theta(y_0(s), \alpha[b](s), b(s), \omega) \,ds 
			\right\} \,,
	\end{equation}
	and
	\begin{equation} 
	\label{eq:DPm}
		u_\theta(m+n,0,\omega)
			= \sup_{\alpha\in\Gamma(m)} \; \inf_{b\in\BB(m)} \left\{
				\int_0^{m} \ell_\theta(y_0(s), \alpha[b](s), b(s), \omega) \,ds + u_\theta(n, y_0(m), \omega) 
			\right\} \,.
	\end{equation}
	By \ref{eq:value} and \ref{eq:DPm}, we have
	\begin{equation*} 
\bluee		u_\theta(m + n, 0, \omega) 
			\geqslant u_\theta(m, 0, \omega) 
				+ \inf_{x \in \mathbf{B}(m)} u_\theta(n, x, \omega) \,.
	\end{equation*}
	By taking expectation and by  using the first inequality in \eqref{eq:minv}, we get
	\begin{eqnarray}\label{inequality sub}
		U_{\theta}(m + n) \geqslant U_\theta(m) + U_\theta(n) - \widehat{K}\, {\bluee\big(n\ln(n)\big)^{1/2}}
		\qquad
		\hbox{ for all $n\in\N$.}
	\end{eqnarray}
	Set $a_n \coloneqq -U_\theta(n)$ for all $n \in \N$,  and $z(h) \coloneqq \widehat{K} \big(h\ln(h)\big)^{1/2}$ for all $h \geqslant 1$. 
	By the previous inequality, the sequence $(a_n)$ is \emph{subadditive with an error term $z$}, i.e.,  
	\begin{equation} \label{eq:sub}
		a_{m+n} \leqslant a_m + a_n + z(m + n)
		\qquad
		\hbox{for all $m,n \in \N$.} 
	\end{equation}
	Note that $z$ is non-negative, non-decreasing  and $\int_1^{+\infty} z(h) / h^2\,dh < +\infty$. Furthermore, by {\bluee Theorem~\ref{teo app Lip estimates}, the function $U_\theta$ is  $\beta(1+|\theta|)$--Lipschitz} in $[0,+\infty)$, in particular $a_n/n$ is bounded since $U_\theta(0) = 0$. 
	By \cite[Theorem 23, page 162]{BE52}, the sequence $(a_n / n)_{n \in \NN}$ converges to a limit, {\bluee that we shall call $\overline H(\theta)$.}  {\bluee We want to estimate the rate of convergence. To this aim, we remark that inequality {\eqref{eq:sub}} gives by induction that, for all $p,n\in\N$,  }
\begin{eqnarray*}
\bluee 
a_{2^{p} n} \leq 
2^{p} a_n+2^p\widehat{K}\sum_{k=1}^p 2^{-k/2}\big(n\ln(2^kn)\big)^{1/2},
\end{eqnarray*}
{\bluee from which we derive
\[
a_{2^{p} n} 
\leq 
2^{p} a_n+2^p A (n\ln(n))^{1/2}\qquad\FORALL\ n\geq 2, 
\]
with $A \coloneqq \widehat{K}\sum_{k=1}^{+\infty} 2^{-k/2}\sqrt{k+1}$.}
%
%
By dividing the above inequality by $2^p n$ and sending $p\to +\infty$, we end up with 
\begin{eqnarray}\label{eq1:half rate}
\overline H(\theta) \leq a_n/n+A \ln(n)^{1/2} n^{-1/2}\qquad\FORALL\ n\geq 2.
\end{eqnarray}

{\bluee
Let us now estimate from below the term $\overline H(\theta)-a_n/n$.  In view of \eqref{eq:DPm}, for every fixed  $\delta>0$ we can choose a strategy  $\alpha\in\Gamma(m)$ such that 
\[
u_\theta(m+n,0,\omega)-\delta
			\leq 
			\inf_{b\in\BB(m)} \left\{
				\int_0^{m} \ell_\theta(y_0(s), \alpha[b](s), b(s), \omega) \,ds + u_\theta(n, y_0(m), \omega) 
			\right\} \,.
\]   
In view of \eqref{eq:value} we get 
\begin{equation*} 
		u_\theta(m + n, 0, \omega) -\delta
			\leqslant u_\theta(m, 0, \omega) 
				+ \sup_{x \in \mathbf{B}(m)} u_\theta(n, x, \omega) \,.
	\end{equation*}
 	By taking expectation, by the arbitrariness of $\delta > 0$ and by using the second inequality in \eqref{eq:minv}, we get
	\begin{eqnarray}\label{inequality2 sub}
		U_{\theta}(m + n) \leqslant U_\theta(m) + U_\theta(n) + \widehat{K}\, {\bluee\big(n\ln(n)\big)^{1/2}}
		\qquad
		\hbox{ for all $n\in\N$.}
	\end{eqnarray}
By setting, as above,	 $a_n \coloneqq -U_\theta(n)$ for all $n \in \N$,  we obtain
\begin{equation} \label{eq:sub2}
		a_m + a_n
		\leq 
		a_{m+n}+ \widehat{K}\, {\bluee\big(n\ln(n)\big)^{1/2}}
		\qquad
		\hbox{for all $m,n \in \N$.} 
	\end{equation}
By induction, we derive, for all $p,n\in\N$, 
\begin{eqnarray*}
2^{p} a_n
\leq
a_{2^{p} n}
+
2^p\widehat{K}\sum_{k=1}^p 2^{-k/2}\big(n\ln(2^kn)\big)^{1/2},
\end{eqnarray*}
from which we derive
\[
2^{p} a_n
\leq
a_{2^{p} n} 
+2^p A (n\ln(n))^{1/2}\qquad\FORALL\ n\geq 2, 
\]
with  $A \coloneqq \widehat{K}\sum_{k=1}^{+\infty} 2^{-k/2}\sqrt{k+1}$.
By dividing the above inequality by $2^p n$ and sending $p\to +\infty$, we end up with 
\begin{eqnarray}\label{eq2:half rate}
a_n/n
\leq 
\overline H(\theta) +A \ln(n)^{1/2} n^{-1/2}\qquad\FORALL\ n\geq 2.
\end{eqnarray}
By putting together inequalities \eqref{eq1:half rate} and \eqref{eq2:half rate} we finally get
\begin{eqnarray}\label{eq:full rate}
|U_{\theta}(n)/n+\overline{H}(\theta)| \leq A \ln(n)^{1/2} n^{-1/2}\qquad\FORALL\ n\geq 2.
\end{eqnarray} 
The assertion follows by the Lipschitz character of the function $U_\theta$.}
\end{proof}

\begin{remark} \label{remark:sub}
{\bluee We remark for further use what we have actually shown with \eqref{inequality sub} and \eqref{eq:full rate}: there exist a constants $\widehat{K}$ and $A$, only depending on $|\theta|$,\,$d$,\,$\beta$,\,$\rho,\delta,\, \Lip(\ell)$ and $\left\| f \right\|_\infty$, such that 
\begin{eqnarray*}
	U_{\theta}(m + n) \geqslant U_\theta(m) + U_\theta(n) - \widehat{K}{\bluee\big(n\ln(n)\big)^{1/2}}	\qquad
	\hbox{ for all $m,n\in\N$}
\end{eqnarray*}
and
\[
\left|U_\theta(n)-\lim_n\frac{U_\theta(n)}{n}  \right|
\leq 
A \ln(n)^{1/2} n^{-1/2}\qquad\FORALL\ n\geq 2.
\]
}
\end{remark}
{\bluee
\subsection{Proof of \Cref{Result: Concentration property}}\label{sec: Concentration property} Let us fix $\theta\in\M$. We first remark that 
$u^\eps_\theta(t,x,\omega)=\tilde u^\eps_\theta(t,x,\omega)-\langle\theta,x\rangle$ and, by rescaling, 
\begin{equation}\label{eq:rescaling again}
u^\epsilon_\theta(t, x,\omega) 
= 
\eps  u_\theta(t / \epsilon, x / \epsilon,\omega)			
\qquad\text{for all $(t ,x,\omega)\in (0,+\infty)\times\R^d\times\Omega$}.
\end{equation}
Let us fix $T>0$ and $R>0$. Proposition \ref{prop Concentration property} with $t \coloneqq 1/\eps$ yields, in view of  \eqref{eq:rescaling again}, 
\begin{equation}\label{claim concentration inequality epsilon}
   \m{P}\left( \sup_{x\in B_{R}} \left |{u^\eps_\theta(1, x, \omega)} + \overline{H}(\theta)\right| \geq 
  \widehat K  \left({-\eps \ln \eps}\right)^{1/2} \right)  \leq \eps^{2}    
\quad
\hbox{for all $\eps\leq 1/2$,}
\end{equation}
for some constant $\widehat K$ depending on $R$,\,$|\theta|$,\,$d$,\,$\beta$,\,$\rho,\delta,\, \Lip(\ell)$ and $\left\|f\right\|_\infty$. From Theorem~\ref{teo app Lip estimates} with $H(x/\eps,\theta+p,\omega)$ in place of $H$, we derive that $u^\eps_\theta(\cdot,x,\omega)$ is $\beta(1+|\theta|)$--Lipschitz in $[0,+\infty)$. In particular, 
\begin{equation}\label{eq:trivial inequality}
|u^\eps_\theta(t,x,\omega)+t\overline H(\theta)|
\leq
|u^\eps_\theta(1,x,\omega)+\overline H(\theta)|
+
2\beta(1+|\theta|)T,
\end{equation}
where we also used the fact that $\overline H$ enjoys (H1). Let us choose $K>0$ large enough so that 
\[
K(-\eps\ln \eps)^{1/2}
\geq
\widehat K(-\eps\ln \eps)^{1/2}
+
2\beta(1+|\theta|)T
\qquad
\FORALL\ \eps\leq 1/2.
\]
The choice of such a constant $K$ clearly depends on $T,\,|\theta|,\,\beta$, and on $R$,\,$d$,\,$\rho$,\,$\delta$,\,$\Lip(\ell)$,\,$\left\|f\right\|_\infty$ through $\widehat K$. In view of \eqref{eq:trivial inequality} and \eqref{claim concentration inequality epsilon} we derive 
\[
   \m{P}\left( \sup_{[0,T]\times B_{R}} \left |{u^\eps_\theta(t, x, \omega)} + t\overline{H}(\theta)\right| \geq 
   K  \left(-\eps {\ln \eps}\right)^{1/2} \right)  \leq \eps^{2}    
\quad
\hbox{for all $\eps\leq 1/2$,}
\]
as it was to be shown. \qed

\subsection{Proof of Corollary \ref{cor:quantitative estimate}} For every fixed $\theta\in\M$, $\eps>0$ and $n\in\N$, let us denote by $\tilde u^\eps_{n_\theta}$ the solution of equation \eqref{eq eps hj} satisfying $\tilde u^\eps_{n_\theta}(0,x, \omega) = \langle \theta, x\rangle$ for all $(x,\omega)\in\M\times\Omega$. 
According to Theorem~\ref{Result: Concentration property}, for every fixed $T>0$ and $R>0$ there exists a constant $K$ depending on $R$,\,$T$\,$|\theta|$,\,$d$,\,$C$,\,$\delta,\, \Lip(\ell)$ such that, for each $n\in\N$,
\begin{equation}\label{eq:concentration estimate2}
   \m{P}\left( \sup_{[0,T]\times B_{R}} \left |{\tilde u^\eps_{n_\theta}(t, x, \omega)}-\langle \theta, x\rangle + t\overline H_n(\theta)\right| \geq 
   K  \left(-\eps {\ln \eps}\right)^{1/2} \right)  \leq \eps^{2}    
\quad
\hbox{for all $\eps\leq 1/2$.}
\end{equation}
According to Proposition~\ref{app prop dense}, each $\overline H_n$ satisfies conditions (H1)--(H2) with $\beta \coloneqq \sup_n\beta_n$. We infer that $(\overline H_n)_n$ is a sequence of equi--Lipschitz and locally equi--bounded functions on $\M$, hence pre-compact in $\CC(\M)$ by the Ascoli--Arzel\`a Theorem. Let $G_1,G_2$ be a pair of accumulations points in $\CC(\M)$ for the sequence $(\overline H_n)_n$, i.e., there exists two diverging sequences $(n^1_k)_k,\,(n^2_k)_k$ such that $\lim_k \overline H_{n^i_k}=G_i$ for $i\in\{1,2\}$. Since $H_n(\cdot,\cdot,\omega)\ucv H(\cdot,\cdot,\omega)$ in $\TM$ for every $\omega\in\Omega$, by the stability property of viscosity solutions we infer that $\tilde u^\eps_{n_\theta}(\cdot,\cdot,\omega)\ucv \tilde u^\eps_\theta(\cdot,\cdot,\omega)$ in $\ccyl$  for every $\omega\in\Omega$, where $\tilde u^\eps_\theta$ denotes the solution of equation \eqref{eq eps hj} satisfying $\tilde u^\epsilon(0,x, \omega) = \langle \theta, x\rangle$ for all $(x,\omega)\in\M\times\Omega$. By passing to the limit in \eqref{eq:concentration estimate2} along the subsequence $(n^i_k)_k$ we infer 
\begin{equation}\label{eq:concentration estimate3}
   \m{P}\left( \sup_{[0,T]\times B_{R}} \left |{\tilde u^\eps_\theta(t, x, \omega)}-\langle \theta, x\rangle + t\overline G_i(\theta)\right| \geq 
   K  \left(-\eps {\ln \eps}\right)^{1/2} \right)  \leq \eps^{2}    
\quad
\hbox{for all $\eps\leq 1/2$.}
\end{equation}
By triangular inequality, this implies in particular
\[
   \m{P}\left( T|G_1(\theta)-G_2(\theta)| \geq 
   K  \left(-\eps {\ln \eps}\right)^{1/2} \right)  \leq \eps^{2}    
\quad
\hbox{for all $\eps\leq 1/2$,}
\]
which implies that $G_1(\theta)=G_2(\theta)$. By arbitrariness of $\theta\in\R^d$, we conclude that the sequence $(\overline H_n)_n$ converges in $\CC(\M)$ to a function $\overline H$, which satisfies (H1)--(H2) with $\beta \coloneqq \sup_n\beta_n$. This proves items (i) and (ii). 

Arguing as we did at the beginning of Section \ref{sec: Concentration property}, we see that \eqref{eq:concentration estimate3} implies inequality \eqref{claim concentration inequality} in the statement of  Proposition \ref{prop Concentration property}. The fact that the HJ equation \eqref{eq eps hj} homogenizes with effective Hamiltonian $\overline H$ follows from this in view of Lemma \ref{lemma key implication}, of the arbitrariness of the choices of $\theta\in\M$ and $R>0$ and of the reduction arguments presented in Section \ref{sec:outline}.. \qed
}

\subsection{Proof of \Cref{thm:genhom2}}
Let $G$ be a stationary Hamiltonian satisfying conditions (G1) and (H1)--(H3), for some $\beta > 0$. 
According to \cite[Lemma 5.1]{EvSo84}, for every fixed $R > 0$ the Hamiltonian $G$ can be represented as 
\begin{equation*}
	G(x, p, \omega)
		= \max_{b \in \bluee B(R)} \min_{a \in A } \left\{-\ell(x, a, b, \omega) - \langle \hat f(a), p \rangle \right\}
		\qquad
		\hbox{for all $(x,p,\omega)\in\M\times\overline B_R\times\Omega$},
\end{equation*}
where $A \coloneqq \overline B_1$, $\bluee B(R) \coloneqq \overline B_R$, $\ell(x, a, b) \coloneqq -G(x, b, \omega) + \beta \langle a, b \rangle$ and $\hat f(a) \coloneqq -\beta a$. 
We derive that 
\begin{equation}\label{eq representation H}
	H(x, p, \omega)
		=\max_{b \in\bluee B(R)} \min_{a \in A} \left\{ -\ell(x, a, b, \omega) - \langle f(a), p \rangle \right\}
		\qquad
		\hbox{for all $(x, p, \omega) \in \M \times \overline{B}_R \times \Omega$},
\end{equation}
where $f(a) \coloneqq \pi^\top( \hat f(a)) + v$ and $\pi^\top$ denotes the transpose of the linear map $\pi$. 
It is clear that $f$ satisfies condition $(f)$ with $e \coloneqq v / |v|$ and $\delta \coloneqq |v|$, and $\ell$ satisfies conditions ($\ell_1$)--($\ell_4$). 
Furthermore, $\bluee \|f\|_\infty \leqslant \beta+\delta$ and $\Lip(\ell) \leqslant \beta$, independently on the choice of $R > 0$ in the definition of the set $B(R) \coloneqq \overline B_R$.\smallskip

{\bluee Now let us fix $\theta\in\M$ and denote by $u_\theta$ the solution of equation \eqref{eq eps hj} with $\eps = 1$ and with $H(\cdot, \theta + \cdot\,, \cdot)$ in place of $H$, subject to the initial condition $u_\theta(0, x, \omega) = 0$ for all $(x, \omega) \in \M \times \Omega$.} For every fixed $s\geqslant 1$, define  
\begin{equation}\label{eq def H^s}
	H^s(x,p,\omega) \coloneqq \max_{b \in \bluee B({\beta s})} \min_{a \in A } \left\{-\ell(x, a, b, \omega) - \langle f(a), p \rangle \right\},
	\qquad
	\hbox{$(x, p, \omega) \in \M \times \M \times \Omega$}.
\end{equation}
Let us denote by $u_\theta^s(t, x, \omega)$ the solution of equation \eqref{eq eps hj} with $\eps = 1$ and with $H^s(\cdot, \theta + \cdot\,, \cdot)$ in place of $H$, subject to the initial condition $u_\theta^s(0, x, \omega) = 0$ for all $(x, \omega) \in \M \times \Omega$. 
{\bluee From \Cref{teo app Lip estimates} (with $g=0$, $\beta_1 \coloneqq \beta(1+|\theta|)$ and $\beta_2=\beta_3 \coloneqq \beta$) we know that both  
$\| D_x u\|_{L^\infty([0,s]\times\M)}$ and $\| D_x u^s\|_{L^\infty([0,s]\times\M)}$ are bounded from above by $\beta s$. Since $H(x, p, \omega) = H^s(x, p, \omega)$ on $\M \times \overline{B}_{\beta s} \times \Omega$, we infer that  $u^s_\theta(t, \cdot, \cdot) \equiv u_\theta(t, \cdot, \cdot)$ for all $0 \leqslant t \leqslant s$.  
}
{\bluee By applying Proposition~\ref{prop:concentration} to $u^t_\theta$, we derive that there exists} a constant $c > 0$, only depending on  {\bluee $|\theta|$,\,$d$,\,$\beta$,\,$\rho,\delta=|v|$ (notice that $\Lip(\ell)$ and $\left\| f \right\|_\infty$ are bounded above by $\beta+\delta$),} such that, for all $M > 0$ and $t \geqslant 1$,
\begin{equation*}
	\m{P} \left( |u_{\theta}(t, 0,\cdot) - U_\theta(t)| \geqslant M \sqrt{t} \right) 
		=
		\m{P} \left( |u^t_{\theta}(t, 0,\cdot) - U^t_\theta(t)| \geqslant M \sqrt{t} \right) 
		\leqslant 
		\exp \left( -c M^2 \right) \,.
\end{equation*}
Moreover, as underlined in \Cref{remark:sub}, there exists a constant $\widehat{K}$, only depending on 
{\bluee $|\theta|$,\,$d$,\,$\beta$,\,$\rho,\delta=|v|$ (notice that $\Lip(\ell)$ and $\left\| f \right\|_\infty$ are bounded above by $\beta+\delta$),}  such that
\begin{eqnarray*}
	U^s_{\theta}(m + n) \geqslant U^s_\theta(m) + U^s_\theta(n) - \widehat{K} {\bluee\big(n\ln(n)\big)^{1/2}}
	\qquad
	\hbox{ for all $n,\,m\in\N$.}
\end{eqnarray*}
Applying this to $s = m + n$ we get
\begin{eqnarray*}
	U_{\theta}(m + n) 
		\geqslant U_\theta(m) + U_\theta(n) - \widehat{K} {\bluee\big(n\ln(n)\big)^{1/2}}
		\qquad
		\hbox{ for all $n,\,m\in\N$.}
\end{eqnarray*}
{\bluee By arguing as in the proof of \Cref{prop:expectation} (see the paragraph after inequality \eqref{inequality sub}), we conclude that $-U_\theta(n) / n$ converges, as $n\to +\infty$, to a limit that we shall call $\overline H(\theta)$. Furthermore, as stressed in \Cref{remark:sub},  there exists a constant $A$ only depending on $|\theta|$,\,$d$,\,$\beta$,\,$\rho$,\,$\delta$ (since $\Lip(\ell)$ and $\left\| f \right\|_\infty$ are bounded from above by $\beta+\delta$) such that 
\[
|U_{\theta}(n)/n+\overline{H}(\theta)| \leq A \ln(n)^{1/2} n^{-1/2}\qquad\FORALL\ n\geq 2.
\]
By  \Cref{teo app Lip estimates} again we have that, for all $(x,\omega)\in\M\times\Omega$, $u_\theta(\cdot,x,\omega)$, and hence $U_\theta$, is $\beta(1+|\theta|)$--Lipschitz in $[0,+\infty)$.  
This proves that $U_\theta$ satisfies the statement \Cref{prop:expectation}. 

The statement of Proposition~\ref{prop Concentration property} remains valid as well. Its proof uses only Propositions~\ref{prop:concentration} and \ref{prop:expectation}, the Lipschitz bounds from Theorem~\ref{teo app Lip estimates}, and the fact that \(H\) is a stationary Hamiltonian in $\Ham$. Consequently, the quantitative estimate in \Cref{Result: Concentration property} follows by the same argument as in Section~\ref{sec: Concentration property}, and Proposition~\ref{prop convergence tool} follows via Lemma~\ref{lemma key implication}. By the reduction arguments in Section~\ref{sec:outline}, the HJ equation \eqref{eq eps hj} homogenizes for such $H$ with effective Hamiltonian $\overline H$. This completes the proof. \qed
}

\appendix

\section{PDE results}\label{sec: app A}

In this appendix we collect the PDE results used in the paper, which are of deterministic nature and which follow from the ones herein stated and proved by regarding at $\omega$ as a fixed parameter. We will denote by $H$ a continuous Hamiltonian defined on $\TM$ and satisfying further assumptions 
that will be specified case by case. Throughout this section, we will denote by 
$\D{LSC}(X)$ and $\D{USC}(X)$ the space of lower semicontinuous and upper semicontinuous real functions on a metric space $X$, respectively. 

\smallskip

Let $T>0$ be fixed and consider the following evolutive HJ equation 
\begin{equation}
\label{appA: eq HJ T}
\tag{HJ}
	\partial_t u + H(x, Du) = 0 \qquad \hbox{in $\Tcyl$.}
\end{equation}

We shall say that a function $v\in\D{USC}(\Tcyl)$ is an (upper semicontinuous) {\em viscosity subsolution} of \eqref{appA: eq HJ T} if, for every  $\phi\in\D{C}^1(\Tcyl)$ such that $v-\phi$ attains a local maximum at $(t_0,x_0)\in \Tcyl$, we have 
\begin{equation}\label{app subsolution test}
\partial_t \phi(t_0,x_0)+H(x_0, D_x \phi(t_0,x_0))\leqslant 0. 
\end{equation}
Any such test function $\phi$ will be called {\em supertangent} to $v$ at $(t_0,x_0)$. 

We shall say that $w\in\D{LSC}(\Tcyl)$ is a (lower semicontinuous) {\em viscosity supersolution} of \eqref{appA: eq HJ T} if, for every $\phi\in\D{C}^1(\Tcyl)$ such that $w-\phi$ attains a local minimum at $(t_0,x_0)\in \Tcyl$, we have 
\begin{equation}\label{app supersolution test}
\partial_t \phi(t_0,x_0)+H(x_0, D_x \phi(t_0,x_0))\geqslant 0. 
\end{equation}
Any such test function $\phi$ will be called {\em subtangent} to $w$ at $(t_0,x_0)$. 
A continuous function on $\Tcyl$ is a {\em viscosity solution} of \eqref{appA: eq HJ T} if it is both a viscosity sub and supersolution. Solutions, subsolutions, and supersolutions will be always intended in the viscosity sense, hence the term {\em viscosity} will be omitted in the sequel.

\subsection{Comparison principles}\label{app: comparison}
%
%
We start by stating and proving a comparison principle, which applies in particular to the case of bounded sub and supersolutions. 
The proof is somewhat standard,  we provide it below for the reader convenience.

\begin{theorem}\label{appA: teo comp}
	Let $H$ be a Hamiltonian satisfying the following assumptions, for some continuity modulus $\omega$:
	\begin{align}
		|H(x, p) - H(x, q)| 
			\leqslant \omega(|p-q|)
			&\qquad \hbox{ for all $x, p, q \in \M$.}\label{appA: condition2 comp}\tag{H2$'$}\\
		|H(x, p) - H(y, p)|
			\leqslant \omega\big(|x - y| (1 + |p|)\big)
			&\qquad \hbox{ for all $x, y, p \in \M$,}\label{appA: condition1 comp}\tag{H4}
	\end{align} 
	Let $v\in\D{USC}([0,T]\times\M)$ and $w\in\D{LSC}([0,T]\times\M)$ be, respectively, a bounded from above subsolution and a bounded from below supersolution of \eqref{appA: eq HJ T}.
	Then, for all $(t,x)\in (0,T)\times \M$,
	\[
	v(t,x)-w(t,x)\leqslant \sup_{\M}\big(v(0,\cdot)-w(0,\cdot)\big) \,.
	\]
\end{theorem}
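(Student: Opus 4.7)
The plan is to apply the classical doubling-of-variables and penalization argument for viscosity solutions of evolutionary Hamilton--Jacobi equations, adapted to the unbounded spatial domain $\M=\R^d$ in the absence of coercivity of $H$. I would argue by contradiction: supposing that the conclusion fails at some $(\bar t,\bar x)\in(0,T)\times\M$ with a strict excess $\sigma>0$, the goal is to derive a numerical contradiction.

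For positive parameters $\eps,\eta,\mu,\lambda$, introduce the auxiliary functional
\[
\Phi(t,x,s,y)\coloneqq v(t,x)-w(s,y)-\frac{|x-y|^2}{2\eps^2}-\frac{|t-s|^2}{2\eta^2}-\mu(|x|^2+|y|^2)-\frac{\lambda}{T-t}-\frac{\lambda}{T-s}
\]
on $[0,T)\times\M\times[0,T)\times\M$. Since $v$ is bounded above and $w$ bounded below, the quadratic spatial penalty guarantees that $\Phi$ attains its supremum at some maximizer $(\hat t,\hat x,\hat s,\hat y)$, while the time penalty keeps it away from $\{t=T,\,s=T\}$. Comparing $\Phi(\hat t,\hat x,\hat s,\hat y)$ with $\Phi(\bar t,\bar x,\bar t,\bar x)$ and exploiting both the contradiction hypothesis and the bound on $v(0,\cdot)-w(0,\cdot)$, one checks that, for parameters small enough, $\hat t,\hat s>0$.

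Next I would invoke the viscosity subsolution property for $v$ at $(\hat t,\hat x)$ and the supersolution property for $w$ at $(\hat s,\hat y)$, with test functions obtained by freezing the other pair of variables in $\Phi$. Subtracting the resulting viscosity inequalities yields
\[
\frac{\lambda}{(T-\hat t)^2}+\frac{\lambda}{(T-\hat s)^2}\leqslant H\!\left(\hat y,\tfrac{\hat x-\hat y}{\eps^2}-2\mu \hat y\right)-H\!\left(\hat x,\tfrac{\hat x-\hat y}{\eps^2}+2\mu \hat x\right),
\]
which, splitting the right-hand side and applying \eqref{appA: condition2 comp} to the $2\mu$-perturbation in the momentum and \eqref{appA: condition1 comp} to the $x$--$y$ shift, is bounded above by
\[
\omega\bigl(2\mu(|\hat x|+|\hat y|)\bigr)+\omega\!\Bigl(|\hat x-\hat y|\bigl(1+\bigl|\tfrac{\hat x-\hat y}{\eps^2}\bigr|+2\mu|\hat x|\bigr)\Bigr).
\]

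The heart of the proof lies in taking the parameters to zero in the correct order, and above all in exploiting the specific shape of \eqref{appA: condition1 comp}. I would first send $\eta\to 0^+$ to collapse the time doubling, then $\eps\to 0^+$: the standard doubling lemma gives $|\hat x-\hat y|^2/\eps^2\to 0$ and $|\hat x-\hat y|\to 0$, which makes the second modulus above vanish even though the momentum $|\tfrac{\hat x-\hat y}{\eps^2}|$ itself might blow up. This is the main obstacle, and the reason why $H$ is assumed to satisfy \eqref{appA: condition1 comp} with the factor $(1+|p|)$ rather than a cruder bound: no pointwise control on $|\tfrac{\hat x-\hat y}{\eps^2}|$ is available in the non-coercive setting. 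Finally, sending $\mu\to 0^+$ and using the bound $\mu(|\hat x|^2+|\hat y|^2)=O(1)$ inherited from $\Phi(\hat t,\hat x,\hat s,\hat y)\geqslant\Phi(\bar t,\bar x,\bar t,\bar x)$, one obtains $\mu(|\hat x|+|\hat y|)=O(\sqrt\mu)\to 0$, which kills the first modulus. Keeping $\lambda>0$ fixed throughout, the left-hand side stays bounded below by $2\lambda/T^2>0$ while the right-hand side vanishes, which is the desired contradiction.
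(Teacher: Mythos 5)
Your proposal is correct and follows essentially the same doubling-of-variables and penalization scheme as the paper's proof. The main difference is the choice of spatial localization: you use the quadratic penalty $\mu(|x|^2+|y|^2)$, whose gradient $2\mu x$ is unbounded, so you need the additional observation that $\mu(|\hat x|^2+|\hat y|^2)=O(1)$ implies $\mu|\hat x|=O(\sqrt\mu)$ to control the resulting momentum perturbation via \eqref{appA: condition2 comp}; the paper instead penalizes with $\eta\,\phi(x)=\eta\sqrt{1+|x|^2}$, whose gradient is bounded by $1$, so the perturbation to the momentum is at most $\eta$ and the corresponding modulus is simply $\omega(\eta)$. You also use independent scales $\eps^2,\eta^2$ for the space and time doubling where the paper uses a single $\eps$; this is a cosmetic choice since the time-doubling terms cancel upon subtraction. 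Both routes are valid, and your proposal correctly identifies the role of assumption \eqref{appA: condition1 comp}: the factor $(1+|p|)$ multiplying $|x-y|$ is precisely what allows the term $|\hat x-\hat y|\,|p_\eps|=|\hat x-\hat y|^2/\eps^2$ to be controlled by the doubling lemma despite the absence of any a priori bound on $|p_\eps|$. The only point worth tightening in the write-up is the claim that $\hat t,\hat s>0$ ``for parameters small enough'': as in the paper, this is most cleanly obtained by passing to a limit point $(t_0,x_0)$ as $\eps\to 0^+$ (for fixed $\mu,\lambda$) via the standard lemma and using $v(t_0,x_0)-w(t_0,x_0)>0$ together with $\sup_\M(v(0,\cdot)-w(0,\cdot))\leqslant 0$ to conclude $t_0>0$, hence $\hat t,\hat s>0$ for $\eps$ small.
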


\begin{proof}
	Since $v$ is bounded from above, up to adding to $v$ a suitable constant, we assume without any loss of generality that 
	$\sup_{\M}\big(v(0,\cdot)-w(0,\cdot)\big) = 0$. 
	The assertion is thus reduced to proving that $v\leqslant w$ in $\Tcyl$. 
	We proceed by contradiction. 
	Assume that $v > w$ at some point of $(0,T)\times \M $. 
	Up to translations, we can assume without loss of generality that this point has the form  $(\overline t, 0)$ for some $\overline t\in (0,T)$. 
	We will construct a point $(x, y, p, q)$ where the continuity assumptions 
	\eqref{appA: condition2 comp}-\eqref{appA: condition1 comp} do not hold, leading to a contradiction. 
	
	For every $\eta, \eps, b > 0$, consider the auxiliary function $\Phi \colon \left(\coTcyl\right)^2 \to \R$ defined by
	\[
		\Phi(t,x,s,y) 
			\coloneqq v(t,x)-w(s,y)-\frac{|x-y|^2}{2\eps} -\frac{(t-s)^2}{2\eps}-\eta\left(\phi(x)+\phi(y)\right)
		-\frac{b}{T-t}-\frac{b}{T-s},
	\]
	where $\phi(x) \coloneqq \sqrt{1+ | x |^2}$.

	Define $\theta \coloneqq v(\overline t, 0) - w(\overline t, 0) > 0$. 
	Then, since $\overline t\in (0,T)$, there exists $\delta > 0$ small enough such that, for all $\eta, b \in(0, \delta )$, 
	\[
		\Phi(\overline t,0,\overline t,0)
			= v(\overline t,0)-w(\overline t,0)-2\eta\phi(0)-\frac{2b}{T-\overline t}> \frac{\theta}{2}\,.
	\]
	Notice that 
	\begin{equation}\label{eq penalization}
		\Phi(t,x,s,y)
			\leqslant 
			M
			-\eta\left(\phi(x)+\phi(y)\right)-\frac{b}{T-t}-\frac{b}{T-s}
			\quad
			\hbox{in $\left(\coTcyl\right)^2$}
	\end{equation}
	with $M \coloneqq (\|v^+\|_{L^\infty(\cTcyl)}+\|w^-\|_{L^\infty(\cTcyl)})$, where we have denoted by $v^+(x) \coloneqq \max\{v(x),0\}$ the positive part of $v$ and by $w^-(x)=\max\{-w(x),0\}$ the negative part of $w$.
	We derive that, for every $\eps > 0$ and $\eta \in (0, \delta )$, there exists $(t_\eps, x_\eps, s_\eps, y_\eps) \in \left(\coTcyl\right)^2$, which also depend on $\eta$, such that 
	\begin{equation}\label{parabolic raddoppio}
		\Phi(t_\eps,x_\eps,s_\eps,y_\eps)=\sup_{(\cTcyl)^2} \Phi \geqslant \Phi(\overline t,0,\overline t,0)> \frac{\theta}{2}.
	\end{equation}
	In view of \eqref{eq penalization} we infer 	
	\begin{equation}\label{eq bounded p}
		\eta\left(\phi(x_\eps)+\phi(y_\eps)\right)
			+\frac{b}{T-t_\eps}+\frac{b}{T-s_\eps}
			\leqslant 
			\tilde M 
		\qquad
		\hbox{and}
		\qquad
		\dfrac{|x_\eps-y_\eps|}{\eps}
			+ \dfrac{|t_\eps-s_\eps|}{\eps}
			\leqslant \sqrt{\dfrac{2\tilde M}{\eps}}\,
	\end{equation}
	with $\tilde M \coloneqq M- \theta / 2$. 
	From the first inequality in \eqref{eq bounded p} we derive that exist constants $T_{b}\in (0,T)$, depending on $b \in (0, \delta )$, and $\rho_\eta>0$, depending on $\eta$, such that 
	$t_\eps,s_\eps\in [0, T_{b}]$ and $x_\eps,y_\eps \in B_{\rho_\eta}$. 
	For every fixed $\eta\in (0, \delta )$,  from  \cite[Lemma 3.1]{users} we derive that, up to subsequences,  
	\begin{equation}\label{parabolic limits approximating points}
		\lim_{\eps\to 0} (t_\eps,x_\eps,s_\eps,y_\eps)=(t_0,x_0,t_0,x_0)
		\qquad\hbox{and}\qquad
		\lim_{\eps\to 0}\frac{|x_\eps-y_\eps|^2}{\eps} = 0
	\end{equation}
	for some $(t_0,x_0)\in\cTcyl$ satisfying 
	\begin{equation}\label{parabolic maximum point}
		v(t_0,x_0)-w(t_0,x_0)-2\eta\phi(x_0)-\frac{2b}{T-t_0}
			= \sup_{(t,x)\in\Tcyl} \Phi(t,x,t,x)
			> \frac{\theta}{2}.
	\end{equation}
	In particular, such a point  $(t_0,x_0)$ actually lies in $\Tcyl$, i.e., $t_0>0$, since \eqref{parabolic maximum point} implies $v(t_0,x_0)-w(t_0,x_0)>0$. 
	For every fixed $\eta\in(0,\delta)$, choose $\eps_\eta>0$ small enough so that $(t_\eps,x_\eps)$ and $(s_\eps,y_\eps)$ both belong to $\Tcyl$ when $\eps\in (0,\eps_\eta)$.  The function 
	\[
	\varphi(t,x) \coloneqq w(s_\eps,y_\eps)+\frac{|x-y_\eps|^2}{2\eps} +\eta\left(\phi(x)+\phi(y_\eps)\right)+\frac{|t-s_\eps|^2}{2\eps}+\frac{b}{T-t}
	\]
	is a supertangent to $v(t,x)$ at the point $(t_\eps,x_\eps)$ and $v$ is a subsolution of \eqref{appA: eq HJ T}, while the function 
	\[
	\psi(s,y) \coloneqq v(t_\eps,x_\eps)-\frac{|x_\eps-y|^2}{2\eps}  -\eta\left(\phi(x_\eps)+\phi(y)\right)-\frac{|t_\eps-s|^2}{2\eps}-\frac{b}{T-s}
	\]
	 is a subtangent to $w(s,y)$ at the point $(s_\eps,y_\eps)$ and $w$ is a subsolution of \eqref{appA: eq HJ T}. We infer  
	\begin{eqnarray}\label{comp parabolic subsol2}
	\dfrac{t_\eps-s_\eps}{\eps}
	+
	H\left(x_\eps,    p_\eps+\eta D\phi(x_\eps)\right)&\leqslant& -c,\label{comp parabolic subsol2}\\
	\dfrac{t_\eps-s_\eps}{\eps}
	+
	H\left(y_\eps,    p_\eps-\eta D\phi(y_\eps)\right)&\geqslant& c,\label{comp parabolic subsol1}
	\end{eqnarray}
	where we have set
	\[
		c \coloneqq \frac{b}{T^2}
		\qquad \hbox{and}\qquad
		p_\eps \coloneqq \frac{x_\eps-y_\eps}{\eps}.
	\]
	By subtracting \eqref{comp parabolic subsol2} from \eqref{comp parabolic subsol1}, we get, according to assumptions 
	\eqref{appA: condition2 comp}-\eqref{appA: condition1 comp}, 
	\begin{align}\label{app estimate 3}
		0 
			< 2c
			\leqslant H\left(y_\eps,     p_\eps-\eta D\phi(y_\eps)\right)-H\left(x_\eps,     p_\eps+\eta D\phi(x_\eps)\right)\nonumber
			\leqslant \omega\big(|x_\eps-y_\eps|(1+   |p_\eps|)\big) + 2 \omega\left(\eta\right).
	\end{align}
	Sending $\eps\to 0^+$, we have that $\omega\big(|x_\eps-y_\eps|(1+   |p_\eps|)\big)$ vanishes in view of \eqref{parabolic limits approximating points}.
	Sending $\eta\to 0^+$, we have that $\omega\left(\eta\right)$ vanishes.
	But this is a contradiction since $0 < c$.
\end{proof}

Next, we provide a result which is usually used to prove the finite speed of propagation of first order HJ equations, see for instance \cite[Lemma 5.3]{barles2}. 

\begin{proposition}\label{appA: finite speed}
	Let  $H$ be a Hamiltonian satisfying \eqref{appA: condition1 comp} and condition {\rm (H2)},  for some constant $\beta>0$.
	Let $v\in\D{USC}([0,T]\times\M)$ and $w\in\D{LSC}([0,T]\times\M)$ be, respectively, a sub and a supersolution of \eqref{appA: eq HJ T}. Then the function $u \coloneqq v-w$ is an upper semicontinuous subsolution to 
	\begin{equation}\label{appA: geometric eq}
		\partial_t u -\beta|Du| = 0 \qquad \hbox{in $\Tcyl$.}
	\end{equation}
\end{proposition}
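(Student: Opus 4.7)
The plan is to prove this by the standard doubling-of-variables technique applied to the test function of $u = v - w$. Fix $(t_0, x_0) \in \Tcyl$ and let $\phi \in C^1(\Tcyl)$ be a supertangent to $u$ at $(t_0, x_0)$; after the usual perturbation (adding $|x - x_0|^4 + (t - t_0)^4$ to $\phi$) we may assume the maximum of $u - \phi$ over a small closed parabolic neighborhood $K$ of $(t_0, x_0)$ is strict and attained only at $(t_0, x_0)$.

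For $\eps > 0$, introduce the auxiliary functional
\[
	\Phi_\eps(t, x, s, y)
		\coloneqq v(t, x) - w(s, y) - \phi(t, x) - \frac{|x - y|^2}{2\eps} - \frac{(t - s)^2}{2\eps}
\]
on $K \times K$. Since $v$ is USC and $-w$ is USC, a maximizer $(t_\eps, x_\eps, s_\eps, y_\eps)$ exists. The classical argument (cf.\ \cite[Lemma 3.1]{users}) then yields, up to subsequences, $(t_\eps, x_\eps, s_\eps, y_\eps) \to (t_0, x_0, t_0, x_0)$ together with $|x_\eps - y_\eps|^2 / \eps \to 0$ and $(t_\eps - s_\eps)^2 / \eps \to 0$. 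For $\eps$ small enough, these maximizers lie in the interior of $K$, so we may use the viscosity inequalities.

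Applying the definition of subsolution to $v$ at $(t_\eps, x_\eps)$ with supertangent $\phi(t, x) + \frac{|x - y_\eps|^2}{2\eps} + \frac{(t - s_\eps)^2}{2\eps}$, and the definition of supersolution to $w$ at $(s_\eps, y_\eps)$ with subtangent $- \frac{|x_\eps - y|^2}{2\eps} - \frac{(t_\eps - s)^2}{2\eps}$, we obtain
\[
	\partial_t \phi(t_\eps, x_\eps) + \frac{t_\eps - s_\eps}{\eps} + H(x_\eps, D\phi(t_\eps, x_\eps) + p_\eps) \leqslant 0,
	\qquad
	\frac{t_\eps - s_\eps}{\eps} + H(y_\eps, p_\eps) \geqslant 0,
\]
where $p_\eps \coloneqq (x_\eps - y_\eps) / \eps$. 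Subtracting and writing $H(x_\eps, D\phi + p_\eps) = H(x_\eps, p_\eps) + [H(x_\eps, D\phi + p_\eps) - H(x_\eps, p_\eps)]$, the Lipschitz condition (H2) bounds the bracket by $\beta |D\phi(t_\eps, x_\eps)|$, while (H4) gives $H(x_\eps, p_\eps) - H(y_\eps, p_\eps) \geqslant -\omega\big(|x_\eps - y_\eps|(1 + |p_\eps|)\big)$. Combining yields
\[
	\partial_t \phi(t_\eps, x_\eps) - \beta |D\phi(t_\eps, x_\eps)| \leqslant \omega\!\left(|x_\eps - y_\eps|(1 + |p_\eps|)\right).
\]

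The main (and really only) subtlety is that $|p_\eps|$ need not be bounded as $\eps \to 0^+$, so one cannot a priori conclude $|x_\eps - y_\eps|(1 + |p_\eps|) \to 0$ from $|x_\eps - y_\eps| \to 0$ alone. This is resolved by the identity $|x_\eps - y_\eps| \cdot |p_\eps| = |x_\eps - y_\eps|^2 / \eps$, which vanishes in the limit by the standard convergence above. Passing $\eps \to 0^+$ and using continuity of $D\phi$ and $\partial_t \phi$ together with $\omega(0^+) = 0$, we conclude $\partial_t \phi(t_0, x_0) - \beta |D\phi(t_0, x_0)| \leqslant 0$, which is exactly the viscosity subsolution inequality for \eqref{appA: geometric eq} at $(t_0, x_0)$. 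Finally, $u = v - w$ is upper semicontinuous as the difference of a USC and an LSC function, completing the proof.
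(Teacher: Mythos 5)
Your proof is correct and follows essentially the same route as the paper: doubling of variables with the same auxiliary functional $\Phi_\eps$, the same pair of test functions and viscosity inequalities, the same use of (H2) and (H4) after subtracting, and the same key observation that $|x_\eps - y_\eps|(1+|p_\eps|) = |x_\eps-y_\eps| + |x_\eps-y_\eps|^2/\eps \to 0$ by the Crandall--Ishii--Lions Lemma~3.1. The only cosmetic difference is that you make the strict-maximum normalization explicit via the quartic perturbation, whereas the paper simply assumes it without loss of generality.
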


\begin{proof}
	Let $\varphi\in C^1(\Tcyl)$ be supertangent to $u$ in a point $(t_0,x_0)\in\Tcyl$ and let us assume that $(t_0,x_0)\in \Tcyl$ is a strict local maximum point of $u-\varphi$.  Let us choose $r>0$ such that the open ball $B \coloneqq B_{r}\big((t_0,x_0)\big)$ of radius $r>0$ centred at $(t_0,x_0)$ is compactly contained in $\Tcyl$ and $(t_0,x_0)$ is the only maximum point of $u-\varphi$ in $\overline B$. Let us introduce the function 
	\[
		\Phi(t,x,s,y) \coloneqq v(t,x)-w(s,y)-\frac{|x-y|^2}{2\eps}-\frac{|t-s|^2}{2\eps} - \varphi(t,x)
			\qquad\hbox{ for $(t, x, s, y) \in \overline B \times \overline B$.}
	\]
	By uppersemicontinuity of $\Phi$ and compactness of the domain, the maximum of $\Phi$ on $\overline B\times\overline B$ is attained at (at least) a point $(t_\eps,x_\eps,s_\eps,y_\eps)\in \overline B\times\overline B$. In view of \cite[Lemma 3.1]{users}, we infer that 
	\begin{equation}\label{appA: limit points}
		\lim_{\eps\to 0} (t_\eps,x_\eps,s_\eps,y_\eps)=(t_0,x_0,t_0,x_0)
		\qquad\hbox{and}\qquad
		\lim_{\eps\to 0}\frac{|x_\eps-y_\eps|^2}{\eps}=0
	\end{equation}
	Choose $\eps_0>0$ small enough so that $(x_\eps,t_\eps)$, $(y_\eps,s_\eps)$ belong to $B$ for every $\eps\in (0,\eps_0)$. The function 
	\[
	\psi_1(t,x) \coloneqq w(s_\eps,y_\eps)+\frac{|x-y_\eps|^2}{2\eps}+\frac{|t-s_\eps|^2}{2\eps}+\varphi(t,x) 
	\]
	is a supertangent to $v(t,x)$ at $(t_\eps,x_\eps)$ and $v$ is a subsolution to \eqref{appA: eq HJ T}, hence 
	\begin{equation}\label{appA: eq speed1}
	\dfrac{t_\eps-s_\eps}{\eps}+\partial_t \varphi  (t_\eps,x_\eps)+H\left(x_\eps,p_\eps+D_x \varphi (t_\eps,x_\eps)\right)\leqslant 0,
	\end{equation}
	where we have set $\displaystyle p_\eps \coloneqq \frac{x_\eps-y_\eps}{\eps}$. Analogously, 
	\[
		\psi_2(s,y) \coloneqq v(t_\eps,x_\eps)-\frac{|x_\eps-y|^2}{2\eps}-\frac{|t_\eps-s|^2}{2\eps}-\varphi(t_\eps,x_\eps)
	\]
	is a subtangent to $w(s,y)$ at the point $(s_\eps,y_\eps)$  and $w$ is a supersolution to \eqref{appA: eq HJ T}, hence 
	\begin{equation}\label{appA: eq speed2}
		\dfrac{t_\eps-s_\eps}{\eps}+H\left(y_\eps,p_\eps\right)\geqslant 0,
	\end{equation}
	By subtracting \eqref{appA: eq speed2} from \eqref{appA: eq speed1} and by taking into account conditions {\rm (H2)} and \eqref{appA: condition1 comp}, we get 
	\begin{eqnarray*}
		0 
			&\geqslant& 
			\partial_t \varphi  (t_\eps,x_\eps)+H\left(x_\eps,p_\eps+D_x \varphi (t_\eps,x_\eps)\right)
			-
			H\left(y_\eps,p_\eps\right)\\
			&\geqslant& 
			\partial_t \varphi  (t_\eps,x_\eps)-\beta|D_x \varphi (t_\eps,x_\eps)|+H\left(x_\eps,p_\eps\right)
			-
			H\left(y_\eps,p_\eps\right)\\
			&\geqslant& 
			\partial_t \varphi  (t_\eps,x_\eps)-\beta|D_x \varphi (t_\eps,x_\eps)|
			-\omega\big(|x_\eps-y_\eps|(1+|p_\eps|)\big).
	\end{eqnarray*}
	Now we send $\eps\to 0^+$ to get, in view of \eqref{appA: limit points}, 
	\[
		0\geqslant \partial_t \varphi  (t_0,x_0)-\beta|D_x \varphi (t_0,x_0)|,
	\]
	as it was to be shown.
\end{proof}

With the aid of the previous proposition, we can prove the following version of the Comparison Principle for unbounded sub and supersolutions.

\begin{theorem}\label{appA: teo comp2}
	Let $H$ be a Hamiltonian satisfying assumptions {\rm (H2)} and \eqref{appA: condition1 comp}. 
	Let $v \in \D{USC}([0, T] \times \M)$ and $w \in \D{LSC}([0, T] \times \M)$ be, respectively, a sub and a supersolution of \eqref{appA: eq HJ T}.
	Then,
	\[
		v(t,x)-w(t,x)\leqslant \sup_{\M}\big(v(0,\cdot)-w(0,\cdot)\big)
			\qquad\hbox{ for every  $(t,x)\in (0,T)\times \M$.} 
	\]
\end{theorem}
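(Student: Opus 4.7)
My strategy is to reduce the claim to a maximum principle for the geometric equation $\partial_t u - \beta|Du| = 0$, by first applying Proposition~\ref{appA: finite speed}. Since $v$ is an USC subsolution and $w$ an LSC supersolution of \eqref{appA: eq HJ T}, Proposition~\ref{appA: finite speed} yields that $u := v - w \in \D{USC}(\cTcyl)$ is a subsolution of $\partial_t u - \beta|Du| = 0$ in $\Tcyl$. Setting $M := \sup_\M(v(0,\cdot) - w(0,\cdot))$, which we may assume finite (otherwise the statement is vacuous), we have $u(0, \cdot) \leqslant M$ on $\M$, and the statement reduces to proving $u(t, x) \leqslant M$ for every $(t, x) \in \cTcyl$.

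Fix $(t_0, x_0) \in \Tcyl$. For parameters $\eta, \mu > 0$, I would compare $u$ with the smooth auxiliary function
\[
	\phi_{\eta,\mu}(t, x) := M + \eta\sqrt{|x - x_0|^2 + 1} + 2\eta\beta t + \frac{\mu}{T - t}.
\]
A direct calculation gives $\partial_t \phi_{\eta,\mu} - \beta|D\phi_{\eta,\mu}| \geqslant \beta\eta + \mu/(T - t)^2 > 0$ pointwise, so $\phi_{\eta,\mu}$ is a strict classical supersolution of the geometric equation; moreover, $\phi_{\eta,\mu}(0, \cdot) \geqslant M \geqslant u(0, \cdot)$. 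Following the doubling-of-variables scheme of Theorem~\ref{appA: teo comp}, I would argue by contradiction: if $u(t,x) > \phi_{\eta,\mu}(t,x)$ somewhere, the $\mu/(T-t)$ term would bound the time coordinate away from $T$, the $\eta\sqrt{|x-x_0|^2+1}$ term would localize the $x$-coordinate, and, at an approximate maximum point of the doubled functional, the subsolution inequality for $u$ combined with the strict supersolution inequality for $\phi_{\eta,\mu}$ would give a contradiction after sending the doubling parameter to $0$. Evaluating the resulting bound $u \leqslant \phi_{\eta,\mu}$ at $(t_0, x_0)$ and then letting $\mu, \eta \to 0^+$ yields $u(t_0, x_0) \leqslant M$.

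The main obstacle I anticipate is the existence of such an approximate maximum, since $u$ has no a priori growth control at infinity: the linear-in-$x$ coercivity of $\phi_{\eta,\mu}$ (with slope $\eta$) is insufficient to localize the supremum if $u$ grows faster than $\eta|x|$, which is an unacceptable restriction given that $\eta \to 0^+$. The natural workaround is to exploit the finite-speed-of-propagation content of Proposition~\ref{appA: finite speed} more directly by restricting attention to the backward cone $\{(t, x) : |x - x_0| \leqslant R + \beta(t_0 - t)\}$, with $R \gg \beta t_0$, on which $u$ is USC and therefore locally bounded above; a cone-tailored barrier, in the spirit of the construction sketched in \cite[Remark~5.3]{barles2} and explicitly referenced in the acknowledgements, can then propagate the initial bound along this cone. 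This cone localization is the delicate technical step distinguishing the extension from bounded to unbounded sub/supersolutions.
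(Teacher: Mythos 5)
Your first step --- applying Proposition~\ref{appA: finite speed} to reduce to showing that the USC subsolution $u := v - w$ of the geometric equation $\partial_t u - \beta|Du| = 0$ satisfies $u \leqslant M := \sup_\M(v(0,\cdot)-w(0,\cdot))$ --- is exactly the paper's starting point. After that the routes diverge, and your proposal has a genuine gap that you yourself correctly diagnose but do not close.

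The barrier $\phi_{\eta,\mu}(t,x) = M + \eta\sqrt{|x-x_0|^2+1} + 2\eta\beta t + \mu/(T-t)$ is a strict supersolution, but its spatial growth is only linear with slope $\eta$, and $\eta$ must be sent to $0^+$ at the end. Since $u = v - w$ for $v\in\D{USC}$ and $w\in\D{LSC}$ with no assumed growth bound, the function $u - \phi_{\eta,\mu}$ need not tend to $-\infty$ as $|x|\to\infty$, so the doubling scheme has no point to anchor to; in fact $\sup(u-\phi_{\eta,\mu})$ may well be $+\infty$. Your proposed fix --- restricting to a backward cone $|x - x_0| \leqslant R + \beta(t_0 - t)$ and using a ``cone-tailored barrier'' --- is plausible in spirit but is left entirely unexecuted, and carrying it out would require establishing a comparison principle on the truncated cone while ruling out that the maximum sits on the lateral boundary (where $u$ is still uncontrolled). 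That is not a routine addendum; it is the whole content of the step.

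The paper closes the gap with a much simpler device that avoids any localization in space. Pick a bounded, $C^1$, strictly increasing function $\Psi : \R \to \R$ with $\Psi(0) = 0$, and, having reduced to $M = 0$ by adding a constant to $v$, consider $\Psi\circ u$. Because the geometric Hamiltonian $p \mapsto -\beta|p|$ is positively $1$-homogeneous, the chain rule in the viscosity sense shows that $\Psi\circ u$ is again an USC subsolution of the same equation (the factor $(\Psi^{-1})'(\varphi) > 0$ scales both the time derivative and the Hamiltonian and then cancels). Now $\Psi\circ u$ is bounded and the zero function is a bounded supersolution, so Theorem~\ref{appA: teo comp} applies directly and gives $\Psi\circ u \leqslant 0$, hence $u \leqslant 0$ by strict monotonicity of $\Psi$. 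This turns the unbounded comparison into the bounded one already proved, rather than re-deriving a comparison estimate from scratch via a barrier. I encourage you to rewrite the second half of your argument along these lines.
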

\begin{proof}
We can assume $\sup_{\M}\big(v(0,\cdot)-w(0,\cdot)\big)<+\infty$, otherwise the assertion is trivially satisfied. Up to adding an appropriate constant to $v$, we can furthermore assume, without loss of generality, that 
$\sup_{\M}\big(v(0,\cdot)-w(0,\cdot)\big)=0$. In view of Proposition~\ref{appA: finite speed}, the function $u \coloneqq v-w$ is an upper semicontinuous subsolution of \eqref{appA: geometric eq} satisfying $u(0,\cdot)\leqslant 0$ in $\M$. Let $\Psi \colon \R\to\R$ be a $C^1$, strictly increasing and bounded function satisfying $\Psi(0)=0$. It is easily seen, due to the positive 1--homogeneity of equation \eqref{appA: geometric eq}, that the function {\bluee $v \coloneqq (\Psi\circ u)(t,x)$ is a bounded, upper semicontinuous subsolution to \eqref{appA: geometric eq} satisfying $v(0,\cdot)\leqslant 0$ on $\M$.} By applying Theorem~\ref{appA: teo comp} 
{\bluee with $w(t,x)\equiv 0$}, 
we derive that $\bluee v=(\Psi\circ u)\leqslant w=0=\Psi(0)$ in $\cTcyl$. The assertion follows by the strict monotonicity of $\Psi$. 
\end{proof}

\subsection{Existence results and Lipschitz estimates for solutions}\label{app:existence results} 
Throughout this subsection, we will assume that the (deterministic) Hamiltonian $H$ satisfying the following assumptions for some constants $\beta_1,\beta_2,\beta_3>0$:\smallskip
\begin{itemize}
\item[(H1$^*$)] $|H(x,p )|\leqslant \beta_1\left(1+|p|\right) \qquad \qquad \qquad \qquad\  \text{for all}\ (x, p) \in \TM$;\smallskip
\item[(H2$^*$)] $|H(x,p ) - H(x,q )| \leqslant \beta_2 |p - q| \qquad\qquad \text{for all}\ x, p, q\in\M$;\smallskip
\item[(H3$^*$)] $|H(x,p ) - H(y,p )| \leqslant \beta_3 |x - y| \qquad \quad \quad\text{for all}\ x, y, p\in\M$.
\end{itemize}
\smallskip

We begin with the following existence and uniqueness result, where the uniqueness part is guaranteed by Theorem~\ref{appA: teo comp2}.

\begin{theorem}
\label{teo appendix existence solution}
	Let $g \in\UC(\M)$. For every $T>0$,  the following problem 
	\begin{align*}\tag{HJP}
	\label{eq app HJP}
	\left\{ \begin{aligned}
			&\partial_t u + H(x, Du) = 0 &\hbox{in $\Tcyl$} \\
			&u(0, \cdot) = g(\cdot) &\hbox{on $\RR^d$}
		\end{aligned}
	\right.
	\end{align*}
has a unique viscosity solution in $\CC(\cTcyl)$. Furthermore, this solution belongs to $\UC(\cTcyl)$. 
\end{theorem}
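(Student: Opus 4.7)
The uniqueness part is immediate: given any two continuous viscosity solutions $u_1,u_2$ of \eqref{eq app HJP}, I would apply Theorem~\ref{appA: teo comp2} twice, with $(v,w) = (u_1,u_2)$ and then with $(v,w) = (u_2,u_1)$. Since $u_1(0,\cdot) = u_2(0,\cdot) = g$, the right-hand side of the comparison inequality vanishes and $u_1 \equiv u_2$ on $\cTcyl$.

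For existence, the plan is to first treat Lipschitz initial data and then pass to the general case by an approximation/comparison argument. Suppose first that $g$ is Lipschitz with constant $L$, and set $C \coloneqq \beta_1(1 + L)$. I claim that $v(t,x) \coloneqq g(x) - C t$ and $w(t,x) \coloneqq g(x) + C t$ are respectively a sub- and a supersolution of $\partial_t u + H(x,Du) = 0$ on $\Tcyl$. Indeed, if $\phi$ is a supertangent to $v$ at $(t_0,x_0)$, then $\partial_t \phi(t_0,x_0) = -C$ and $D_x\phi(t_0,x_0) \in D^+ g(x_0)$, so $|D_x\phi(t_0,x_0)| \leqslant L$; using (H1$'$) gives $\partial_t\phi + H(x_0,D_x\phi) \leqslant -C + \beta_1(1+L) \leqslant 0$, and a symmetric computation handles $w$. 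Since $v \leqslant w$ on $\cTcyl$ with $v(0,\cdot) = w(0,\cdot) = g$, Perron's method (in the continuous formulation; see e.g.\ \cite{users}) combined with Theorem~\ref{appA: teo comp2} produces a unique viscosity solution $u \in \CC(\cTcyl)$ trapped between $v$ and $w$. The trap forces $|u(t,x) - g(x)| \leqslant C t$, which yields continuity up to $\{t = 0\}$ and shows $u(0,\cdot) = g$.

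To extend to general $g \in \UC(\M)$, I would use the standard sup/inf convolutions $g_n(x) \coloneqq \sup_{y \in \M}\{g(y) - n|x - y|\}$, which are Lipschitz (with constants that blow up) and converge to $g$ uniformly on $\M$, with $\|g_n - g\|_\infty \leqslant \omega_g(1/n)$ for a modulus $\omega_g$ of $g$. Let $u_n \in \CC(\cTcyl)$ denote the solution produced in the previous step for initial datum $g_n$. Applying Theorem~\ref{appA: teo comp2} to $u_n$ and $u_m$ yields $\|u_n - u_m\|_{L^\infty(\cTcyl)} \leqslant \|g_n - g_m\|_\infty \to 0$, so $(u_n)$ is Cauchy in $\CC(\cTcyl)$ for the sup norm and converges uniformly to some $u \in \CC(\cTcyl)$ with $u(0,\cdot) = g$. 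Standard stability of viscosity solutions under uniform convergence gives that $u$ solves \eqref{appA: eq HJ T}.

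Finally, uniform continuity of $u$ will be established by a translation-comparison argument. For $y \in \M$, the function $(t,x) \mapsto u(t,x-y)$ is a viscosity solution of $\partial_t + H(\cdot - y, D\cdot) = 0$; by (H3$'$) this perturbed Hamiltonian differs from $H$ by at most $\beta_3|y|$, so inserting $\beta_3|y|\,t$ as a time correction and applying Theorem~\ref{appA: teo comp2} yields $|u(t,x-y) - u(t,x)| \leqslant \|g(\cdot - y) - g\|_\infty + \beta_3|y|\,t \leqslant \omega_g(|y|) + \beta_3|y|T$ on $\cTcyl$, giving a space-modulus uniform in $t \in [0,T]$. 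For the time-modulus, I would first bound $\|u(h,\cdot) - g\|_\infty$ as $h \to 0^+$: using the Lipschitz approximations $g_n$ and the barriers $|u_n(h,\cdot) - g_n| \leqslant \beta_1(1 + L_n) h$, then $\|u(h,\cdot) - g\|_\infty \leqslant 2\omega_g(1/n) + \beta_1(1 + L_n) h$, which tends to $0$ as $h \to 0^+$ by first choosing $n$ large and then $h$ small. A further application of Theorem~\ref{appA: teo comp2} to $u(\cdot + h, \cdot)$ and $u(\cdot, \cdot)$ then transfers this modulus to all times, yielding $u \in \UC(\cTcyl)$. The main subtlety lies precisely here: because $H$ is not coercive, the Lipschitz constants of the $u_n$ in $x$ blow up with $n$, so the uniform continuity cannot be read off from a uniform gradient bound and must instead be obtained by the optimized two-parameter estimate sketched above.
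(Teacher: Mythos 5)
Your argument is correct, but it follows a genuinely different route from the paper's. The paper reduces the claim to the known $\BUC$ case by citing~\cite[Theorem~7.1]{barles2}: for $g\in\UC(\M)$ it fixes $\psi\in\CC^{1,1}(\M)\cap\Lip(\M)$ with $\|\psi-g\|_\infty\le 1$, observes that $g-\psi\in\BUC(\M)$ and that $\tilde H(x,p)\coloneqq H(x,D\psi(x)+p)$ still satisfies (H1$'$)--(H3$'$), solves the shifted Cauchy problem, and recovers $u\coloneqq\tilde u+\psi$. That is a two-line reduction, but it leans on an external well-posedness theorem and on the fact that a $\UC$ function admits a $C^{1,1}$ Lipschitz approximation in $L^\infty$. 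You instead build everything from scratch: barriers $g\pm\beta_1(1+L)t$ and Perron's method for Lipschitz data, $\sup$-convolutions and the Cauchy estimate from Theorem~\ref{appA: teo comp2} to pass to $\UC$ data, and a translation-comparison argument (in $x$ via (H3$'$), in $t$ via a two-parameter optimization over the Lipschitz regularizations) to establish the uniform modulus. Your approach is longer but self-contained, makes the non-coercivity issue explicit (the Lipschitz constants of the approximating solutions blow up, so the modulus must come from comparison rather than gradient bounds), and is arguably more instructive. Two cosmetic remarks: the bound $\|g_n-g\|_\infty\le\omega_g(1/n)$ for the $\sup$-convolution holds once $n$ is large enough and the modulus $\omega_g$ is taken concave (in general one only gets $\sup_r\{\omega_g(r)-nr\}\to 0$), so it is worth stating which normalization of $\omega_g$ you use; and the Perron step in unbounded domains requires the strong comparison principle Theorem~\ref{appA: teo comp2} precisely because no $L^\infty$ bound on solutions is available, which your invocation correctly relies on but could be spelled out.
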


\begin{proof}
The case $g\in\BUC(\M)$ is proved in  \cite[Theorem 7.1]{barles2}. 
Let us then assume $g\in\UC(\M)$. Pick 
$\psi\in\CC^{1,1}(\M)\cap \Lip(\M)$  such that $\|\psi-g\|_{\infty}\leqslant 1$. In view of the previous step, the Cauchy problem   \eqref{eq app HJP} with $\tilde H(x,p) \coloneqq H(x,D\psi(x)+p)$ and $g-\psi$ in place of $H$ and $g$, respectively, admits a unique solution $\tilde u(t,x)$ in $\CC(\cTcyl)$. Furthermore,  
$\tilde u(t,x)$ in $\UC(\cTcyl)$.
We derive that $u(t,x) \coloneqq \tilde u(t,x)+\psi(x)$ belongs to $\UC(\cTcyl)$ and is a solution of the original Cauchly problem \eqref{eq app HJP}.  
\end{proof}

We proceed to show suitable Lipschitz bounds for the solution of \eqref{eq app HJP} when the initial datum is Lipschitz. 

\begin{theorem}\label{teo app Lip estimates}
Let $g\in\Lip(\M)$ and let $u$ be the unique continuous function in $\ccyl$ which solves the Cauchy problem \eqref{eq app HJP} for every $T>0$. 
Then $u$ is Lipchitz in $\cTcyl$ for every $T>0$. More precisely 
\[
\| D_x u\|_{L^\infty(\ccTcyl)}\leqslant \left(\beta_3 {\bluee T}+\Lip(g)\right),
\qquad
\|\partial_t u\|_{L^\infty(\ccTcyl)}\leqslant \beta_1 \left(1+\|Dg\|_\infty\right).
\]
\end{theorem}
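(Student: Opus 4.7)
The plan is to derive both Lipschitz estimates via the Comparison Principle (Theorem~\ref{appA: teo comp2}) applied to well-chosen auxiliary sub- and supersolutions of \eqref{appA: eq HJ T}. Observe first that (H3$'$) implies the modulus condition \eqref{appA: condition1 comp} with $\omega(s) = \beta_3 s$, so Theorem~\ref{appA: teo comp2} applies and no boundedness of the (semi)solutions is required. All auxiliary functions introduced below will be continuous (in particular USC and LSC).

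For the space-Lipschitz bound, I fix $h \in \M$ and set $V(t, x) \coloneqq u(t, x + h) + \beta_3 |h|\, t + \Lip(g) |h|$. Since $(t, x) \mapsto u(t, x + h)$ solves $\partial_t w + H(x + h, D_x w) = 0$, a direct test-function computation using (H3$'$) shows that the additional linear-in-$t$ drift $\beta_3 |h|$ is exactly what is needed to turn $V$ into a supersolution of \eqref{appA: eq HJ T}: for any subtangent $\phi$ to $V$ at $(t_0, x_0)$, the function $\tilde\phi \coloneqq \phi - \beta_3 |h|\, t - \Lip(g)|h|$ is a subtangent to $u(\cdot, \cdot + h)$, and combining the supersolution inequality at $(x_0 + h)$ with (H3$'$) yields $\partial_t \phi + H(x_0, D_x \phi) \geq 0$. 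Since $V(0, x) = g(x + h) + \Lip(g) |h| \geq g(x) = u(0, x)$, Theorem~\ref{appA: teo comp2} gives $u \leq V$, and exchanging $h$ with $-h$ delivers the bound $\|D_x u\|_{L^\infty(\ccTcyl)} \leq \beta_3 t + \Lip(g)$.

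For the time-Lipschitz bound I proceed in two steps. Setting $C \coloneqq \beta_1(1 + \|Dg\|_\infty)$, I claim that $\psi_\pm(t, x) \coloneqq g(x) \pm C t$ are, respectively, a supersolution and a subsolution of \eqref{appA: eq HJ T}. For $\psi_+$: if $\phi \in C^1(\Tcyl)$ is a subtangent to $\psi_+$ at $(t_0, x_0)$, then freezing $x = x_0$ forces $\partial_t \phi(t_0, x_0) = C$, while freezing $t = t_0$ shows that $\phi(t_0, \cdot) - C t_0$ is a subtangent to $g$ at $x_0$, so $D_x \phi(t_0, x_0)$ lies in the subdifferential of the Lipschitz function $g$; in particular $|D_x \phi(t_0, x_0)| \leq \|Dg\|_\infty$. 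Assumption (H1$'$) then yields
\[
\partial_t \phi(t_0, x_0) + H(x_0, D_x \phi(t_0, x_0)) \geq C - \beta_1(1 + \|Dg\|_\infty) = 0,
\]
as required; the argument for $\psi_-$ is symmetric. Since $u(0, \cdot) = g = \psi_\pm(0, \cdot)$, comparison gives $|u(t, x) - g(x)| \leq C t$. Finally, the time-autonomy of \eqref{appA: eq HJ T} implies that, for every $\tau \geq 0$, $(s, x) \mapsto u(\tau + s, x)$ is again a solution with initial datum $u(\tau, \cdot)$; applying Theorem~\ref{appA: teo comp2} in both directions yields $\|u(\tau + s, \cdot) - u(s, \cdot)\|_\infty \leq \|u(\tau, \cdot) - g\|_\infty \leq C \tau$, hence $\|\partial_t u\|_{L^\infty(\ccTcyl)} \leq C = \beta_1(1 + \|Dg\|_\infty)$.

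The only delicate point in the argument is the verification that $\psi_\pm$ are viscosity sub/supersolutions against a merely Lipschitz (not $C^1$) initial datum $g$; this is resolved by the elementary subdifferential bound $|D_x \phi(t_0, x_0)| \leq \|Dg\|_\infty$ used above. All remaining steps are standard test-function manipulations combined with the Comparison Principle.
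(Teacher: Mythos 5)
Your proposal is correct. The space-Lipschitz estimate follows the same route as the paper: compare $u$ with a translate $u(\cdot,\cdot+h)$ shifted by a linear-in-$t$ drift of slope $\beta_3|h|$, using (H3$'$) to verify the semi-solution property and Theorem~\ref{appA: teo comp2} to conclude. For the time-Lipschitz estimate, however, you take a genuinely cleaner path. The paper first assumes $g\in\Lip(\M)\cap\CC^1(\M)$, uses $|H(x,Dg(x))|\leqslant\beta_1(1+\|Dg\|_\infty)$ to show that $g\pm Mt$ are classical sub/supersolutions, and then recovers the general Lipschitz case by mollification of $g$, Ascoli--Arzel\`a, and stability of viscosity solutions. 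You instead verify directly, for merely Lipschitz $g$, that $g(x)\pm Ct$ are viscosity semi-solutions: given a $C^1$ subtangent $\phi$ to $\psi_+$ at $(t_0,x_0)$, freezing $x=x_0$ pins $\partial_t\phi(t_0,x_0)=C$, while freezing $t=t_0$ makes $\phi(t_0,\cdot)$ a $C^1$ subtangent to $g$ at $x_0$, forcing $|D_x\phi(t_0,x_0)|\leqslant\Lip(g)$; then (H1$'$) closes the estimate. This is exactly the right elementary observation, and it eliminates the approximation step entirely. The final step (time-translation invariance plus two applications of comparison to promote the bound at $t=0$ to a global time-Lipschitz bound) matches the paper. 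Overall both arguments are valid; yours is marginally shorter and more self-contained on the time side, at the expense of one extra sentence of viscosity-theoretic care.
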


\begin{proof}
(i) Let us fix $\hh\in\M$ and set 
\[
v_\hh(t,x) \coloneqq u(t,x+\hh)-\beta_3|\hh| t,
\quad
w_\hh(t,x) \coloneqq u(t,x+\hh)+\beta_3|\hh| t
\qquad
\hbox{for every $(t,x)\in\ccyl$.}
\]
By exploiting assumption (H2$^*$), it is easily seen that $v_h$ and $w_h$ are, respectively, a viscosity sub and supersolution to \eqref{appA: eq HJ T}. 
Indeed, the following inequalities hold in the viscosity sense:
\begin{eqnarray*}
\partial_t v_h+H(x,D v_h)
&=&
\partial_t u(t,x+h)-\beta_3|h|+H(x,D u(t,x+h))\\
&\leqslant& 
\partial_t u(t,x+h)+H(x+h,D u(t,x+h))
\leqslant 
0
\qquad
\hbox{in $\Tcyl$},
\end{eqnarray*}
thus showing that $v_h$ is a subsolution of \eqref{appA: eq HJ T}. The assertion for $w_h$ can be proved analogously.  
By the Comparison Principle, see Theorem~\ref{appA: teo comp2}, we infer that, for all $(t,x)\in\cTcyl$, 
\begin{eqnarray*}
u(t,x)-v_h(t,x) &\leqslant& u(0,x)-v_h(0,x)=g(x)-g(x+h)\leqslant \Lip(g)|h|\\
w_h(t,x)-u(t,x) &\leqslant& w_h(0,x)-u(0,x)=g(x+h)-g(x)\leqslant \Lip(g)|h|,
\end{eqnarray*}
namely 
\begin{equation}\label{eq app x-Lip bound}
|u(t,x+h)-u(t,x)|\leqslant \left(\beta_3 t+\Lip(g)\right) |h|\qquad \hbox{for all $(t,x)\in\ccyl$,}
\end{equation}
thus showing the first assertion.\smallskip

(ii) Let us first assume that $g\in\Lip(\M)\cap\CC^1(\M)$. By assumption (H1$^*$) we have 
\begin{equation}\label{eq app pre t-Lip bound}
|H(x,Dg(x))| 
\leqslant
\beta_1 \left(1+ |Dg(x)|\right) 
\leqslant 
\beta_1 \left(1+ \|Dg\|_\infty\right) 
\qquad
\hbox{for all $x\in\M$.}
\end{equation}
For notational simplicity, let us denote by $M$ the most right-hand side term in the above inequality. It is easily seen that the functions 
\[
\underline u(t,x) \coloneqq g(x)-Mt,
\quad
\overline u(t,x) \coloneqq g(x)+Mt,
\qquad
(t,x)\in\ccyl,
\]
are, respectively, a sub and a supersolution of  \eqref{eq app HJP} for every $T>0$. By the Comparison Principle, see Theorem~\ref{appA: teo comp2}, we infer that 
$
\underline u(t,x)
\leqslant 
u(t,x)
\leqslant 
\overline u(t,x)
\ 
\hbox{for all $(t,x)\in\ccyl$},
$
i.e., 
\[
\|u(t,\cdot)-g\|_\infty \leqslant Mt\qquad\hbox{for all $t\geqslant 0$.}
\]
By applying the Comparison Principle again we get 
\begin{equation}\label{eq app t-Lip estimate}
\|u(t+h,\cdot)-u(t,\cdot)\|_\infty 
\leqslant 
\|u(h,\cdot)-u(0,\cdot)\|_\infty 
\leqslant 
Mh
= 
\beta_1 \left(1+ \|Dg\|_\infty\right)\,h 
\quad\hbox{for all $t,h\geqslant 0$,}
\end{equation}
meaning that $u$ is $\beta_1 \left(1+ \|Dg\|_\infty\right)$--Lipschitz in $t$.

The case $g\in\Lip(\M)$ can be obtained by approximation. Let us denote by $g_n$ the mollification of $g$ via a standard convolution kernel and by $u_n$ the solution to the Cauchy problem \eqref{eq app HJP} with $g_n$ in place of $g$. Since $\|Dg_n\|_\infty\leqslant \|Dg\|_\infty$ for every $n\in\N$, we deduce from what proved above that the functions $u_n$ are $\beta_1 \left(1+ \|Dg\|_\infty\right)$--Lipschitz in $t$ and $\left(\beta_3 t+\Lip(g)\right)$--Lipschitz in $x$, for every $n\in\N$. By the Ascoli--Arzel\`a Theorem, {\bluee the stability of the notion of viscosity solution and the uniqueness of the continuous solution to the Cauchy problem associated with \eqref{eq app HJP}}, we infer that the functions $(u_n)_n$ are converging, locally uniformly in $\ccyl$, to the solution $u$ of  \eqref{eq app HJP} with initial datum $g$. We derive that $u$ satisfies \eqref{eq app t-Lip estimate} as well, as it was to be shown. 
\end{proof}

\subsection{Differential Games}\label{app:PDE} Throughout this subsection, we will  work with a deterministic Hamiltonian $H$ of the form 
\[
	H(x,p) \coloneqq \max_{b \in B} \min_{a \in A} \left\{ -\ell(x, a, b) - \langle f(a,b), p \rangle \right\} \qquad \text{for all } (x,p) \in \TM,
\]
where $A, B$ are compact subsets of $\R^m$, for some integer $m$, $f \colon A\times B \to \R^d$ is a continuous {\bluee vector--valued function}, and the  running cost $\ell \colon \R^d \times A \times B \to \R$ is a bounded and continuous function satisfying assumption ($\ell_2$) appearing in Section~\ref{sec:assumptions}. We shall denote by $\| \ell \|_\infty$ the $L^\infty$--norm of $\ell$ on  $\R^d \times A \times B$. 
The Hamiltonian $H$ clearly satisfies the properties (H1)--(H3) listed in Section~\ref{sec:assumptions}. 

For such a Hamiltonian, the solutions to the Cauchy problem \eqref{eq app HJP} can be represented trough a max-min formula provided by Differential Game Theory, see~\cite{EvSo84}. To this aim, we first observe that $u(t,x)$ is a viscosity solution of \eqref{eq app HJP} if and only $\check u (t,x) \coloneqq -u(T-t,x)$ is a viscosity solution of equation \eqref{appA: eq HJ T} with $\check H(x,p) \coloneqq H(x,-p)$ in place of $H$ in the sense adopted in~\cite{EvSo84}, see items (a)-(b) at the end of page 774, and satisfying the terminal condition $\check u(T,x)=-g(x)$, cf. \cite[problem (HJ)]{EvSo84}. 
Let us denote by 
\begin{align*}
	\AA(T) \coloneqq \left\{ a \colon [0,T] \to A \, : \, a \text{ measurable} \right\},\qquad
	\BB(T) \coloneqq \left\{ b \colon [0,T] \to B \, : \, b \text{ measurable} \right\}.
\end{align*}
The sets $A$ and $B$ are to be regarded as action sets for Player~1 and~2, respectively. 
A {\em nonanticipating strategy} for Player~1 is a function $\alpha \colon \BB(T) \to \AA(T)$ such that, 
for all $b_1, b_2 \in \BB(T)$ and $\tau \in [0, T]$, 
\[
	b_1(\cdot) = b_2(\cdot) \quad \text{in } [0,\tau]
		\quad
		\Rightarrow
		\quad
	\alpha[b_1] (\cdot) = \alpha[b_2](\cdot) \quad \text{in } [0,\tau] \,.
\]
We will denote by $\Gamma(T)$ the family of such nonanticipating strategies for Player~1. 
For every $(t,x)\in\cyl$, the value function associated with this Differential Game is defined as  
\begin{equation}
\label{def app value function}
	v(t,x) \coloneqq \sup_{\alpha \in \Gamma(t)}\inf_{b \in \BB(t)} \left\{
		\int_0^t \ell(y_x(s),\alpha[b](s),b(s))\,ds + g(y_x(t)) 
	\right\},
\end{equation}
where $y_x \colon [0,t] \to \M$ is the solution of the ODE 
\begin{align}\tag{ODE}\label{eq ODE}
	\begin{cases}
		\dot y_x(s) = f(\alpha[b](s),b(s))\qquad\hbox{in $[0,t]$} \\
		y_x(0) = x.
	\end{cases}
\end{align} 

\begin{prop}\label{prop properties value function}
\ 
\begin{itemize}
\item[{\em (i)}] Let $g\in W^{1,\infty}(\M)$.   Then $v\in W^{1,\infty}(\cTcyl)$ for every fixed $T>0$. 
More precisely, for every $x, \hat{x}\in\M$ and $t, \hat{t} \in (0, T)$, we have 
\begin{align*}
		&|v(t,x)|
			\leqslant T  \| \ell \|_\infty + \|g\|_\infty \,,\\
		&\bluee |v(t, x) - v(\hat{t}, \hat{x})|
			\leqslant \left( T\Lip(\ell)+ \Lip(g) \right) |x - \hat{x}| 
				+ 	
				\Big( \| \ell \|_\infty + \| f \|_\infty \big(T\,\Lip(\ell)+\Lip(g) \big)\Big) |t - \hat{t}| \,.
	\end{align*}
\item[{\em (ii)}] Let $g\in \BUC(\M)$. Then $v\in \BUC(\cTcyl)$ for every fixed $T>0$.
\end{itemize}
\end{prop}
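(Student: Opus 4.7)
The plan is to establish part (i) by direct estimates on the Bolza functional in \eqref{def app value function}, then deduce part (ii) by uniform approximation. The $L^\infty$ bound is immediate: for any $(\alpha,b) \in \Gamma(t)\times\BB(t)$ the integral is bounded in absolute value by $t\|\ell\|_\infty$ and the terminal term by $\|g\|_\infty$, and $\sup$-$\inf$ preserves this. For the spatial Lipschitz estimate, the crucial observation is that $f=f(a,b)$ is independent of the state, so two solutions of \eqref{eq ODE} starting at $x$ and $\hat x$ with the same driving pair $(\alpha[b],b)$ satisfy $y_x(s)-y_{\hat x}(s) = x-\hat x$ for all $s\in[0,t]$. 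Combining ($\ell_2$) with the Lipschitz bound on $g$, the two Bolza functionals with the same $(\alpha,b)$ differ by at most $(t\Lip(\ell)+\Lip(g))|x-\hat x|$, and the elementary inequality $|\sup_\alpha\inf_b F - \sup_\alpha\inf_b G| \le \sup_{\alpha,b}|F-G|$ transfers this to $|v(t,x)-v(t,\hat x)|$ (matching the stated constant once one uses $\Lip(\ell)\le\beta$).

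For the temporal estimate I would use a direct extend-and-restrict argument, rather than the dynamic programming principle (which would introduce a spurious cross term of order $t\Lip(\ell)\|f\|_\infty$). Given $0\le t<\hat t \le T$, any $\alpha\in\Gamma(\hat t)$ restricts to some $\tilde\alpha\in\Gamma(t)$ by setting $\tilde\alpha[b'](s)\coloneqq\alpha[\tilde b](s)$ for $s\in[0,t]$, where $\tilde b\in\BB(\hat t)$ is any fixed measurable extension of $b'\in\BB(t)$ (nonanticipation guarantees independence of the choice); conversely any $\tilde\alpha\in\Gamma(t)$ extends to $\alpha\in\Gamma(\hat t)$ by appending an arbitrary constant action on $[t,\hat t]$. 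Since dynamics and controls agree on $[0,t]$, a matched pair of Bolza functionals differs exactly by $\int_t^{\hat t}\ell\,ds + \bigl(g(y_x(\hat t))-g(y_x(t))\bigr)$, whose absolute value is at most $(\|\ell\|_\infty + \|f\|_\infty\Lip(g))|\hat t - t|$ in view of $|y_x(\hat t)-y_x(t)|\le \|f\|_\infty|\hat t - t|$. Pushing this through $\sup$-$\inf$ in both directions yields the asserted temporal Lipschitz bound with the sharp constant.

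For part (ii), I would approximate $g\in\BUC(\M)$ uniformly by a sequence $g_n\in\Lip(\M)\cap\BUC(\M)$, e.g.\ via convolution with a standard mollifier. Denoting the corresponding value functions by $v_n$, part (i) gives $v_n\in\BUC(\cTcyl)$ and the $v_n$ are uniformly bounded. Since the running cost in \eqref{def app value function} does not depend on the initial datum, for every $(t,x)$ and every $(\alpha,b)$ the two Bolza functionals differ by $g_n(y_x(t))-g(y_x(t))$, hence $\|v_n-v\|_{L^\infty(\cTcyl)}\le\|g_n-g\|_\infty\to 0$. A uniform limit of bounded uniformly continuous functions is itself bounded and uniformly continuous, so $v\in\BUC(\cTcyl)$.

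The main obstacle I anticipate is the bookkeeping in the temporal estimate: one must verify that the extend-and-restrict procedure preserves nonanticipation on $[0,t]$ (which is the reason for defining the restriction via an arbitrary extension $\tilde b$ and the nonanticipating property of $\alpha$), and one must carry out the $\sup$-$\inf$ comparison between $\Gamma(\hat t)\times\BB(\hat t)$ and $\Gamma(t)\times\BB(t)$ in the correct order for each of the two inequalities to avoid introducing additional error terms. Once this is handled, the remaining estimates are routine.
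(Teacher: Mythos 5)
Your proof is correct. For part (ii) it follows essentially the same route as the paper: approximate $g$ uniformly by Lipschitz functions, observe that the value function is $1$-Lipschitz in the terminal datum in the $L^\infty$-norm, and pass to the uniform limit.

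For part (i), however, your approach is genuinely different from the paper's: the paper disposes of (i) in one line by invoking \cite[Theorem 3.2]{EvSo84} after a time-reversal substitution $v(t,x)=-V(T-t,x)$, whereas you reprove the Lipschitz estimates directly from the Bolza functional. Your argument is more self-contained and makes transparent the role of the state-independence of $f$ (which makes trajectories from different starting points exact translates of one another), as well as the care required to compare strategy classes $\Gamma(t)$ and $\Gamma(\hat t)$: your extend-and-restrict construction correctly identifies nonanticipation as the reason the restricted strategy $\tilde\alpha[b']\coloneqq\alpha[\tilde b]|_{[0,t]}$ does not depend on the extension $\tilde b$ of $b'$, and the two-sided $\sup$-$\inf$ comparison is carried out in the correct order. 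Your observation that the extend-and-restrict route yields the clean temporal constant $\|\ell\|_\infty+\|f\|_\infty\Lip(g)$, while the DPP would add a cross term, is a genuine improvement in transparency over a naive argument.

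Two small remarks on constants. First, your spatial bound $(t\Lip(\ell)+\Lip(g))|x-\hat x|$ is the correct one; the $|\theta|$ appearing in the displayed estimate of the statement has no role in the deterministic Appendix A.3 (there $v$ is defined from $\ell$ and $g$ alone, with no $\theta$-shift), and is presumably carried over from the way the proposition is applied in the body of the paper. Second, the inequality $\Lip(\ell)\le\beta$ that you invoke to "match the stated constant" is not among the hypotheses of Appendix A.3: $\beta$ is the constant of Definition~\ref{def:Ham} while $\Lip(\ell)$ comes from ($\ell_2$), and neither is assumed to dominate the other. Your bound is still correct and still establishes $v\in W^{1,\infty}(\cTcyl)$, which is the content of the assertion; you should simply state the bound with the constant you actually obtain, $t\Lip(\ell)+\Lip(g)$, rather than identify it with $\beta t+|\theta|+\Lip(g)$.
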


\begin{proof}
{\bluee 
The first part of item (i) follows directly from \cite[Theorem 3.2]{EvSo84} after observing that $v(t,x)=-V(T-t,x)$, where $V$ is the function given by (2.6) in  \cite{EvSo84} with $Z \coloneqq A$,  $Y \coloneqq B$, and  $-\ell$ and $-g$ in place of $\ell$ and $g$, respectively. The first inequality in (i) can be easily deduced from \eqref{def app value function} in view of the uniform bounds on $\ell$ and $g$. To derive the second inequality, we have to prove the same kind of Lipschitz bounds for the function $V$. This can be achieved by arguing as in \cite[Proof of Theorem 3.2]{EvSo84}. We sketch the proof here and refer to \cite{EvSo84} for the details. 

Let us consider the lower value 
\[
V(t,x) \coloneqq 
\inf_{\alpha}\sup_{b} \left\{
		\int_t^T -\ell(y_x(s),\alpha[b](s),b(s))\,ds - g(y_x(T)) 
	\right\},
\]
for the dynamics 
\[
\dot y_x(s)=f\!\big(\alpha[b](s),b(s)\big)\quad\hbox{in $[t,T]$},\qquad y_x(t)=x,
\]
with \(b\) and \(\alpha[b]\) taken from the corresponding admissible control classes of Players~2 and~1.\smallskip 

\paragraph{\underline{Lipschitz continuity in $x$ (time $t$ fixed)}}
Fix $t\in (0,T)$ and $x_1,x_2\in\M$. Run the {same} controls $(\alpha,\beta)$ after time $t$ for both initial conditions.
Since $f$ does not depend on $x$, we have 
\[
|y_1(s)-y_2(s)|=|x_1-x_2|\qquad\FORALL s\in[t,T].
\]
This replaces the Grönwall inequality used between (3.16)--(3.17) in \cite{EvSo84}.
Proceeding as in (3.17)--(3.20) in \cite{EvSo84}, the running and terminal parts satisfy
\begin{flalign*}
\bigg|\int_t^T \!\ell\big(\alpha(s),\beta(s),y_1(s)\big)-\ell\big(\alpha(s),\beta(s),y_2(s)\big)\,ds\bigg|
&\le \Lip(\ell)\,(T-t)\,|x_1-x_2|,\\
|g(y_1(T))-g(y_2(T))|
&\le \Lip(g)\,|x_1-x_2|.
\end{flalign*}
Taking the $\inf_\alpha\,\sup_\beta$ over admissible controls yields
\[
|V(t,x_1)-V(t,x_2)|\le \Big(\Lip(\ell)\,(T-t)+\Lip(g)\Big)\,|x_1-x_2|,
\]
and hence $\sup_{t\in[0,T]}\,\Lip_x V(t,\cdot)\le T\,\Lip(\ell)+\Lip(g)$.\smallskip

\paragraph{\underline{Lipschitz continuity in $t$ (state $x$ fixed)}}
Fix $t_1<t_2$ in $(0,T)$. As in (3.17) of \cite{EvSo84}, split the payoff difference into:
\begin{itemize}
  \item \emph{Short interval $[t_1,t_2]$:} 
  \(
 \displaystyle \Big|\int_{t_1}^{t_2}\ell\big(\alpha(s),\beta(s),y_{t_1}(s)\big)\,ds\Big|\le \|\ell\|_\infty\,|t_2-t_1|.
  \)\smallskip
  \item \emph{Overlap $[t_2,T]$:} run the same controls after $t_2$. At time $t_2$,
  \[
  y_{t_1}(t_2)-x=\int_{t_1}^{t_2} f(\alpha(s),\beta(s))\,ds,
  \qquad
  \big|y_{t_1}(t_2)-x\big|\le \|f\|_\infty\,|t_2-t_1|.
  \]
  Since $f$ is $x$--independent, this offset is preserved for all $s\ge t_2$, i.e.,
  \[
  |y_{t_1}(s)-y_{t_2}(s)|=\big|y_{t_1}(t_2)-x\big|\le \|f\|_\infty\,|t_2-t_1|,
  \qquad s\in[t_2,T].
  \]
  This replaces the Grönwall growth used between (3.18)–(3.21) in \cite{EvSo84}.\smallskip 
\end{itemize}
  Consequently, exactly as in (3.18)--(3.21) in  \cite{EvSo84},
  \[
  \begin{aligned}
  \bigg|\int_{t_2}^{T}\!\ell\big(s,y_{t_1}(s)\big)-\ell\big(s,y_{t_2}(s)\big)\,ds\bigg|
  &\le \Lip(\ell)\,(T-t_2)\,\|f\|_\infty\,|t_2-t_1|,\\
  |g(y_{t_1}(T))-g(\dot y{t_2}(T))|
  &\le \Lip(g)\,\|f\|_\infty\,|t_2-t_1|.
  \end{aligned}
  \]
Collecting the pieces and passing to the $\inf_\alpha\,\sup_\beta$ over admissible controls gives
\[
|V(t_1,x)-V(t_2,x)|
\le \Big(\|\ell\|_\infty+\|f\|_\infty\big(\Lip(\ell)\,(T-t_2)+\Lip(g)\big)\Big)\;|t_2-t_1|.
\]
In particular,
\[
\sup_{(t,x)\in[0,T]\times\M}\Lip_t V
\le
\|\ell\|_\infty+\|f\|_\infty\big(T\,\Lip(\ell)+\Lip(g)\big).
\]
This yields the Lipschitz bounds for $v$ in $(t,x)$ stated above.\smallskip
}

Let us prove (ii). 
Let $(g_n)_n$ be a sequence of functions in $W^{1,\infty}(\M)$ such that $\|g_n-g\|_{L^\infty(\M)}\to 0$ as $n\to +\infty$.  
For every fixed $(t,x)\in \Tcyl$, any fixed strategy $\alpha\in\Gamma(t)$ and every control $b\in\BB(t)$, let us set 
\[
J[\alpha,b,g](t,x) \coloneqq \int_0^t \ell(y_x(s),\alpha[b](s),b(s))\,ds+ g(y_x(t)),
\] 
where $y_x \colon [0,t]\to\M$ is the solution of \eqref{eq ODE}. We have 
\[
J[\alpha,b,g_n](t,x)-\|g_n-g\|_{\infty}
\leqslant 
J[\alpha,b,g](t,x)
\leqslant
J[\alpha,b,g_n](t,x)+\|g_n-g\|_{\infty}.
\]
By taking the infimum with respect to $b\in\BB(t)$ and then the supremum with respect to $\alpha\in\Gamma(t)$, we infer 
\begin{eqnarray*}
\sup_{\alpha\in\Gamma(t)}\inf_{b\in\BB(t)} J[\alpha,\hat b,g_n](t,x)-\|g_n-g\|_{\infty}
&\leqslant& 
\sup_{\alpha\in\Gamma(t)}\inf_{b\in\BB(t)}  J[\alpha,\hat b,g](t,x)\\
&\leqslant&
\sup_{\alpha\in\Gamma(t)}\inf_{b\in\BB(t)}
J[\alpha,b,g_n](t,x)+\|g_n-g\|_{\infty}.
\end{eqnarray*}
This means that $|v_n(t,x)-v(t,x)|\leqslant \|g_n-g\|_{\infty}$ for all $(t,x)\in\cTcyl$, where $v_n$ and $v$ denote the value function associated to $g_n$ and $g$, respectively.  As a uniform limit of a sequence of equi--bounded Lipschitz functions, we conclude that  $v$ belongs to $\BUC(\cTcyl)$. 
\end{proof}

Via the same {\bluee argument} used in the proof of Proposition~\ref{prop properties value function}-(i), we derive from \cite[Theorem 3.1]{EvSo84} the following fact, known as Dynamic Programming Principle.

\begin{theorem}[Dynamic Programming Principle]
\label{teo dynamic programming principle}
{\bluee Let $g \in\BUC(\M)$. For every fixed $x \in \M$ and $0 < \tau < T$, we have}  
	\begin{equation} \label{eq:DP}
		v(T, x) = \sup_{\alpha\in\Gamma(T - \tau)} \inf_{b\in\BB(T - \tau)} \left\{
			\int_0^{T - \tau} \ell(y_x(s), \alpha[b](s), b(s)) \,ds + v(\tau, y_x(T - \tau)) 
		\right\} \,.
	\end{equation}
\end{theorem}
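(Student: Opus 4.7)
The plan is to reduce the claim to \cite[Theorem 3.1]{EvSo84} via the time-reversal trick already exploited in the proof of Proposition~\ref{prop properties value function}-(i). Associate to $v$ the function $V(s,x) \coloneqq -v(T-s,x)$ on $[0,T]\times\M$. Using the representation \eqref{def app value function} and performing the time shift $r\mapsto r+s$ on each strategy and each control (cost-free since $\ell$ and $f$ are time-independent), one checks that $V$ coincides with the value function Evans and Souganidis associate to the terminal-value problem on $[s,T]$ with running cost $-\ell$ and terminal datum $-g$.

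The DPP for $V$ in \cite[Theorem 3.1]{EvSo84} then reads, for any $0\leqslant t_1\leqslant t_2\leqslant T$,
\[
	V(t_1,x) = \inf_{\alpha\in\Gamma(t_2-t_1)}\sup_{b\in\BB(t_2-t_1)}\left\{
		-\int_0^{t_2-t_1}\ell(y_x(s),\alpha[b](s),b(s))\,ds + V(t_2,y_x(t_2-t_1))
	\right\},
\]
where $y_x$ solves \eqref{eq ODE}. Specializing to $t_1=0$ and $t_2=T-\tau$, using $V(0,\cdot)=-v(T,\cdot)$ and $V(T-\tau,\cdot)=-v(\tau,\cdot)$, and then multiplying through by $-1$ (which converts $\inf\sup\{-F\}$ into $\sup\inf\{F\}$), one obtains exactly the identity \eqref{eq:DP}.

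The only real subtlety is the bookkeeping. One must verify that the class $\Gamma(T-\tau)$ of nonanticipating strategies on $[0,T-\tau]$ and the class $\BB(T-\tau)$ of controls coincide, via translation, with those used on $[\tau,T]$ in \cite{EvSo84}, and that our Hamiltonian $H(x,p)=\max_b\min_a\{-\ell-\langle f,p\rangle\}$ corresponds under the reversal $(t,\ell,g)\mapsto(T-t,-\ell,-g)$ precisely to the max--min Hamiltonian whose value function is covered by \cite[Theorem 3.1]{EvSo84}. Once these conventions are aligned, the DPP for $v$ is a direct symbolic consequence of the DPP for $V$.
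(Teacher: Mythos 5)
Your proposal follows essentially the same route as the paper: the paper derives the DPP in one line by citing \cite[Theorem~3.1]{EvSo84} ``via the same argument used in the proof of Proposition~\ref{prop properties value function}-(i),'' which is exactly the time-reversal and sign flip $V(s,x)=-v(T-s,x)$ that you carry out. Your version merely spells out the translation/sign bookkeeping in more detail, and the argument is correct.
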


The following holds. 
\begin{theorem}
\label{teo appendix representation}
Let $g \in\BUC(\M)$. For every fixed $T>0$, the unique continuous solution  of \eqref{eq app HJP} is given by 
\eqref{def app value function}. 	
\end{theorem}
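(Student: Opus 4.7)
The strategy is to show that the value function $v$ defined by \eqref{def app value function} is a continuous viscosity solution of the Cauchy problem \eqref{eq app HJP}, and then conclude via uniqueness. By Proposition~\ref{prop properties value function}(ii), $v \in \BUC(\cTcyl)$, and the initial condition $v(0,\cdot) = g$ is immediate from \eqref{def app value function}, since at $t = 0$ the integration interval is empty and $y_x(0) = x$. Since Theorem~\ref{teo appendix existence solution} yields a unique continuous solution $u \in \UC(\cTcyl)$ (uniqueness following from the comparison principle of Theorem~\ref{appA: teo comp2}), the proof reduces to verifying that $v$ solves \eqref{appA: eq HJ T} in the viscosity sense in $\Tcyl$.

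I would invoke the classical result from the differential-games literature, making use of the substitution $V(t,x) \coloneqq -v(T - t, x)$ pointed out in the paragraph preceding \eqref{def app value function}: this time-reversal converts $v$ into the value function of the backward differential game in the convention of \cite{EvSo84} with Hamiltonian $\check H(x,p) \coloneqq H(x, -p)$ and terminal datum $V(T, \cdot) = -g$. By \cite[Theorem 4.1]{EvSo84}, $V$ is a viscosity solution of $-\partial_t V + \check H(x, DV) = 0$ with this terminal condition, which, unwinding the substitution, says exactly that $v$ is a viscosity solution of \eqref{eq app HJP}. Equivalently, one may derive the viscosity inequalities directly: given a smooth test $\phi$ and a local extremum $(t_0, x_0) \in \Tcyl$ of $v - \phi$, apply the DPP (Theorem~\ref{teo dynamic programming principle}) with $T = t_0$ and $\tau = t_0 - h$; Taylor-expanding $\phi$ along trajectories and sending $h \to 0^+$ yields the one-sided inequalities, with the Hamiltonian $H$ appearing in the short-time limit.

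The only delicate point in deriving the equation directly is the supersolution inequality: the subsolution side is soft, because the short-time limit produces $\sup_{a} \inf_{b} \leqslant \inf_{b} \sup_{a}$ of the integrand $\ell(x_0,a,b)+\langle D_x\phi(t_0,x_0),f(a,b)\rangle$, and the latter equals $-H(x_0, D_x\phi(t_0, x_0))$, which gives the required inequality. For the supersolution direction one must extract $\inf_{b} \sup_{a}$ exactly, exploiting that $\alpha \in \Gamma(h)$ is a nonanticipating strategy so that Player~1 can react instantaneously to Player~2's control on arbitrarily short time windows. This is the nontrivial content of \cite[Theorem 4.1]{EvSo84}, which is precisely why it is cleaner to invoke that theorem after the time-reversal rather than redo the analysis from scratch.
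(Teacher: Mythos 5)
The overall strategy is right, and it matches the paper's strategy: identify the value function $v$ with the solution by showing $v$ solves the Cauchy problem and appealing to the uniqueness furnished by Theorem~\ref{teo appendix existence solution}. However, there is a genuine gap in the step where you invoke \cite[Theorem 4.1]{EvSo84} directly for $g \in \BUC(\M)$. That theorem is stated under the standing hypotheses of \cite{EvSo84}, which require the terminal datum to be bounded \emph{and Lipschitz}; these Lipschitz bounds propagate through the whole argument (e.g.\ via the a priori Lipschitz estimate on the value function used to run the viscosity-solution tests). So the substitution $V(t,x) \coloneqq -v(T-t,x)$ followed by a direct citation of \cite[Theorem 4.1]{EvSo84} only yields the conclusion when $g$ is Lipschitz, not for general $g \in \BUC(\M)$.

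The paper closes this gap with an approximation argument you did not include: first establish the representation for Lipschitz $g$ exactly as you propose, then approximate a general $g \in \BUC(\M)$ in $L^\infty$ by $g_n \in W^{1,\infty}(\M)$, observe from the formula~\eqref{def app value function} that the corresponding value functions satisfy $\|v_n - v\|_{L^\infty(\cTcyl)} \leqslant \|g_n - g\|_{L^\infty(\M)}$ (the argument already used in the proof of Proposition~\ref{prop properties value function}--(ii)), and conclude by stability of viscosity solutions under uniform convergence. Your alternative route via the DPP and Taylor expansion of the test function would in principle sidestep the Lipschitz requirement on $g$ (the test function is always smooth), but you explicitly defer the delicate supersolution inequality to \cite[Theorem 4.1]{EvSo84}, which brings the Lipschitz hypothesis back in. Either way, the approximation step is needed to pass from $\Lip$ to $\BUC$, and it is missing from your write-up.
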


\begin{proof} 
When $g$ is additionally assumed Lipschitz continuous, the assertion follows 
directly from \cite[Theorem 4.1]{EvSo84} via the same change of variables used in the proof of Proposition~\ref{prop properties value function}-(i) and in view of what remarked at the beginning of this subsection. 
Let us assume $g\in \BUC(\M)$ and pick a sequence of functions $g_n\in W^{1,\infty}(\M)$ such that $\|g_n-g\|_{L^\infty(\M)}\to 0$ as $n\to +\infty$. Let us denote by $v$ and $v_n$ the value functions associated via \eqref{def app value function} to $g$ and $g_n$, respectively. Arguing as in the proof of 
Proposition~\ref{prop properties value function}-(ii), we derive  
$|v_n(t,x)-v(t,x)|\leqslant \|g_n-g\|_{\infty}$ for all $(t,x)\in\cTcyl$. From the previous step we know that $v_n$ solves 
\eqref{eq app HJP} with initial datum $g_n$. By the stability of the notion of viscosity solution, we conclude that $v$ solves \eqref{eq app HJP}. 
\end{proof}
4
We now extend the previous result to the case of initial data that are not necessarily bounded. The result is the following. 

\begin{theorem}\label{teo HJ equation extended}
Let $g \in \UC(\M)$. For every fixed $T>0$, the unique viscosity solution $u\in \CC(\cTcyl)$  of the Cauchy problem \eqref{eq app HJP} is given by the representation formula \eqref{def app value function}. {\bluee Furthermore, $u$ satisfies the Dynamic Programming Principle \eqref{eq:DP}.}
\end{theorem}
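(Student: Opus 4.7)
The plan is to reduce to the $\BUC$ case handled in Theorem~\ref{teo appendix representation} by the same shift trick already used for the existence part of Theorem~\ref{teo appendix existence solution}. Let $u \in \CC(\cTcyl)$ denote the unique viscosity solution of \eqref{eq app HJP}, which exists by Theorem~\ref{teo appendix existence solution}. Pick $\psi \in \CC^{1,1}(\M) \cap \Lip(\M)$ with $\|\psi - g\|_\infty \leqslant 1$; such $\psi$ exists for any $g \in \UC(\M)$ by mollifying $g$ at a scale $\delta$ for which the modulus of continuity of $g$ is at most $1$. Set $\tilde u \coloneqq u - \psi$, $\tilde g \coloneqq g - \psi \in \BUC(\M)$, and $\tilde H(x, p) \coloneqq H(x, p + D\psi(x))$. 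Since $D\psi$ is bounded and Lipschitz, $\tilde H$ inherits (H1)--(H3) from $H$, and $\tilde u$ is a continuous solution of the Cauchy problem $\partial_t \tilde u + \tilde H(x, D_x \tilde u) = 0$ in $\Tcyl$ with $\tilde u(0, \cdot) = \tilde g$; its uniqueness in $\CC(\cTcyl)$ follows from Theorem~\ref{appA: teo comp2}.

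The second step is to put $\tilde H$ into max-min form with a bounded running cost. A direct expansion gives
\[
	\tilde H(x, p) = \max_{b \in B} \min_{a \in A} \left\{ -\tilde\ell(x, a, b) - \langle f(a, b), p \rangle \right\},
\]
where $\tilde\ell(x, a, b) \coloneqq \ell(x, a, b) + \langle f(a, b), D\psi(x) \rangle$. Since $D\psi$ is bounded and Lipschitz and $f$ is bounded on the compact set $A \times B$, $\tilde\ell$ is bounded and satisfies $(\ell_1)$--$(\ell_2)$. Theorem~\ref{teo appendix representation} therefore applies to $\tilde u$ and yields
\[
	\tilde u(t, x) = \sup_{\alpha \in \Gamma(t)} \inf_{b \in \BB(t)} \left\{ \int_0^t \tilde\ell(y_x(s), \alpha[b](s), b(s))\, ds + \tilde g(y_x(t)) \right\}.
\]

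The final step is the chain-rule identity along any trajectory $y_x$ solving \eqref{eq ODE}:
\[
	\int_0^t \langle f(\alpha[b](s), b(s)), D\psi(y_x(s)) \rangle\, ds = \int_0^t \frac{d}{ds} \psi(y_x(s))\, ds = \psi(y_x(t)) - \psi(x),
\]
which is legitimate because $\psi \in \CC^1(\M)$ and $y_x$ is absolutely continuous. Plugging this into the representation of $\tilde u$ and pulling the $(\alpha, b)$-independent constant $-\psi(x)$ out of the $\sup\inf$ gives $\tilde u(t, x) = v(t, x) - \psi(x)$, where $v$ denotes the right-hand side of \eqref{def app value function}. Adding $\psi(x)$ on both sides produces $u = v$, the claimed representation.

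There is no serious obstacle beyond the routine verifications above; the only points requiring a little care are the existence of the regularizer $\psi$ (standard mollification) and the check that $\tilde H$ and $\tilde\ell$ inherit the needed structural conditions, which boils down to $D\psi \in \Lip(\M) \cap L^\infty(\M)$.
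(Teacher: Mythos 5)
Your proof is correct and follows essentially the same approach as the paper's: reduce to the $\BUC$ case by subtracting a $\CC^{1,1}\cap\Lip$ regularizer $\psi$ close to $g$, apply Theorem~\ref{teo appendix representation} to the shifted Hamiltonian $\tilde H(x,p)=H(x,p+D\psi(x))$ with running cost $\tilde\ell=\ell+\langle f, D\psi\rangle$, and undo the shift via the chain rule along the controlled trajectory. The only cosmetic difference is that the paper builds $u$ from $\tilde u$ rather than defining $\tilde u=u-\psi$ as you do; the verifications that $\tilde H$ and $\tilde\ell$ inherit (H1)--(H3) and $(\ell_1)$--$(\ell_2)$ are as you describe.
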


\begin{proof}
Let us pick $\psi\in\CC^{1,1}(\M)\cap \Lip(\M)$  such that $\|\psi-g\|_{\infty}\leqslant 1$. In view of Theorem~\ref{teo appendix representation}, the unique solution $\tilde u(t,x)$ in $\CC(\cTcyl)$ of the Cauchy problem   \eqref{eq app HJP} with $\tilde H(x,p) \coloneqq H(x,D\psi(x)+p)$ and $g-\psi$ in place of $H$ and $g$, respectively,  is given by the formula \eqref{def app value function} with 
$\ell(y_x(s),\alpha[b](s),b(s))+\langle f(\alpha[b](s),b(s)),D\psi(y_x(s))\rangle$ and $(g-\psi)(y_x(t))$ in place of 
{\bluee $\ell(y_x(s),\alpha[b](s),b(s))$ and $g(y_x(t))$, respectively.} 
For every fixed strategy $\alpha\in\Gamma(t)$ and every control $b\in\BB(t)$, we have 
\begin{eqnarray*}
&&\int_0^t \Big( \ell(y_x(s),\alpha[b](s),b(s)) +  \langle f(\alpha[b](s),b(s)),D\psi(y_x(s))\rangle\Big)\,ds
=\\
&&\int_0^t   \left( \ell(y_x(s),\alpha[b](s),b(s)) +  \dfrac{d}{d s} \psi(y_x(s)) 
\right) \,ds
=
\int_0^t   \ell(y_x(s),\alpha[b](s),b(s)) \,ds
+
\psi(y_x(t))-\psi(x).
\end{eqnarray*}
We infer 
\begin{eqnarray*}
\tilde u(t,x)&=&\sup_{\alpha \in \Gamma(t)}\inf_{b \in \BB(t)} \left\{
		\int_0^t \Big( \ell(y_x(s),\alpha[b](s),b(s)) +  \langle f(\alpha[b](s),b(s)),D\psi(y_x(s))\rangle\Big)\,ds  + (g-\psi)(y_x(t)) \right\}\\
		&=& \sup_{\alpha \in \Gamma(t)}\inf_{b \in \BB(t)} \left\{
		\int_0^t \ell(y_x(s),\alpha[b](s),b(s)) \,ds  + g(y_x(t)) \right\}-\psi(x).
\end{eqnarray*}
The first assertion readily follows after observing that the function $u(t,x) \coloneqq \tilde u(t,x)+\psi(x)$ is the continuous solution of the original Cauchly problem \eqref{eq app HJP}.  {\bluee The second assertion can be derived via the same argument from the fact that $\tilde u$ satisfies the Dynamic Programming Principle \eqref{eq:DP}.}
\end{proof}

\section{Proof of Theorem~\ref{teo reduction}}\label{Section: Reduction}

In this section we prove Theorem~\ref{teo reduction}. The result follows from a couple of preliminary propositions of deterministic nature, with $\omega$ treated as a fixed parameter. We will therefore omit it from our notation. 
We start with the following result.

\begin{prop}\label{app prop dense}
Let $H:\TM\to\R$ be a continuous Hamiltonian satisfying conditions (H1)--(H3) for some $\beta>0$.  For every fixed $\theta \in \M$ and $\eps > 0$, let us denote by $\tilde u^\eps_\theta$ the unique continuous solution of equation \eqref{eq eps hj} satisfying $\tilde u^\eps_\theta(0, x) = \langle \theta, x \rangle$ for all $x\in \M$.	Assume there exist a dense subset $D$ of $\M$ and a function  $\overline H \colon D \to \R$ such that, for every $\theta \in D$, the following convergence takes place:
	\begin{equation}\label{app hom dense linear data}
		\tilde u^\eps_\theta(t, x) \ {\ucv} \ \langle \theta, x \rangle - t \overline H(\theta)
		\quad\hbox{in $\cTcyl$ as $\epsilon \to 0^+$.}
	\end{equation}
	The following holds:
\begin{itemize}
\item[\em (i)] $\overline H$ satisfies condition (H1)-(H2) on $D$  with the same $\beta>0$. In particular, it can be uniquely extended by continuity to the whole $\M$;\smallskip
\item[\em (ii)]  the convergence stated in \eqref{app hom dense linear data} holds for any $\theta\in\M$.
\end{itemize}
\end{prop}

\begin{proof}
(i) Let us prove that $\overline H$ satisfies condition (H1) on $D$. To this aim, we fix $\theta\in D$ and remark that the functions $u^+,\,u^-$ defined as 
\[
u^\pm(t,x) \coloneqq \langle \theta, x\rangle \pm \beta\left(1+ |\theta|\right)t,
\qquad
\hbox{$(t,x)\in\cTcyl$},
\]
are, respectively, a continuous super and subsolution of \eqref{eq eps hj} satisfying $u^\pm(0,x)=\langle \theta,x\rangle$, for every $\eps>0$ in view of assumption (H1). 
By Theorem~\ref{appA: teo comp2} we infer $u^-\leqslant \tilde u^\eps_\theta\leqslant u^+$ in $\cTcyl$ for every $\eps>0$. Hence 
\[
|\overline H(\theta)|
=
\lim_{\eps\to 0^+} |\tilde u^\eps_\theta(1,0)|
\leqslant 
\beta\left(1+ |\theta|\right),
\]
as it was to be shown. 
Let us now show that $\overline H$ satisfies condition (H2) on $D$. Fix $\theta_1,\theta_2\in D$ and set 
\[
u_{\theta_2}^{\eps,\pm}(t,x)
 \coloneqq 
\tilde u^\eps_{\theta_2}(t,x) +\langle \theta_1-\theta_2,x\rangle \pm\beta |\theta_2-\theta_1|t,
\qquad
(t,x)\in\cTcyl. 
\]
The function $u^{\eps,+}_{\theta_2}$ and $u^{\eps,-}_{\theta_2}$ are, respectively, a super and a subsolution of \eqref{eq eps hj}, in view of assumption (H2), which satisfy $u^{\eps,\pm}_{\theta_2}(0,x)=\langle \theta_1,x\rangle$. By Theorem~\ref{appA: teo comp2} we derive that 
$u^{\eps,-}_{\theta_2}\leqslant \tilde u^\eps_{\theta_1}\leqslant u^{\eps,+}_{\theta_2}$ in $\cTcyl$, hence 
\[
|\overline H(\theta_1)-\overline H(\theta_2)|
=
\lim_{\eps\to 0^+} |\tilde u^{\eps}_{\theta_1}(1,0)-\tilde u^{\eps}_{\theta_2}(1,0)|
\leqslant
\beta |\theta_2-\theta_1|.
\]
It is clear that such a $\overline H$ can be uniquely extended by continuity to the whole $\M$.\smallskip

{(ii)} For every $\theta\in\M$, let us set $u^\eps_\theta \coloneqq \tilde u^\eps_\theta-\langle \theta, x\rangle$. Then $u^\eps_\theta$ is the solution of \eqref{eq eps hj} with $H(\cdot,\theta+\cdot)$ in place of $H$ and initial datum $u^\eps(0,x)=0$ for all $x\in\M$. Let us fix $\theta\in\M\setminus D$ and choose a sequence $(\theta_n)_n$ in $D$ converging to $\theta$. Let us set 
\[
u^{\eps}_{n,\pm}(t,x) \coloneqq u^\eps_{\theta_n}(t,x)\pm t\beta(|\theta_n-\theta|),\qquad\hbox{$(t,x)\in\cTcyl$}.
\]
In view of assumption (H2), it is easy to check that $u^{\eps}_{n,-}$ and $u^{\eps}_{n,+}$ are, respectively, a sub and a supersolution of 
\eqref{eq eps hj} with $H(\cdot,\theta+\cdot)$ in place of $H$ and zero initial datum.  By comparison, see Theorem~\ref{appA: teo comp2}, we infer 
that $|u^\eps_{\theta}(t,x)-u^\eps_{\theta_n}(t,x)|\leqslant  t\beta(|\theta_n-\theta|)$ for all $(t,x)\in\cTcyl$, hence
\begin{eqnarray*}
\left|\tilde u^\eps_\theta(t,x) -\langle\theta,x\rangle + t\overline H(\theta)\right|
=
\left| u^\eps_\theta(t,x) + t\overline H(\theta)\right|
&\leqslant&
\left| u^\eps_{\theta_n}+ t\overline H(\theta_n)\right|
+
t |\overline H(\theta_n)-\overline H(\theta)| 
+
t\beta(|\theta_n-\theta|)\\
&\leqslant& 
\left| u^\eps_{\theta_n}(t,x)+ t\overline H(\theta_n)\right|
+ 
2T\beta(|\theta_n-\theta|)
\end{eqnarray*} 
for all $(t,x)\in\cTcyl$. The assertion follows by sending first $\eps\to 0^+$ and then $n\to +\infty$. 
\end{proof}

We will also need the following fact.

\begin{prop}\label{app prop hom}
  Let us assume that all the hypotheses of Theorem~\ref{teo reduction}
  are in force. Let $g\in\D{UC}(\R^d)$ and, for every $\eps>0$, let us
  denote by $u^\eps$ the unique function in $\CC(\cTcyl)$ that solves \eqref{eq eps hj}
  subject to the initial condition
  $u^\eps(0,\cdot)=g$. Set
\begin{eqnarray*}
u^*(t,x)& \coloneqq &\lim_{r\to 0}\ \sup\{ u^{\eps}(s,y)\,:\, (s,y)\in (t-r,t+r)\times B_r(x),\ 0<\eps<r\,\},\\
u_*(t,x)& \coloneqq &\lim_{r\to 0}\ \inf\{ u^\eps(s,y)\,:\, (s,y)\in (t-r,t+r)\times B_r(x),\ 0<\eps<r\,\}.
\end{eqnarray*}
Let us assume that $u^*$ and $u_*$ are finite valued. Then\smallskip
\begin{itemize}
\item[(i)] $u^*\in\D{USC}(\cTcyl)$ and it is a viscosity subsolution of \eqref{app effective eq};\medskip
\item[(ii)] $u_*\in\D{LSC}(\cTcyl)$ and it is a viscosity supersolution of \eqref{app effective eq}.
\end{itemize}
\end{prop}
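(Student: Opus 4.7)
}

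The upper and lower semicontinuity of $u^*$ and $u_*$ are immediate from their construction as relaxed half-limits, so only the viscosity (sub/super)solution properties require proof. I focus on (i), as (ii) is symmetric.

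The strategy is the perturbed test function method, with the classical smooth corrector replaced by the convergent family $\tilde u^\eps_\theta$ of solutions with linear initial data: by the standing hypothesis~\eqref{app hom linear data},
\[
	w^\eps(t,x) \coloneqq \tilde u^\eps_\theta(t,x) - \langle \theta, x\rangle + t\,\overline H(\theta) \ucv 0 \quad \text{on compact subsets of } \cTcyl,
\]
for every $\theta \in \M$. Arguing by contradiction, suppose there exist $\phi \in C^1(\Tcyl)$ and $(t_0, x_0) \in \Tcyl$ such that $u^* - \phi$ has a strict local maximum at $(t_0, x_0)$ of value zero, yet $\lambda + \overline H(\theta) = 3\eta > 0$, where $\lambda \coloneqq \partial_t \phi(t_0, x_0)$ and $\theta \coloneqq D_x \phi(t_0, x_0)$. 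After a standard modification by a quadratic bump, I may fix a closed cylinder $Q \coloneqq [t_0 - r, t_0 + r] \times \overline{B_r(x_0)} \subset \Tcyl$ on which $u^* - \phi \leq -\alpha < 0$ outside a small neighborhood of $(t_0, x_0)$. Combining the half-relaxed limsup $u^\eps \to u^*$ with the uniform smallness of $w^\eps$ on $Q$, for $\eps$ small the continuous function $u^\eps - \phi - w^\eps$ attains its maximum on $\overline Q$ at an interior point $(t^\eps, x^\eps)$, and $(t^\eps, x^\eps) \to (t_0, x_0)$ as $\eps \to 0^+$.

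Since $\tilde u^\eps_\theta$ is merely continuous, I turn the informal statement ``$\phi + w^\eps$ is an approximate supersolution of~\eqref{eq eps hj}'' into a rigorous inequality via doubling of variables. For small $\eps, \delta > 0$, consider on $\overline Q \times \overline Q$
\begin{align*}
	\Psi_\delta(t,x,s,y) \coloneqq u^\eps(t,x) - \phi(t,x) - \tilde u^\eps_\theta(s,y) + \langle \theta, y\rangle - s\,\overline H(\theta) - \frac{|x-y|^2 + (t-s)^2}{2\delta}.
\end{align*}
A maximum point $(t_\delta, x_\delta, s_\delta, y_\delta)$ of $\Psi_\delta$ exists and, by standard estimates (see, e.g., \cite[Lemma 3.1]{users}), satisfies $(t_\delta, x_\delta), (s_\delta, y_\delta) \to (t^\eps, x^\eps)$ and $(|x_\delta - y_\delta|^2 + (t_\delta - s_\delta)^2)/\delta \to 0$ as $\delta \to 0^+$. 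Writing the subsolution test inequality for $u^\eps$ at the smooth supertangent induced by $\phi$ and the penalization, and the supersolution test inequality for $\tilde u^\eps_\theta$ at the smooth subtangent induced by the linear term and the penalization, and then subtracting, yields
\[
	\partial_t \phi(t_\delta, x_\delta) + \overline H(\theta) + H\!\left(\tfrac{x_\delta}{\eps},\, D_x \phi(t_\delta, x_\delta) + p_\delta\right) - H\!\left(\tfrac{y_\delta}{\eps},\, \theta + p_\delta\right) \leq 0,
\]
with $p_\delta \coloneqq (x_\delta - y_\delta)/\delta$. Using (H2) and (H3) to estimate the Hamiltonian cross term gives
\[
	\partial_t \phi(t_\delta, x_\delta) + \overline H(\theta) \leq \beta\bigl(|D_x \phi(t_\delta, x_\delta) - \theta| + |x_\delta - y_\delta|/\eps\bigr).
\]

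The conclusion follows by an ordered passage to the limit: first $\delta \to 0^+$ for fixed $\eps$, which drives $|x_\delta - y_\delta|/\eps \to 0$ (because $|x_\delta - y_\delta| \to 0$ for fixed $\eps$) and produces
$\partial_t \phi(t^\eps, x^\eps) + \overline H(\theta) \leq \beta |D_x \phi(t^\eps, x^\eps) - \theta|$;
then $\eps \to 0^+$, using $(t^\eps, x^\eps) \to (t_0, x_0)$ and the continuity of $\partial_t \phi, D_x \phi$, to obtain $\lambda + \overline H(\theta) \leq 0$, contradicting $\lambda + \overline H(\theta) = 3\eta > 0$. The main obstacle is the Hamiltonian cross term $H(x_\delta/\eps, \cdot) - H(y_\delta/\eps, \cdot)$, which a priori scales as $|x_\delta - y_\delta|/\eps$ and would blow up under a careless simultaneous limit: its control is secured precisely by the order $\delta \to 0$ before $\eps \to 0$, since for each fixed $\eps$ the quadratic penalization forces $|x_\delta - y_\delta| \to 0$. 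Part (ii) is proved by the symmetric doubling argument, in which $u^\eps$ is tested as a supersolution against $\tilde u^\eps_\theta$ tested as a subsolution.
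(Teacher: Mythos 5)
Your argument is correct and uses the same high-level strategy as the paper (Evans' perturbed test function method with the corrector $\tilde u^\eps_\theta$), but the implementation of the key comparison step is genuinely different. The paper shows that $\phi^\eps \coloneqq \phi+\tilde u^\eps_\theta-(\langle\theta,x\rangle-t\HV(\theta))$ is itself a viscosity \emph{supersolution} of \eqref{eq eps hj} on a small cylinder $V_r$, by a ``direct computation in the viscosity sense'' (which involves translating test functions of $\phi^\eps$ into test functions of $\tilde u^\eps_\theta$, and absorbing the error $\beta|D_x\phi-\theta|$ into the strict gap $2\delta$), and then invokes a local comparison principle from \cite[Theorem~3.3 and Section~5.C]{users} on $V_r$ before passing to the limit $\eps\to 0^+$ to contradict the strict maximum. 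You instead run the comparison ``by hand'' through a doubling-of-variables argument directly between $u^\eps$ and $\tilde u^\eps_\theta$, derive the inequality
\[
\partial_t\phi(t_\delta,x_\delta)+\HV(\theta)\leqslant\beta\bigl(|D_x\phi(t_\delta,x_\delta)-\theta|+|x_\delta-y_\delta|/\eps\bigr),
\]
and close by the ordered limits $\delta\to0^+$ (which kills $|x_\delta-y_\delta|/\eps$ for fixed $\eps$) then $\eps\to0^+$ (which kills $|D_x\phi-\theta|$ by continuity of $D_x\phi$ and $(t^\eps,x^\eps)\to(t_0,x_0)$). Both routes are standard; yours has the advantage of avoiding the somewhat delicate intermediate claim that the sum of a $C^1$ function and a merely continuous viscosity solution is a supersolution, and of not needing to cite a local comparison theorem, at the cost of carrying the cross Hamiltonian term $H(x_\delta/\eps,\cdot)-H(y_\delta/\eps,\cdot)$ explicitly and justifying the order of limits. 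One small point worth flagging: when you invoke \cite[Lemma~3.1]{users} to send $(t_\delta,x_\delta,s_\delta,y_\delta)\to(t^\eps,x^\eps,t^\eps,x^\eps)$, you should note that the limit is a maximum point of $u^\eps-\phi-w^\eps$ on $\overline Q$, possibly along a subsequence, and then relabel; this does not affect the argument since every such maximum point converges to $(t_0,x_0)$ as $\eps\to0^+$.
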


\begin{proof}
	The fact that $u^*$ and $u_*$ are upper and lower semicontinuous on $\cTcyl$ is an immediate consequence of their definition. 
	Let us prove (i), i.e., that $u^*$ is a subsolution of \eqref{app effective eq}. 
	The proof of (ii) is analogous.

	We make use of Evans's perturbed test function method, see~\cite{Evans89}. 
	Let us assume, by contradiction, that $u^*$ is not a subsolution of \eqref{app effective eq}. Then there exists a function $\phi\in\D{C}^1(\cTcyl)$ that is a strict supertangent of $u^*$ at some point $(t_0,x_0)\in\cTcyl$ and for which the subsolution test fails, i.e.,
	\begin{equation}\label{app test fails}
		\partial_t\phi(t_0,x_0)+\overline H(D_x\phi(t_0,x_0))>2\delta
	\end{equation}
	for some $\delta>0$. For $r>0$ define $V_r \coloneqq (t_0-r,t_0+r)\times B_r(x_0)$. Choose $r_0>0$ to be small enough so that $V_{r_0}$ is compactly contained in $\cTcyl$ and $u^*-\phi$ attains a strict local maximum at $(t_0,x_0)$ in $V_{r_0}$. In particular, we have for every $r \in (0, r_0)$
	\begin{equation}\label{app strict supertangent}
		\max_{\partial V_r} (u^*-\phi)<\max_{\overline V_r} (u^*-\phi)=(u^*-\phi)(t_0,x_0).
	\end{equation}
	Let us set $\theta \coloneqq D_x\phi(t_0,x_0)$ and for every $\eps>0$ denote by $\tilde u^\eps_\theta$ the unique continuous function in $\cTcyl$ that solves \eqref{eq eps hj} subject to the initial condition $\tilde u^\eps_\theta(0,x)=\langle\theta,x\rangle$. We claim that {there is an} $r\in(0,r_0)$ such that the function
	\[
		\phi^\eps(t,x) \coloneqq \phi(t,x)+\tilde u_\theta^\eps(t,x)-\left(\langle\theta,x\rangle-t\overline H(\theta)\right)
	\]
	is a supersolution of \eqref{eq eps hj} in $V_r$ for every $\eps>0$ small enough. Indeed, by a direct computation we first get 
	\begin{eqnarray}\label{app ineq1}
		\partial_t\phi^\eps+H\left(\frac{x}{\eps},D_x\phi^\eps\right)
			=
			\partial_t\phi+\overline H(\theta)
			+ \partial_t \tilde u^\eps_\theta+H\left(\frac{x}{\eps},D_x\tilde u^\eps_\theta+D_x\phi-\theta\right)
	\end{eqnarray}
	in the viscosity sense in $V_r$. 
	Using \eqref{app test fails}, the continuity of $\overline H$ and the fact that $\phi$ is of class $C^1$, we get that there is an $r\in (0,r_0)$ such that for all sufficiently small $\eps > 0$ and all $(t,x)\in V_r$
	\[
		\partial_t\phi(t, x) + \overline{H}(\theta) > 2 \delta \,.
	\]
	Moreover, by taking into account (H2), we can further reduce $r$ if necessary to get 
	\[
		H\left(\frac{x}{\eps},D\tilde u^\eps_\theta+D_x\phi-\theta\right)
		>
		H\left(\frac{x}{\eps},D\tilde u^\eps_\theta\right)-\delta\qquad\hbox{in $V_r$}
	\]
	in the viscosity sense. Plugging these relations into \eqref{app ineq1} and using the fact that $\tilde u^\eps_\theta$ is a solution of \eqref{eq eps hj}, we finally get 
	\begin{eqnarray*}
		\partial_t\phi^\eps+H\left(\frac{x}{\eps},D_x\phi^\eps\right)
			>
			\delta+\partial_t \tilde u^\eps_\theta+H\left(\frac{x}{\eps},D\tilde u^\eps_\theta\right)
			=
			\delta
			>
			0
	\end{eqnarray*}
	in the viscosity sense in $V_r$, thus showing that $\phi^\eps$ is a supersolution of \eqref{eq eps hj} in $V_r$. 
	Now we need a comparison principle for equation \eqref{eq eps hj} in $V_r$ applied to $\phi^\eps$ and $u^\eps$ to infer that 
	\[
		\sup_{ V_r} (u^\eps-\phi^\eps){\leqslant}\max_{\partial V_r} (u^\eps-\phi^\eps).
	\]
	Since condition (H2) is in force, the validity of this comparison principle is guaranteed by \cite[Theorem 3.3 and Section 5.C]{users}. 
	Now notice that, by the assumption \eqref{app hom linear data}, $\phi^\eps\ucv\phi$ in $\overline V_r$. 
	Taking the limsup of the above inequality as $\eps\to 0^+$ we obtain
	\[
		\sup_{V_r} (u^*-\phi)
		\leqslant 
		\limsup_{\eps\to 0^+} \sup_{ V_r} (u^\eps-\phi^\eps)
		\leqslant
		\limsup_{\eps\to 0^+}
		\max_{\partial V_r} (u^\eps-\phi^\eps)
		\leqslant
		\max_{\partial V_r} (u^*-\phi),
	\]
	in contradiction with \eqref{app strict supertangent}. 
	This proves that $u^*$ is a subsolution of \eqref{app effective eq}. 
\end{proof}

We are now in position to prove Theorem~\ref{teo reduction}.

\begin{proof}[Proof of Theorem~\ref{teo reduction}]
The fact that $\overline H$ satisfies (H1)-(H2) directly follows from Proposition \ref{app prop dense} by taking $D=\M$.  
We now proceed to prove the second part of the assertion. Let us take a dense and countable subset $D \coloneqq (\theta_n)_n$ of $\M$ and set $\hat\Omega \coloneqq \bigcap_n \Omega_{\theta_n}$. Let us fix $\omega\in\hat\Omega$. According to Proposition \ref{app prop dense}, the convergence in \eqref{app hom dense linear data} holds for every $\theta\in \M$. 
We are going to show that, for any such fixed $\omega\in\hat\Omega$, the solutions 
$u^\eps(\cdot,\cdot,\omega)$ to \eqref{eq eps hj} with initial datum $u^\eps(0,\cdot,\omega)=g$ in $\M$ converge to the solution $\overline u$ of \eqref{app effective eq} with same initial datum, for any $g\in\D{UC}(\M)$. Since $\omega$ will remain fixed throughout the proof, we will omit it from our notation. 

Let us first assume $g\in {\D{C}^1(\R^d)\cap\Lip}(\R^d)$. Take a constant $M$ large enough so that 
\[
M>  \|H(x,D g(x))\|_\infty.
\]
Then the functions $u_-(t,x) \coloneqq g(x)-Mt$ and $u_+(t,x) \coloneqq g(x)+Mt$ are,
respectively, a Lipschitz continuous sub and supersolution of
\eqref{eq eps hj} for every $\eps>0$. By the Comparison Principle stated in Theorem~\ref{appA: teo comp2}, we get $u_-\leqslant u^\eps \leqslant u_+$ in $\cTcyl$ for
every $\eps>0$. By the definition of relaxed semilimits we
infer
\[u_-(t,x)\leqslant u_*(t,x)\leqslant u^*(t,x)\leqslant u_+(t,x) \quad
\text{for all $(t,x)\in\cTcyl$},\]
in particular, $u_*$, $u^*$ satisfy
$u_*(0,\cdot)=u^*(0,\cdot)=g$ on $\R^d$.  By Proposition~\ref{app prop
  hom}, we know that $u^*$ and $u_*$ are, respectively, an upper
semicontinuous subsolution and a lower semicontinuous supersolution of
the effective equation \eqref{app effective eq}. We can therefore
apply Theorem~\ref{appA: teo comp2} again 
 to obtain $u^*\leqslant u_*$ on
$\cTcyl$. Since the opposite inequality holds by the definition of upper and
lower relaxed semilimits, we conclude that the function
\[
\overline u(t,x) \coloneqq u_*(t,x)=u^*(t,x)\qquad\hbox{for all $(t,x)\in\cTcyl$}
\]
is the unique continuous viscosity solution of \eqref{app effective
  eq} such that $\overline u(0,\cdot)=g$ on
$\R^d$. Furthermore, by Theorem~\ref{teo HJ equation extended}, we also know that $\overline u$ belongs to $\UC(\cTcyl)$. The fact that the relaxed
semilimits coincide implies that  $u^\eps$ converge locally
uniformly in $\cTcyl$ to $\overline u$, see for instance \cite[Lemma
6.2, p. 80]{ABIL}.

When the initial datum $g$ is just uniformly continuous on $\cTcyl$, the
result easily follows from the above by
approximating $g$ with a sequence $(g_n)_n$ of initial data belonging to
${\D{C}^1(\R^d)\cap\Lip}(\R^d)$. Indeed, if we denote by $u^\eps_n$ and $u^\eps$ the solution of \eqref{eq eps hj} with initial datum $g_n$ and $g$, respectively, we have, in view of Theorem~\ref{appA: teo comp2}, 
\[
\|u^\eps_n-u^\eps\|_{L^{\infty}(\cTcyl)}
\leqslant 
\|g_n-g\|_{L^\infty(\M)}
\qquad
\hbox{for every $\eps>0$.}
\]
The assertion follows from this by the stability of the notion of viscosity solution. 
\end{proof}

\bibliography{references}
\bibliographystyle{siam}
\end{document}